\newcommand{\K}{\ensuremath{\mathbb{K}}}
\newcommand{\C}{\ensuremath{\mathbb{C}}}
\newcommand{\R}{\ensuremath{\mathbb{R}}}
\newcommand{\N}{\ensuremath{\mathbb{N}}}
\newcommand{\Q}{\ensuremath{\mathbb{Q}}}
\newcommand{\Z}{\ensuremath{\mathbb{Z}}}
\newcommand{\CP}{\ensuremath{\mathbb{CP}}}
\newcommand{\HP}{\ensuremath{\mathbb{HP}}}
\newcommand{\coh}{\ensuremath{\mathrm{H}}}
\newcommand{\SU}{\ensuremath{\mathrm{SU}}}
\newcommand{\T}{\ensuremath{\mathrm{T}}}
\newcommand{\Sph}{\ensuremath{\mathrm{S}}}
\newcommand{\betti}{\ensuremath{\mathrm{b}}}
\newcommand{\isom}{\cong}
\newcommand{\tensor}{\ensuremath{\otimes}}
\DeclareMathOperator{\Kern}{ker}
\DeclareMathOperator{\Image}{im}
\DeclareMathOperator{\rank}{rk}
\newcommand{\avoidbreak}{\postdisplaypenalty=1000}
\newtheoremstyle{thmstyle}
{0.6cm}
{0.6cm}
{\itshape}
{}
{\bfseries}
{.}
{0.5em}
{}
\newtheoremstyle{remstyle}
{0.6cm}
{0.6cm}
{}
{}
{\bfseries}
{.}
{0.5em}
{}
\theoremstyle{thmstyle}
\newtheorem{thm}{Theorem}[section]
\newtheorem{prop}[thm]{Proposition}
\newtheorem{cor}[thm]{Corollary}
\newtheorem{fact}[thm]{Fact}
\newtheorem{lem}[thm]{Lemma}
\newtheorem*{defi*}{Definition}
\theoremstyle{remstyle}
\newtheorem{rem}[thm]{Remark}
\newcommand{\mytableextraspace}{\addlinespace[.3em]}
\title[rationally elliptic manifolds in low dimensions]{Classification and characterization of\\ rationally elliptic manifolds in low dimensions}
\author{Martin Herrmann}
\address{Martin Herrmann \\Fakultät für Mathematik \\Karlsruher Institut für Technologie \\Kaiserstraße 89--93 \\76133 Karlsruhe, Germany}
\email{martin.herrmann@kit.edu}
\subjclass[2010]{Primary 55P62; Secondary 57R19}
\keywords{rationally elliptic spaces, rationally elliptic manifolds}
\begin{document}
\begin{abstract}
We give a characterization of closed, simply connected, rationally elliptic 6--manifolds in terms of their rational cohomology rings and a partial classification of their real cohomology rings. We classify rational, real and complex homotopy types of closed, simply connected, rationally elliptic 7--manifolds. We give partial results in dimensions 8 and 9.
\end{abstract}

\maketitle

\section{Introduction}

A closed, simply connected manifold $M$ is called rationally elliptic if 
\[\dim\pi_*(M)\tensor \Q=\sum_{k\geq 2}\dim \pi_k(M)\tensor \Q< \infty.\]
For a simply connected  space $X$ we additionally require that the rational cohomology of  $X$ satisfies $\sum_{k\geq0}\dim\coh^k(X;\Q)<\infty$. The definition can be generalized to nilpotent spaces.
 
The importance of rationally elliptic manifolds for Riemannian geometry mainly stems from the conjecture, attributed to Bott, that a closed, simply connected manifold of (almost) nonnegative sectional curvature is rationally elliptic (see \cite{GH82}).

A positive answer to this conjecture would, for example, imply Gromov's conjecture that the bound for the sum of the Betti numbers of a nonnegatively curved $n$-manifold is bounded by $2^n$, see \cite{FrH} and \cite{Pavlov} for an improved estimate for simply connected spaces.

Rationally elliptic spaces have some nice properties. For example, by the work of Halperin \cite{Halperin77}  the rational cohomology ring $\coh^*(X;\Q)$ of a rationally elliptic space $X$  satisfies Poincaré duality  and the sequence of the Betti numbers of the loop space $\Omega X$ grows polynomially, i.e. $\sum_{i=0}^k\betti_k(\Omega X)\leq k^m$ for some integer $m$, while for a rationally hyperbolic space it grows exponentially (see \cite[Proposition 33.9]{FHT}).

Examples of rationally elliptic manifolds include homogeneous spaces and biquotients of compact Lie groups (by a theorem of Hopf) and co\-homo\-geneity one manifolds (see \cite{GroveHalperin}). Furthermore, if $F\to E \to B$ is a fibre bundle where $E$, $F$ and $B$ are manifolds, then if two of these spaces are rationally elliptic and the third is nilpotent, then the third space is rationally elliptic by the associated exact homotopy sequence.

The classification of closed, simply connected, rationally elliptic manifolds of dimension five or less is  known:
\begin{fact}
A closed, simply connected, rationally elliptic manifold of dimension five or less is
\begin{itemize}
\item diffeomorphic to $\Sph^2$ or $\Sph^3$,
\item homeomorphic to $\Sph^4$, $\Sph^2 \times \Sph^2$, $\CP^2$, $\CP^2 \# \CP^2$ or $\CP^2 \# \overline{\CP}^2$, or
\item rationally homotopy equivalent to $\Sph^5$ or $\Sph^2\times \Sph^3$.
\end{itemize}
\end{fact}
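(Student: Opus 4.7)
The strategy is to handle each dimension separately, combining the constraints that rational ellipticity places on the rational cohomology ring (via Halperin's theorem on Poincaré duality and the Friedlander--Halperin bounds) with the appropriate topological classification of closed, simply connected manifolds in each dimension. In dimension $2$ the classification of surfaces leaves only $\Sph^2$. In dimension $3$, Perelman's proof of the Poincaré conjecture implies that every closed, simply connected $3$-manifold is diffeomorphic to $\Sph^3$.

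In dimension $4$ I would first enumerate all possible rational cohomology rings of a closed, simply connected, rationally elliptic $4$-manifold $M$. Simple-connectivity and Poincaré duality force $\betti_1(M) = \betti_3(M) = 0$, so $\chi(M) = 2 + \betti_2(M) > 0$. Halperin's theorem for elliptic spaces with positive Euler characteristic ensures that the rational cohomology is concentrated in even degrees and that the numbers of even- and odd-degree generators in the minimal Sullivan model agree; combined with the nondegenerate intersection pairing $\coh^2(M;\Q) \tensor \coh^2(M;\Q) \to \coh^4(M;\Q) \isom \Q$, this should restrict $\betti_2$ to $\{0,1,2\}$ (for $\betti_2\geq 3$ one produces $\binom{\betti_2+1}{2}-1$ degree-$3$ generators needed to kill non-Poincaré-dual products in degree $4$, unbalancing the even/odd count), leaving precisely the cohomology rings of $\Sph^4$, $\CP^2$, and the two rank-$2$ possibilities distinguished by the signature of the intersection form. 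Freedman's classification of closed, simply connected topological $4$-manifolds by their integral intersection form (the Kirby--Siebenmann invariant vanishes on smooth manifolds) then produces the five listed homeomorphism types.

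In dimension $5$, $M$ is odd-dimensional so $\chi(M) = 0$, and Poincaré duality with $\betti_1 = \betti_4 = 0$ gives $\betti_2 = \betti_3$. The vanishing $\coh^4(M;\Q) = 0$ forces $x^2 = 0$ for every $x \in \coh^2(M;\Q)$, and I would then show via a minimal Sullivan model computation that $\betti_2 \geq 2$ generates, from the degree-$3$ classes needed to trivialize all quadratic products, a cascade of new homotopy generators in every degree, contradicting ellipticity. The remaining cohomology rings are those of $\Sph^5$ and $\Sph^2 \times \Sph^3$, each of which is intrinsically formal and thus determines a unique simply connected rational homotopy type.

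The main technical obstacle will be the minimal Sullivan model analysis used to bound $\betti_2$ from above in dimensions $4$ and $5$, where one must convert the cohomological constraints of Poincaré duality into statements about the growth of rational homotopy; the remaining arguments reduce to classical classification results. A subtle point in dimension $4$ is that $\Sph^2 \times \Sph^2$ and $\CP^2 \# \overline{\CP}^2$ share the same rational cohomology ring (rank $2$, signature $0$) but are distinguished as topological manifolds by the parity of their integer intersection form; this distinction is captured by Freedman's theorem and explains why the statement is phrased in terms of homeomorphism rather than rational equivalence in this dimension.
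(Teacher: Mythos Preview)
Your proposal is correct and follows the route the paper indicates: the paper does not spell out a proof of this Fact but refers to \cite{PatPet03} for dimension~4 and remarks that dimension~5 ``follows easily from the classification of possible exponents'' via the Friedlander--Halperin constraints of Section~\ref{susubsec:Exponents}. Your outline is a reasonable expansion of exactly that strategy, supplemented by the standard topological classifications (surfaces, Poincar\'e conjecture, Freedman).

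One streamlining worth noting: in dimension~4 you do not need the count of degree-3 generators or the identity $q=r$. The inequality $\sum_i 2a_i \le n$ from Section~\ref{susubsec:Exponents} already forces $q\le 2$, hence $\betti_2(M)=\dim V^2\le q\le 2$. Your argument as written implicitly assumes every even-degree generator lies in degree~2 (so that $q=\betti_2$), which happens to be true here but only because this same inequality forces it; invoking the inequality directly closes that loop. Similarly, in dimension~5 the bound $\sum_i 2a_i\le 5$ together with the dimension formula $n=2(\sum_j b_j-\sum_i a_i)-(r-q)$ and $b_j\ge 2$ rules out $\betti_2\ge 2$ in one line, replacing the ``cascade'' heuristic.
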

For the 4--dimensional case see \cite[Lemma 3.2]{PatPet03}. The 5--dimensional case follows easily from the classification of possible exponents in this dimension, which is easily done with the results of Section \ref{susubsec:Exponents}. Note that there are infinitely many integral homotopy types of closed, simply connected, rationally elliptic 5--manifolds, which can be seen from Barden's classification of closed, simply connected 5--manifolds in \cite{Barden}.

Our first theorem gives a characterization of closed, simply connected, rationally elliptic 6--manifolds in terms of their cohomology rings.
\begin{thm}\label{TheoremDimension6Rational}
A closed, simply connected 6--manifold $M$ is rationally elliptic if and only if one of the following holds
\begin{enumerate}[label={\rm(\alph*)}]
\item $\betti_2(M)=\betti_3(M)=0$;\label{b2=0b3=0}
\item $\betti_2(M)=0$ and $\betti_3(M)=2$;\label{b2=0b3=2}
\item $\betti_2(M)=1$ and $\betti_3(M)=0$;\label{b2=1b3=0}
\item $\betti_2(M)=2$, $\betti_3(M)=0$ and $\coh^*(M;\Q)$ is generated by $\coh^2(M;\Q)$;\label{b2=2b3=0}
\item $\betti_2(M)=3$, $\betti_3(M)=0$, $\coh^*(M;\Q)$ is generated by $\coh^2(M;\Q)$ and there is a basis $x_1,x_2,x_3$ of $\coh^2(M;\Q)$, such that the kernel of the restriction of the homomorphism  $\Q[\tilde{x}_1, \tilde{x}_2, \tilde{x}_3] \to \coh^*(M;\Q)$ with $\tilde{x}_i \mapsto x_i$ to homogeneous polynomials of degree two has a regular sequence as a basis.\label{b2=3b3=0}
\end{enumerate}
\end{thm}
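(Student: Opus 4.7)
Poincar\'e duality and simple connectivity force the rational Betti sequence of $M$ to be $(1,0,\betti_2,\betti_3,\betti_2,0,1)$ with $\betti_3$ even and $\chi(M)=2+2\betti_2-\betti_3$. The argument splits into the two implications and rests on the classical dichotomy for a simply connected rationally elliptic space $X$: $\chi(X)>0$ holds if and only if $\coh^{\mathrm{odd}}(X;\Q)=0$, in which case $X$ is an $F_0$--space and admits a pure minimal Sullivan model $(\Lambda(x_1,\dots,x_n)\tensor\Lambda(y_1,\dots,y_n),d)$ with $|x_i|$ even, $|y_j|$ odd, $dx_i=0$, and $dy_1,\dots,dy_n$ a regular sequence in $\Q[x_1,\dots,x_n]$; the formal dimension of such a model equals $\sum|y_j|-\sum(|x_i|-1)$.

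\textbf{Necessity.} Assume $M$ is rationally elliptic. If $\betti_3=0$, then $M$ is $F_0$ and the task is to classify pure minimal models of formal dimension~$6$. Writing $|x_i|=2a_i$, $|y_j|=2b_j-1$, the formal--dimension identity becomes $\sum b_j-\sum a_i=3$ with $a_i\geq 1$, $b_j\geq 2$. Minimality, combined with the requirement that $dy_1,\dots,dy_n\in\Q[x_1,\dots,x_n]$ form a regular sequence, restricts $n$ to $\{1,2,3\}$ and leaves only finitely many degree patterns, which I enumerate by hand. The resulting cohomology rings are precisely those of~(a), (c), (d), (e); the generation by $\coh^2(M;\Q)$ in (d) and (e) reflects that all $x_i$ are forced into degree~$2$, while the regular--sequence condition in~(e) is exactly the hypothesis $dy_1,dy_2,dy_3\in\Q[x_1,x_2,x_3]$ regular. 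If $\betti_3>0$, then $\chi(M)=0$, so $\betti_3=2+2\betti_2$, and the minimal model carries a generator in $V^3$. The vanishing of $\coh^5(M;\Q)$ implies $\coh^2(M;\Q)\cdot \coh^3(M;\Q)=0$, and a short analysis of the low--degree part of the minimal model using Halperin's inequality $\dim(\pi_{\mathrm{even}}(M)\tensor\Q)\leq\dim(\pi_{\mathrm{odd}}(M)\tensor\Q)$ and the formal--dimension identity forces $\betti_2=0$, hence $\betti_3=2$, which is case~(b).

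\textbf{Sufficiency.} In each case I realise the prescribed cohomology ring as the cohomology of an elliptic minimal Sullivan algebra: $(\Lambda(y,z),\,dz=y^2)$ with $|y|=6$, $|z|=11$ for~(a); $(\Lambda(a,b),\,0)$ with $|a|=|b|=3$ for~(b); either $(\Lambda(x,z),\,dz=x^4)$ with $|x|=2$, $|z|=7$, or $(\Lambda(x_1,x_2,y_1,y_2),\,dy_1=x_1^2,\,dy_2=x_2^2)$ with $|x_1|=2,|x_2|=4,|y_1|=3,|y_2|=7$ for~(c); and pure models on two respectively three degree--$2$ even generators whose odd differentials realise the degree--$4$ (and, in~(d), degree--$6$) relations of the prescribed cohomology ring for~(d) and~(e). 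Under the Betti constraints listed in (a)--(e), every potential Massey product of $M$ has target in a vanishing rational cohomology group, so $M$ is formal; its minimal Sullivan model therefore coincides with the elliptic model just constructed, and $M$ is rationally elliptic.

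\textbf{Main obstacle.} The heart of the argument is the algebraic case analysis in the necessity direction: ruling out every degree pattern beyond those listed, and in particular showing that the regular--sequence condition in~(e) is a genuine additional hypothesis rather than a consequence of generation by $\coh^2(M;\Q)$ and Poincar\'e duality (which, for $\betti_2=2$, do force regularity automatically, but not for $\betti_2=3$). A secondary subtlety on the sufficiency side is that the cohomology ring alone must force rational ellipticity; this is supplied by the dimensional vanishing of Massey--product targets in the listed Betti profiles, an argument that reflects the low dimension and would not survive to dimension $\geq 7$.
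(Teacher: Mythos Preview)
Your overall strategy---classify degree patterns, build the corresponding pure models, and invoke formality to identify the minimal model of $M$ with the pure model of its cohomology ring---is the paper's approach. The genuine gap is in your justification of formality. You claim that under the Betti profiles of (a)--(e) every Massey product lands in a vanishing cohomology group, but this already fails in case~(d): a quadruple product $\langle a,b,c,d\rangle$ of degree-$2$ classes lives in degree $4\cdot 2-2=6$, and $\coh^6(M;\Q)\cong\Q$. Such products are genuinely defined whenever the unique quadratic relation factors (e.g.\ if $x_1x_2=0$ in $\coh^4$, take $a,c\in\langle x_1\rangle$, $b,d\in\langle x_2\rangle$). Even granting vanishing of all Massey products, that alone does not imply formality. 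The paper bypasses this by citing Miller's theorem that closed, simply connected manifolds of dimension $\leq 6$ are formal. Note that you need formality in the necessity direction as well: an $F_0$ space need not have a \emph{pure} minimal model in general, and it is formality that identifies the minimal model of $M$ with the pure minimal model of the complete intersection $\coh^*(M;\Q)$.

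Two places where your sketch is thinner than the paper. The ``short analysis'' forcing $\betti_2=0$ when $\betti_3>0$ is not spelled out; the paper gets this from Pavlov's explicit list of the six admissible exponent pairs in dimension~$6$, in which the unique pattern with a closed degree-$3$ generator is $a=(\,),\ b=(2,2)$. And your assertion that for $\betti_2=2$ Poincar\'e duality plus generation by $\coh^2$ automatically force the two relations to be a regular sequence is true but nontrivial; the paper proves it via an explicit cubic-form analysis, showing that the only non-regular cases are the cohomology rings of $(\Sph^2\times\Sph^4)\#(\Sph^2\times\Sph^4)$ and $\CP^3\#(\Sph^2\times\Sph^4)$, neither of which is generated in degree~$2$.
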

Note that, in dimension up to six, every closed, simply connected manifold is formal by a theorem of Miller (see \cite{Miller79}), so a classification of rational (or real) cohomology rings is equivalent to a classification of rational (or real) homotopy types. The rational (respectively real) cohomology rings of these manifolds are determined by their third Betti number and a cubic form on the second cohomology group with rational (respectively real) coefficients. In the real case we can give a classification of the real homotopy types for closed, simply connected, rationally elliptic 6--manifolds $M$ with second Betti number  $\betti_2(M)\leq 2$.

\begin{thm}\label{TheoremDimension6Realb2leq2}
A closed, simply connected, rationally elliptic 6--manifold $M$  with $\betti_2(M)\leq 2$ has the real homotopy type of exactly one of the following manifolds:
\[\Sph^6, \Sph^3\times\Sph^3, \CP^3, \Sph^2 \times \Sph^4, \CP^2\times\Sph^2,  \SU(3)/ \T^2\text{ or }\CP^3 \# \CP^3.\]
\end{thm}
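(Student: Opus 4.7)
Since the remark following Theorem~\ref{TheoremDimension6Rational} records that every closed simply connected $6$-manifold is formal (Miller), the classification of real homotopy types reduces to the classification of real cohomology rings. By Theorem~\ref{TheoremDimension6Rational}, a closed simply connected rationally elliptic $M$ with $\betti_2(M)\leq 2$ falls into one of the cases (a)--(d) of that theorem, and the plan is to go through these case-by-case.

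Cases (a) and (b) force the ring to be $\coh^*(\Sph^6;\R)$ and $\coh^*(\Sph^3\times\Sph^3;\R)$, respectively. For case (c), with $\betti_2=1$ and $\betti_3=0$, Poincaré duality gives $\betti_4=1$; fixing a generator $x$ of $\coh^2$ and the dual class $y\in\coh^4$ with $xy$ spanning $\coh^6$, the ring is determined by the single real coefficient $\alpha$ in $x^2=\alpha y$, equivalently by the cubic value $x^3\in\coh^6\cong\R$. A direct computation of the graded-algebra automorphism group shows that the two orbits are $\alpha=0$ (giving $\coh^*(\Sph^2\times\Sph^4;\R)$) and $\alpha\neq 0$ (giving $\coh^*(\CP^3;\R)$), where any nonzero value of $\alpha$ is normalized by rescaling $x$.

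The substantial case is (d): $\dim\coh^2(M;\R)=2$, $\coh^3(M;\R)=0$, and the ring is generated by $V:=\coh^2(M;\R)$. Then $\coh^*(M;\R)$ is an Artinian Gorenstein algebra with socle degree $3$ generated in degree $1$, and Macaulay's inverse system theory recovers it from the cubic form $C(x)=x^3$ on $V\cong\R^2$ as $\R[V^*]/\mathrm{Ann}(C)$. Two cubic forms yield isomorphic graded real algebras iff they lie in the same $\mathrm{GL}_2(\R)$-orbit. The classical classification of binary cubics over $\R$ by their real root structure gives five orbits, with representatives $0$, $x^3$, $x^2y$, $xy(x-y)$, and $x^3+y^3$; the first two are excluded because they have nontrivial radical on $V$ and therefore yield $\betti_2<2$. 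The three remaining orbits are identified with the candidate manifolds by computing the intersection cubic $(t_1e_1+t_2e_2)^3$ directly: in $\R[a,b]/(a^3,b^2)$ for $\CP^2\times\Sph^2$ one obtains $3t_1^2t_2\cdot a^2b$; in the Borel presentation $\R[x_1,x_2]/(x_1^2+x_1x_2+x_2^2,\;x_1x_2(x_1+x_2))$ for $\SU(3)/\T^2$ one obtains $3t_1t_2(t_2-t_1)\cdot x_1x_2^2$ (three distinct real roots); and in the connect-sum presentation $\R[a,b]/(ab,\;a^3-b^3)$ for $\CP^3\#\CP^3$ one obtains $(t_1^3+t_2^3)\cdot a^3$ (one real and two complex-conjugate roots).

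The principal obstacle is the $\mathrm{GL}_2(\R)$-orbit analysis in case (d): one must verify that the three surviving orbits are genuinely distinct over $\R$ (the orbits of $xy(x-y)$ and $x^3+y^3$ would merge over $\C$, so the real invariant distinguishing them — the number of real linear factors, or equivalently the sign of the discriminant of the cubic — must be used), and then carry out the explicit Macaulay identification matching each $\R$-orbit with exactly one of the three candidate algebras.
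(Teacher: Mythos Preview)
Your proof is correct and follows essentially the same route as the paper: reduce to cubic forms via formality, classify binary real cubics, and match the nondegenerate ones to the three candidate manifolds. The differences are cosmetic: the paper cites McKay for the five $\mathrm{GL}_2(\R)$-orbits while you derive them from the real root pattern, and where the paper excludes the forms $0$ and $x^3$ by observing that the corresponding rings (realized by $(\Sph^2\times\Sph^4)^{\#2}$ and $\CP^3\#(\Sph^2\times\Sph^4)$) are not generated by $\coh^2$ and hence fail Theorem~\ref{TheoremDimension6Rational}(d), you phrase the same exclusion via the Macaulay inverse system (a nontrivial radical forces the apolar algebra to have degree-one part of dimension $<2$). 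One small point of phrasing: your sentence ``therefore yield $\betti_2<2$'' could be misread as a statement about the manifold rather than about the Macaulay algebra $\R[V^*]/\mathrm{Ann}(C)$; it would be cleaner to say that a radical vector $v\in\coh^2$ satisfies $v\cdot\coh^2=0$ by Poincar\'e duality, so $\coh^4$ would be at most one-dimensional, contradicting $\betti_4=\betti_2=2$.
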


In the case $\betti_3(M)=3$ we can give a classification of the possible cubic forms.

\begin{thm}\label{TheoremDimension6Realb2eq3}
A closed, simply connected 6--manifold $M$ with $\betti_2(M)=3$  is rationally elliptic, if and only if $\betti_3(M)=0$ and the cubic form associated to $\coh^*(M;\R)$ is equivalent to $x y z$,  $z(x^2+y^2)$, $z(x^2+y^2-z^2)$, $x(x^2+y^2-z^2)$, $x(x^2+y^2+z^2)$, $x^3+3x^2 z-3 y^2 z$, $x^3-3 x^2 z-3y^2 z$ or $x^3+y^3+z^3+6 \sigma x y z$ for $\sigma\neq 0, 1, -\tfrac{1}{2}$.
\end{thm}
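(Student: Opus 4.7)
\emph{The plan.} By Theorem~\ref{TheoremDimension6Rational}(e), a closed, simply connected 6--manifold $M$ with $\betti_2(M)=3$ is rationally elliptic precisely when $\betti_3(M)=0$, $\coh^*(M;\Q)$ is generated by $\coh^2(M;\Q)$, and the quadratic relations in the resulting presentation form a regular sequence. This condition is preserved under the extension of scalars from $\Q$ to $\R$, so the problem becomes one about real cubic forms. Setting $V:=\coh^2(M;\R)$ and letting $F\in S^3 V^*$ be the cubic form $\alpha\mapsto(\alpha^3)[M]$, Poincaré duality together with the generation hypothesis identifies $\coh^*(M;\R)\isom\R[x_1,x_2,x_3]/\mathrm{Ann}(F)$, where $\mathrm{Ann}(F)$ is the apolar ideal of polynomials annihilating $F$ under the contraction action. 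The requirements $\betti_2=3$ and generation by $\coh^2$ force $F$ to be concise, and Macaulay's theorem then gives Hilbert function $(1,3,3,1)$ for the apolar algebra, so $\dim\mathrm{Ann}(F)_2=3$. Rational ellipticity thus reduces to: $\betti_3(M)=0$ and the three apolar quadrics have no common zero in $\CP^2$.

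With this reduction, I would go through the classification of concise real ternary cubic forms up to $\mathrm{GL}_3(\R)$--equivalence, i.e.\ real plane cubic curves up to projective equivalence with scaling, computing $\mathrm{Ann}(F)_2$ case-by-case from the second partial derivatives of $F$ and determining its base locus. The smooth cubics sit inside the Hesse pencil $F_\sigma=x^3+y^3+z^3+6\sigma xyz$, for which a direct calculation gives
\[\mathrm{Ann}(F_\sigma)_2=\langle yz-\sigma x^2,\ xz-\sigma y^2,\ xy-\sigma z^2\rangle.\]
Multiplying the three defining equations of the common-zero locus yields $(\sigma^3-1)(xyz)^2=0$, from which a short analysis shows that the complete-intersection condition over $\R$ fails exactly at $\sigma=0$ (the Fermat cubic, whose apolar ideal is $(xy,xz,yz)$ with three coordinate base points) and at $\sigma=1$ (where $(1{:}1{:}1)$ is a common zero). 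The value $\sigma=-\tfrac12$ makes the Hesse cubic degenerate into $(x+y+z)$ times a rank-two real conic, producing a form $\mathrm{GL}_3(\R)$--equivalent to $z(x^2+y^2)$, and is therefore dropped from the family to avoid duplication. For the remaining equivalence classes one verifies case-by-case that the two real nodal cubics (crunode and acnode) yield $x^3\pm3x^2z-3y^2z$ and pass the test; that the cuspidal cubic fails, its apolar quadrics sharing the points $(1{:}0{:}0)$ and $(0{:}1{:}0)$; and that the reducible cubics contribute $xyz$ (real triangle), $z(x^2+y^2)$ (line plus complex-conjugate line pair), $z(x^2+y^2-z^2)$, $x(x^2+y^2-z^2)$, and $x(x^2+y^2+z^2)$, while tangent-to-conic configurations, concurrent-line configurations, and multiple-line configurations either fail the base-locus test or are non-concise.

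The principal obstacle I foresee is establishing completeness of the list over $\R$: one must argue that every $\mathrm{GL}_3(\R)$--equivalence class of concise real cubics is represented. For smooth curves this reduces to putting every real smooth plane cubic into real Hesse form, which can be achieved by placing a real inflection point at $(0{:}1{:}0)$ with its tangent along the line at infinity to obtain a real Weierstrass cubic and then matching this with a real member of the Hesse pencil via the $j$-invariant together with the real structure. A secondary task is verifying that the listed representatives are pairwise $\mathrm{GL}_3(\R)$--inequivalent, which follows from discrete invariants (factorisation type over $\R$ and $\C$, number of real components, nature of the singular locus) in the singular and reducible cases, and from the $j$-invariant modulo the Hessian group action in the smooth case. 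Once these structural ingredients are in place the theorem reduces to the explicit apolar-ideal computations sketched above, all of which are routine linear algebra combined with an elementary Bezout-style analysis of the base locus.
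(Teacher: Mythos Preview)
Your approach is correct and essentially the same as the paper's: both reduce via Theorem~\ref{TheoremDimension6Rational}(e) to checking, for each normal form of a concise real ternary cubic, whether the three-dimensional space of quadratic relations (your apolar ideal $\mathrm{Ann}(F)_2$, the paper's ``associated subspace'') is a regular sequence, equivalently has empty base locus in $\CP^2$. The only substantive difference is that the paper does not re-derive the classification of real cubics but quotes it from McKay~\cite{McK}, thereby sidestepping precisely the obstacle you flagged; your case analysis and the paper's Table~\ref{TableTernaryCubicForms} then agree line by line (e.g.\ your $\mathrm{Ann}(F_\sigma)_2=\langle yz-\sigma x^2,\,xz-\sigma y^2,\,xy-\sigma z^2\rangle$ is the paper's sequence $\sigma x_1^2-x_2x_3,\,\sigma x_2^2-x_1x_3,\,\sigma x_3^2-x_1x_2$ up to sign).
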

As a by-product of the proof of Theorem~\ref{TheoremDimension6Realb2leq2}  we get a classification of certain rationally hyperbolic 6--manifolds.
\begin{cor}\label{cor:hyperbolicDimension6}
A closed, simply connected 6--manifold $M$ with $\betti_2(M)\leq 2$ and $\betti_3(M)=0$ is rationally hyperbolic if and only if it has the real homotopy type of $(\Sph^2 \times \Sph^4) \#(\Sph^2 \times \Sph^4)$ or $\CP^3 \# (\Sph^2 \times \Sph^4)$.
\end{cor}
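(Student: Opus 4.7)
The plan is to reduce to an algebraic classification via Theorem~\ref{TheoremDimension6Rational} and Miller's formality theorem in dimension $\leq 6$. By cases \ref{b2=0b3=0}, \ref{b2=1b3=0}, and \ref{b2=2b3=0} of the theorem, the hypotheses $\betti_2(M)\leq 2$ and $\betti_3(M)=0$ force rational hyperbolicity to be equivalent to $\betti_2(M)=2$ together with $\coh^*(M;\Q)$ not being generated by $\coh^2(M;\Q)$. Since $M$ is formal, its real homotopy type is determined by its real cohomology ring, so it suffices to show there are exactly two isomorphism classes of such rings, realized by the two named connected sums.

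By Poincar\'e duality we have $\betti_4(M)=2$ and $\betti_6(M)=1$, and the non-generation hypothesis (which passes between $\Q$ and $\R$) means the image $I:=\coh^2(M;\R)\cdot\coh^2(M;\R)\subset\coh^4(M;\R)$ is at most one-dimensional. I would split into cases on $\dim I$. If $\dim I=0$, then all products of degree-$2$ classes vanish and the entire ring structure is encoded by the Poincar\'e duality pairing $\coh^2\otimes\coh^4\to\coh^6$; passing to dual bases recovers exactly the ring of $(\Sph^2\times\Sph^4)\#(\Sph^2\times\Sph^4)$.

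If $\dim I=1$, pick a spanning vector $\gamma$. Poincar\'e duality forces some $x\in\coh^2$ with $x\gamma\neq 0$, and I would then choose a complementary $y\in\coh^2$ with $y\gamma=0$. Writing $x^2=a\gamma$, $xy=b\gamma$, $y^2=c\gamma$, the associativity identities $x^2\cdot y = x\cdot(xy)$ and $(xy)\cdot y = x\cdot(y^2)$, combined with $y\gamma=0$ and $x\gamma\neq 0$, immediately yield $b=c=0$, so $xy=y^2=0$ and $x^2=a\gamma\neq 0$; rescaling $x$ makes $a=1$. Choosing a complement $\beta\in\coh^4$ to $\gamma$, the Poincar\'e pairing forces $y\beta\neq 0$, and replacing $\beta$ by $\beta-\lambda\gamma$ for a suitable $\lambda$ arranges $x\beta=0$. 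The resulting ring is isomorphic to the cohomology of $\CP^3\#(\Sph^2\times\Sph^4)$, with $x,y$ corresponding to the degree-$2$ generators coming from the $\CP^3$ and $\Sph^2\times\Sph^4$ summands, and $\beta$ to the degree-$4$ generator of $\Sph^2\times\Sph^4$.

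For the converse direction, a direct computation of the connected-sum cohomology rings shows that in both models $\coh^2\cdot\coh^2\subsetneq\coh^4$, so the criterion of Theorem~\ref{TheoremDimension6Rational}\ref{b2=2b3=0} fails and both manifolds are rationally hyperbolic. The two rings are non-isomorphic because the associated cubic form on $\coh^2$ vanishes identically for $(\Sph^2\times\Sph^4)\#(\Sph^2\times\Sph^4)$ but is a nonzero perfect cube for $\CP^3\#(\Sph^2\times\Sph^4)$. The main obstacle throughout the argument is the algebraic bookkeeping in the $\dim I=1$ case: associativity, symmetry of the cup product, and the non-degeneracy of Poincar\'e duality must be carefully combined to pin down the ring up to isomorphism.
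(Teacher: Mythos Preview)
Your proof is correct. The approach differs from the paper's, though: the paper derives the corollary as an immediate consequence of the proof of Theorem~\ref{TheoremDimension6Realb2leq2}, which invokes McKay's classification of binary real cubic forms and Lemma~\ref{LemmaBinaryCubicFormsRealization}. There, one simply reads off that among the five binary forms $0,\,x^3,\,x^2y,\,x^3+y^3,\,x^2y-xy^2$, precisely $0$ and $x^3$ fail to make $\coh^2$ generate $\coh^4$, and these are realized by the two named connected sums. Your argument instead classifies the non-generating rings directly, via the case split on $\dim I$ and the associativity/Poincar\'e duality bookkeeping; this is more self-contained and avoids importing the cubic-form classification, at the cost of a bit more elementary algebra. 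Note that your detailed analysis of $\beta$ in the $\dim I=1$ case could be shortened by using the fact (stated in Section~\ref{sec:Prelim}) that the real cohomology ring of a closed simply connected $6$--manifold with $\betti_3=0$ is already determined by the cubic form on $\coh^2$; once you know the form is $x^3$, the ring is pinned down.
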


A similar statement for the real  cubic forms associated to closed, simply connected, rationally hyperbolic 6--manifolds with $\betti_2=3$ and $\betti_3=0$ can be read off Table~\ref{TableCubicFormsExamples}.

In the seven-dimensional case we can classify the rational homotopy types. Note that the manifolds in the theorem have pairwise distinct rational homotopy types.

\begin{thm}\label{TheoremDimension7Rational}
A closed, simply connected 7--manifold is rationally  elliptic if and only if it has the rational homotopy type of one of the following manifolds: \\
$\Sph^7$, $\Sph^2 \times \Sph^5$, $\CP^2 \times \Sph^3$, $\Sph^3 \times \Sph^4$, $N^7$ or $M_\sigma^7$ for some $\sigma \in \Q^* / (\Q^*)^2$.
\end{thm}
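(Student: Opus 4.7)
The plan is to classify minimal Sullivan models $(\Lambda V,d)$ of closed, simply connected, rationally elliptic $7$--manifolds $M$ up to quasi--isomorphism, and to exhibit a manifold realizing each resulting rational homotopy type. Since $\dim M=7$ is odd, $\chi(M)=0$, so by Halperin's dichotomy for elliptic spaces the homotopy Euler characteristic $\chi_\pi(M)=q-p$ is strictly negative, where $q=\dim V^{\mathrm{even}}$ and $p=\dim V^{\mathrm{odd}}$. Writing the even and odd generator degrees as $2a_i$ and $2b_j-1$, combining $\chi_\pi<0$ with the Friedlander--Halperin bounds $\sum 2a_i\le 7$ and $\sum(2b_j-1)\le 13$, the formal--dimension identity $7=\sum(2b_j-1)-\sum(2a_i-1)$, and the constraints $a_i\ge 1$, $b_j\ge 2$ coming from simple connectedness, a direct enumeration (using the techniques of Section~\ref{susubsec:Exponents}) leaves finitely many admissible tuples, all with $q\le 2$. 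In every admissible case a degree count shows $d(V^{\mathrm{odd}})\subseteq \Lambda V^{\mathrm{even}}$, so the minimal model is automatically pure and Halperin's ellipticity criterion applies.

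For $q=0$ only $\Sph^7$ arises. For $q=1$, where $\Lambda V^{\mathrm{even}}=\Q[x]$ is a PID, minimality forces each $dy_j$ to be a scalar multiple of a single power $x^k$; inspecting the admissible degree tuples $(|x|;|y_1|,|y_2|)\in\{(2;3,5),(4;3,7),(4;5,5)\}$ and rescaling the odd generators produces exactly the rational homotopy types of $\Sph^2\times\Sph^5$, $\CP^2\times\Sph^3$ and $\Sph^3\times\Sph^4$, the tuple $(4;5,5)$ failing ellipticity and the case $|x|=6$ reducing to $\Sph^7$ by cancellation of a non--minimal pair.

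The heart of the argument is $q=2$, where only the exponents $(|x_1|,|x_2|;|y_1|,|y_2|,|y_3|)=(2,2;3,3,3)$ survive (the alternative $(|x_1|,|x_2|)=(2,4)$ is excluded because every decomposable quartic in $\Q[x_1,x_2]$ is a scalar multiple of $x_1^2$, so the ideal generated by the $dy_j$ can never be $(x_1,x_2)$--primary). The differential is then a linear map $d:V^3\to\mathrm{Sym}^2 V^2$, and by Halperin's criterion ellipticity is equivalent to the image $W:=d(V^3)$ containing a regular sequence of length two, i.e.\ two coprime quadratics in $\Q[x_1,x_2]$. Up to isomorphism of Sullivan models the only remaining invariant is the $GL(V^2)$--orbit of $W\subseteq \mathrm{Sym}^2 V^2\cong \Q^3$. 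If $\dim W=3$ the subspace is canonical, yielding a single rational homotopy type, realized e.g.\ by the connected sum $N^7:=(\Sph^2\times \Sph^5)\#(\Sph^2\times \Sph^5)$. If $\dim W=2$, then $W$ is determined by its one--dimensional annihilator in $\mathrm{Sym}^2(V^2)^*$, i.e.\ by a nonzero projective binary quadratic form $q_W$, and the regular--sequence condition translates into nondegeneracy of $q_W$; the classical classification of nondegenerate projective binary quadratic forms over $\Q$ modulo $GL_2(\Q)$ is by the discriminant class in $\Q^*/(\Q^*)^2$, yielding the family $\{M_\sigma^7\}_{\sigma\in\Q^*/(\Q^*)^2}$, realized e.g.\ as products $B_\sigma^4\times \Sph^3$ with $B_\sigma^4$ a simply connected 4--manifold whose intersection form has class $\sigma$. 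The case $\dim W\le 1$ fails ellipticity.

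Pairwise distinctness of the listed rational homotopy types follows by inspection of the rational cohomology ring: for $q\le 1$ the Betti numbers already separate the four types, and for $q=2$ the invariant $\sigma$ is recovered from the one--dimensional cup--product pairing $\coh^2(M;\Q)\otimes\coh^2(M;\Q)\to\coh^4(M;\Q)$, which is canonically identified with $q_W$. The main obstacle will be the case $q=2$: specifically the identification of $GL_2(\Q)$--orbits of 2--dimensional subspaces of $\mathrm{Sym}^2 V^2$ with $\Q^*/(\Q^*)^2$ via the classification of binary quadratic forms, together with the explicit geometric realization of $N^7$ and of each $M_\sigma^7$ by a closed, simply connected 7--manifold.
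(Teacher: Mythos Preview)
Your classification of the minimal Sullivan models is essentially correct and follows the same outline as the paper: enumerate exponents, observe purity in the relevant cases, and for the exponents $(2,2;3,3,3)$ split according to $\dim d(V^3)\in\{2,3\}$. Your identification of the $\dim d(V^3)=2$ case with nondegenerate projective binary quadratic forms, classified by $\Q^*/(\Q^*)^2$, is a nice reformulation equivalent to the paper's direct diagonalization argument.

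However, both of your explicit realizations are wrong, and these are genuine gaps.

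\textbf{The rank-3 model is not $(\Sph^2\times\Sph^5)\#(\Sph^2\times\Sph^5)$.} In the Sullivan algebra $(\Lambda(x_1,x_2,y_1,y_2,y_3),d)$ with $dy_1=x_1^2$, $dy_2=x_2^2$, $dy_3=x_1x_2$, the class $[x_1y_3-x_2y_1]\in H^5$ is precisely the triple Massey product $\langle[x_1],[x_1],[x_2]\rangle$, which is nonzero with zero indeterminacy (since $H^3=0$). Hence this rational homotopy type is \emph{non-formal}. But connected sums of simply connected formal manifolds are formal, and $\Sph^2\times\Sph^5$ is formal; so the connected sum cannot have this minimal model. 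In fact $(\Sph^2\times\Sph^5)\#(\Sph^2\times\Sph^5)$ is rationally hyperbolic: building the minimal model of its cohomology algebra, any cdga map to $(H^*,0)$ must send each $y_j$ to $H^3=0$, hence kills $[x_1y_3-x_2y_1]$, so one is forced to introduce new generators in degree $4$ (and then infinitely many more). The paper realizes this model instead by the homogeneous space $N^7=(\SU(2))^3/\T^2$.

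\textbf{The product realization of $M_\sigma^7$ fails for $\sigma\neq[\pm1]$.} A closed simply connected $4$--manifold has a \emph{unimodular} intersection form over $\Z$, so over $\Q$ its discriminant class in $\Q^*/(\Q^*)^2$ is $[\pm1]$. There is no $B_\sigma^4$ as you describe for any other $\sigma$. The paper makes exactly this point (Remark~\ref{RemarkDefinitionXsigma}) and obtains the manifolds $M_\sigma^7$ via Sullivan's realization theorem, which applies since $7$ is not divisible by $4$.

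Both gaps are repaired by invoking Sullivan's realization theorem (Section~\ref{subsubsec:RealizationManifold}) in place of your explicit constructions; but as written, the proposal does not establish that every model on the list is realized by a closed manifold.
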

Here the manifolds $M^7_\sigma$ are realizations of certain minimal models which exist by Sullivan's realization result (see Section~\ref{subsubsec:RealizationManifold}). We can choose \[M^7_{[1]}=\Sph^3 \times (\CP^2 \#  \CP^2)\text{ and }M^7_{[-1]}=\Sph^3 \times (\CP^2 \#  \overline{\CP}^2),\] where $\overline{\CP}^2$ denotes $\CP^2$ with reversed orientation. For $\sigma\neq [\pm1]$ we do not know of a nice realization of $M_\sigma$ as a manifold (see Proposition~\ref{PropM7sigmanot}), but $M_\sigma$ is rationally homotopy equivalent to a nonnegatively curved orbifold (see Remark~\ref{RemarkDefinitionXsigma}). The manifold $N^7$ is a homogeneous space $(\SU(2))^3/\,\T^2$.  Furthermore $N^7$ is an example of a non-formal manifold (see \cite[Example 2.91]{FOT}).
 
This paper is organized as follows. In Section~\ref{sec:Prelim} we recall some preliminaries on rational homotopy theory and the cohomology rings of 6--manifolds. Section~\ref{sec:DimensionSix} is divided into two parts in which Theorems~\ref{TheoremDimension6Rational},~\ref{TheoremDimension6Realb2leq2} and~\ref{TheoremDimension6Realb2eq3} are proven. In Section~\ref{sec:DimensionSeven}, we prove Theorem~\ref{TheoremDimension7Rational} and we also give a classification of the real and complex homotopy types of closed, simply connected, rationally elliptic 7--manifolds. In Section~\ref{sec:DimensionsEightAndNine} we state and prove some partial classification results in dimensions 8 and 9.

The results in this article were part of the author's dissertation \cite{MHDiss} at the Karlsruhe Institute of Technology. Part of the research was carried out at the University of Fribourg. The author wishes to thank his advisor Wilderich Tuschmann and Anand Dessai for helpful and stimulating discussions. Furthermore, he wishes to thank the referee of a previous version of this manuscript for various helpful remarks.

\section{Preliminaries}\label{sec:Prelim}
\subsection{Rational homotopy theory}
For rational homotopy theory, we use the books \cite{FHT} and \cite{FOT} as  references and use their notation. For the convenience of the reader, we give an overview over the results that we need.

\subsubsection{Basic definitions}
Let $\K$ be a field of characteristic zero. By $X$ we always denote a simply connected space $X$ with finite Betti numbers. A \emph{commutative differential graded algebra}  (cdga henceforth) $(A,d)$ over $\K$ is a graded algebra $A=\bigoplus_{k\geq0} A^k$ with unit which is commutative in the graded sense, that is $ab=(-1)^{pq} b a$ for $a\in A^p$, $b\in A^q$, together with a linear differential $d:A\to A$ satisfying $d^2=0$, $d(A^k)\subset A^{k+1}$ and $d(ab)=d(a) \;b+(-1)^p a\; d(b)$ for $a\in A^p$.

 For a graded vector space $V=\bigoplus_{k\geq 0} V^k$, we denote by $\Lambda V$ the tensor product of the polynomial algebra on $V^\text{even}=\bigoplus_{k\geq 0} V^{2k}$ and the outer algebra on $V^\text{odd}=\bigoplus_{k\geq0}V^{2k+1}$. If $x_1,\dots, x_n$ is a (homogeneous) basis of $V$  we also write $\Lambda(x_1,\dots,x_n)$ for $\Lambda V$. Furthermore, we will use the following conventions.  The elements of degree $k$ in the graded algebra $\Lambda V$ will be denoted by $(\Lambda V)^k$, while we denote by $\Lambda^k V$ the linear subspace generated by elements of word length $k$ in $V$. Furthermore $\Lambda V^k=\Lambda(V^k)$. The degree of a homogeneous element $v\in \Lambda V$ will be denoted by $|v|$.

A \emph{Sullivan algebra} is a cdga $(\Lambda V,d)$ with $V=V^{\geq1}$ such that there exists a basis $\{x_\alpha\}_{\alpha\in I}$ with $I$ a well-ordered index set, such that $d x_i \in \Lambda(x_j, j<i)$. If $V^1=\{0\}$ the existence of such a basis follows for every $(\Lambda V,d)$.

A Sullivan algebra  $(\Lambda V,d)$ is called \emph{minimal} if  $d(V)\subset \Lambda^{\geq2}V$.

If $(A,d)$ is a cdga with $\coh^0(A,d)\isom \K$, then there exists a \emph{minimal model} of $(A,d)$, that is a minimal Sullivan algebra $(\Lambda V,d)$ and a homomorphism $\varphi:(\Lambda V,d) \to (A,d)$ inducing an isomorphism in cohomology. The minimal model is unique up to isomorphism.

To a space $X$ one can associate a cdga $(\mathrm{A_{PL}}(X;\K),d)$ (see \cite[Chapter 10]{FHT}), such that  $\coh^*(X;\K)\isom \coh^*(\mathrm{A_{PL}}(X;\K),d)$. The $\K$--minimal model of $X$ is the minimal model of $(\mathrm{A_{PL}}(X;\K),d)$.

If $(\Lambda V,d)$ is the rational minimal model of $X$, then $(\Lambda V,d)\tensor \K$ is the $\K$-minimal model of $X$. We say that $X$ and $Y$ have the same \emph{$\K$--homotopy type}, if their $\K$--minimal models are isomorphic, and write $X\simeq_\K Y$. For $\K=\Q$ this is equivalent to the usual definition.

If $(\Lambda V,d)$ is the rational minimal model of a simply connected space $X$, then $V^1=\{0\}$ and $V^k\isom\mathrm{Hom}(\pi_k(X),\Q)$. A  minimal Sullivan algebra $(\Lambda V,d)$ is called \emph{rationally elliptic} if $\dim V=\sum_{k}\dim V^k< \infty$ and $\dim \coh^*(\Lambda V,d)< \infty$.

\subsubsection{Realization of minimal models by manifolds}\label{subsubsec:RealizationManifold}
For a cdga $(A,d)$ the \emph{formal dimension} is defined as the maximal $k\in \N$ with $\coh^k(A,d)\neq\{0\}$, if such a $k$ exists, else it is defined to be $\infty$.

By a theorem of Sullivan \cite[Section 13]{Sull}, compare also \cite{Barge76} and \cite[Theorem 3.2]{FOT}, the following holds: 

Let $(\Lambda V, d)$ be a rational  minimal Sullivan algebra of formal dimension $n$ with $V=V^{\geq2}$ and let $\coh^*(\Lambda V,d)$ satisfy Poincaré duality. Then, if $n$ is not divisible by 4, there is a compact simply connected manifold realizing $(\Lambda V,d)$. If $n=4k$ is divisible by 4 and the signature of the quadratic form on $\coh^{2k}(\Lambda V,d)$ is zero, then $(\Lambda V,d)$ is realizable by a compact, simply connected manifold, if and only if in some basis of $\coh^{2k}(\Lambda V,d)$ and for some identification $\coh^{4k}(\Lambda V,d)\isom \Q$ the form is given by $\sum\pm x_i^2$. In the case that the signature is nonzero, there are additional conditions on chosen Pontryagin numbers. Here, for $n=4k$, we will only use the case, where the signature is zero.

By a theorem of Halperin \cite[Theorem 3]{Halperin77} a rationally elliptic minimal model satisfies Poincaré duality. Therefore, every simply connected, rationally elliptic minimal Sullivan algebra of formal dimension $n$, with $n$ not divisible by 4, is the minimal model of a compact, simply  connected $n$-manifold.

\subsubsection{Exponents}\label{susubsec:Exponents}
Recall that the (a- and b-)exponents of a rationally elliptic, minimal Sullivan algebra $(\Lambda V,d)$ are $a \in \N^q$ and $b \in \N^r$ if there exist homogeneous bases $x_1,\dots,x_q$ of $V^{\mathrm{even}}$ and $y_1, \dots,y_r$ of $V^{\mathrm{odd}}$, such that $|x_i|=2 a_i$  and $|y_j|=2 b_j-1$. The pairs of tuples $a \in \N^q$ and $b \in \N^r$ that arise as exponents of rationally elliptic minimal Sullivan algebras have a purely arithmetic description.

\begin{defi*}[Strong arithmetic condition (SAC)]
The tuples $a \in \N^q$ and $b \in \N^r$ satisfy (SAC) if for all $1\leq s\leq q$ and $1\leq i_1<\dots<i_s\leq q$ there exist $1\leq j_1<\dots<j_s\leq r$ such that there are $\gamma_{kl}\in \N_0$ with 
\[b_{j_k}=\sum_{l=1}^s \gamma_{kl} a_{i_l}\qquad \text{and} \qquad \sum _{l=1}^s\gamma_{kl}\geq 2\avoidbreak\] 
for all $k=1,\dots,s$.
\end{defi*}

Friedlander and Halperin showed in \cite{FrH} that $a\in \N^q$ and $b \in \N^r$ with $b_j\geq 2$ for $j=1,\dots, r$ arise as the exponents of a simply connected, rationally elliptic minimal Sullivan algebra if and only if they satisfy (SAC). Furthermore the exponents of a simply connected, rationally elliptic minimal Sullivan algebra $(\Lambda V,d)$ satisfy (see \cite{FHT})
\begin{enumerate}[label=\textnormal{(\alph*)}]
\item $\dim V^{\text{even}}=q\leq r =\dim V^{\text{odd}} $\textup{;}
\item $ \sum_{i=1}^q 2a_i\leq n$\textup{;}
\item $\sum_{j=1}^r (2b_j-1) \leq 2n-1$\textup{;}
\item $n= 2 \left(\sum_{j=1}^r b_j - \sum_{i=1}^q a_i \right) -( r-q) $,
\end{enumerate}
where $n$ is the formal dimension of $(\Lambda V,d)$.

This is enough to compute the possible vector spaces $V$ that arise in the minimal models $(\Lambda V,d)$ of closed, simply connected manifolds of a given dimension.   

\subsubsection{Pure Sullivan algebras and regular sequences}\label{subsubsec:PureSullivanAlgebrasRegularSequences}The notions of pure Sullivan Algebras and regular sequences will be essential in the proof of our results in dimension~6. 

A Sullivan algebra $(\Lambda V,d)$ is called pure if $\dim V<\infty$, $d(V^{\mathrm{even}})=0$ and $d(V^{\mathrm{odd}})\subset\Lambda V^{\mathrm{even}}$.

Let $R$ be a ring. Recall that a sequence $r_1,r_2,\dots,r_k$ of elements of $R$ is called \emph{regular} if $r_1$ is not a zero divisor in $R$ and $r_i$ is not a zero divisor in $R/(r_1,\dots,r_{i-1})$ for $i=2,\dots,n$. In general being regular depends on the order of the sequence $r_1,\dots,r_n$. However, we are only interested in the case where $R=\K[x_1,\dots,x_n]$ is a polyomial ring over a field and the $r_i$ are homogeneous polynomials. In this case, being regular does not depend on the order of the elements $r_1, \dots,r_n$ (see \cite[Corollary to Theorem 16.3]{Matsumura} for example).

These two notions can be brought together as follows. Let $(\Lambda V,d)$ be a pure minimal Sullivan algebra with $\dim V^{\mathrm{even}}=\dim V^{\mathrm{odd}}$ and $y_1, \dots,y_k$ a basis of $V^{\mathrm{odd}}$, then $(\Lambda V,d)$ is rationally elliptic if and only if $dy_1,\dots,dy_k$ is a regular sequence. Furthermore, if $(\Lambda V,d)$ is rationally elliptic, then $\coh^*(\Lambda V,d)\isom \Lambda V^{\mathrm{even}}/(dy_1,\dots,dy_k)$. This follows from \cite[Propositions 32.2 and 32.3]{FHT} and \cite[Corollary 3.2]{Stanley78}.

\subsection{Cohomology rings of 6--manifolds}
Let $\K$ be a field of characteristic zero. By a result of Miller \cite{Miller79}, in dimensions $\leq 6$ every closed, simply connected manifold $M$ is formal, i.e. its minimal model over $\K$ is also a minimal model for the cdga $(\coh^*(M;\K),0)$. Due to the uniqueness of the minimal model, two formal spaces have the same $\K$--homotopy type if and only if their cohomology rings with coefficients in $\K$ are isomorphic. Therefore, in dimension 6 we only need to consider the cohomology rings.

The isomorphism class of the cohomology ring $\coh^*(M;\K)$ of a closed, simply connected 6--manifold $M$ is determined by the dimension of $\coh^3(M;\K)$ and the equivalence class of the cubic form on $\coh^2(M;\K)$ given by the cup product to $\coh^6(M;\K)\isom \K$. The equivalence relation we use is given by changing the basis of $\coh^2(M;\K)$ and scaling the form by a number in $\K$ (the scaling isn't necessary for $\K=\R$ or $\C$).

By a result of Wall \cite{Wall}, every rational cubic form is also realizable as the form associated to a closed, simply connected, spin manifold of dimension 6 with $\betti_3=0$ and torsion free homology.

We will use two equivalent definitions of cubic forms on a vector space $V$ of finite dimension $n$ in this paper. The first is that of a symmetric multilinear map \[F: V \times V \times V \to \K,\] which is uniquely determined by the coefficients $F_{ijk}=F(e_i,e_j,e_k)$ with $i\leq j\leq k$ for some basis $e_1,\dots,e_n$ of V. The second description is that of a homogeneous polynomial of degree 3 in $n$ variables. 

These definitions can be identified via
\[F\mapsto F(\sum_{i=1}^n x_i e_i,\sum_{i=1}^n x_i e_i,\sum_{i=1}^n x_i e_i)\in \K[x_1,\dots,x_n].\]

\section{Six-dimensional manifolds}\label{sec:DimensionSix}
\subsection{The rational case (proof of Theorem~\ref{TheoremDimension6Rational})}
The possible exponents have already been calculated by Pavlov using the results of Friedlander and Halperin mentioned in Section \ref{susubsec:Exponents}. 
\begin{lem}[See \cite{Pavlov}]\label{Lemma6dimensionalExponents}
A closed, simply connected, rationally elliptic 6--manifold has one of the following exponents:
\begin{multicols}{2}
\begin{enumerate}
\item[(6.1)] $a=(~)$, $b=(2,2)$
\item[(6.2)] $a=(1)$, $b=(4)$
\item[(6.3)] $a=(3)$, $b=(6)$
\item[(6.4)] $a=(1,1)$, $b=(2,3)$
\item[(6.5)] $a=(1,2)$, $b=(2,4)$
\item[(6.6)] $a=(1,1,1)$, $b=(2,2,2)$
\end{enumerate}
\end{multicols}
\end{lem}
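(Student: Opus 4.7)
The plan is to enumerate the possibilities for the tuples $a\in\N^q$ and $b\in\N^r$ by applying the four inequalities (a)--(d) of Section~\ref{susubsec:Exponents} together with (SAC). Since $M$ is simply connected, $V^1=0$, hence $a_i\geq 1$ and $b_j\geq 2$ for all $i,j$, which lets us invoke the result of Friedlander--Halperin that (SAC) characterizes the possible exponents. Condition (b) with $n=6$ gives $\sum a_i\leq 3$, so $q\leq 3$ and the possible $a$-tuples are exactly $(\,)$, $(1)$, $(2)$, $(3)$, $(1,1)$, $(1,2)$, $(1,1,1)$. For each such $a$, condition (d) determines $\sum b_j$ as an explicit linear function of $r$, and in particular forces a parity on $r$; combining this with the lower bound $b_j\geq 2$, the upper bound from (c), and the SAC-requirements, only finitely many small values of $r$ remain to check.

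I would then go case by case. For $q=0$: (d) reads $2\sum b_j-r=6$; combined with $\sum b_j\geq 2r$ this forces $r\leq 2$, and the integrality condition then leaves only $b=(2,2)$. For $q=1$ with $a=(a_1)$, (d) reads $2\sum b_j=2a_1+5+r$, so $r$ is odd, and (SAC) applied to the singleton $\{1\}$ demands some $b_j=\gamma a_1$ with $\gamma\geq 2$; checking $r=1,3,\ldots$ against (c) and the lower bound $b_j\geq 2$ yields only $(a,b)=((1),(4))$ for $a_1=1$, nothing for $a_1=2$ (the would-be $b=(2,2,2)$ fails SAC, and $r=1$ forces a non-integer), and $(a,b)=((3),(6))$ for $a_1=3$. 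For $q=2$, the only admissible $a$-tuples are $(1,1)$ and $(1,2)$; (d) now forces $r$ even, and the SAC requirement on the full set $\{1,2\}$ (together with the singleton requirement for $a_2=2$ in the second subcase) again leaves only $r=2$, producing $b=(2,3)$ and $b=(2,4)$ respectively. For $q=3$, only $a=(1,1,1)$ is admissible, $r$ is odd, and SAC on $\{1,2,3\}$ together with the size constraint forces $r=3$ and $b=(2,2,2)$.

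Collecting these seven surviving pairs gives exactly the six cases (6.1)--(6.6). The bulk of the work is routine arithmetic bookkeeping once the integrality/parity consequences of (d) are unpacked; the only place where some care is required is in systematically ruling out larger values of $r$, where one must observe that the minimum value $2r$ of $\sum b_j$ together with any SAC-enforced replacement of a $2$ by a larger integer already exceeds the value prescribed by (d). Since the existence half of Friedlander--Halperin's theorem guarantees that each surviving pair is realized by a minimal Sullivan algebra, and every closed simply connected rationally elliptic $6$-manifold has such a minimal model, this enumeration is both necessary and sufficient.
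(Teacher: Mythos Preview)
Your approach is correct and is precisely the one the paper attributes to Pavlov: enumerate admissible $(a,b)$ using conditions (a)--(d) together with (SAC) from Section~\ref{susubsec:Exponents}. Two small slips to fix: (i) for $a=(2)$, $r=1$ the formula gives $b=(5)$, which is an integer --- what fails is (SAC), since $5$ is not of the form $\gamma\cdot 2$ with $\gamma\geq 2$; (ii) you collect six surviving pairs, not seven.
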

In four of these cases the minimal model is already determined by its vector space structure.
\begin{lem}[See \cite{Pavlov}]
A closed, simply connected, rationally elliptic 6--manifold with exponents like in
\begin{itemize}
\item (6.1) is rationally homotopy equivalent to $\Sph^3\times \Sph^3$;
\item (6.2) is rationally homotopy equivalent to $\CP^3$;
\item (6.3) is rationally homotopy equivalent to $\Sph^6$;
\item (6.5) is rationally homotopy equivalent to $\Sph^2 \times \Sph^4$,
\end{itemize}
\end{lem}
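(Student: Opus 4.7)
The plan is case-by-case: in each case, the exponents fix the vector space $V$, and I use minimality ($dV \subset \Lambda^{\geq 2}V$), the Leibniz rule, $d^2 = 0$, and rational ellipticity to pin down the differential up to isomorphism of Sullivan algebras, then compare to the minimal model of the claimed manifold.

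Cases (6.1), (6.2), and (6.3) are essentially forced by degree. In (6.1), $V = V^{\mathrm{odd}}$ is concentrated in degree $3$ with $V^{\mathrm{even}} = 0$, so $(\Lambda^{\geq 2}V)^4 = 0$ and both generators are closed; this gives the model of $\Sph^3 \times \Sph^3$. In (6.2) and (6.3), $V$ has one even generator $x$ and one odd generator $y$; minimality gives $dx = 0$ by degree, while $dy = c\, x^k$ with $k = 4$ (resp.\ $k = 2$) is the only nonzero option, and $c$ must be nonzero by rational ellipticity. Rescaling $y$ recovers the standard minimal models of $\CP^3$ and $\Sph^6$ respectively.

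Case (6.5) is the substantive one, with even generators $x_1, x_2$ of degrees $2, 4$ and odd generators $y_1, y_2$ of degrees $3, 7$. Degree arguments force
\[
dx_1 = 0, \quad dy_1 = \alpha\, x_1^2, \quad dx_2 = \beta\, x_1 y_1, \quad dy_2 = \gamma_1 x_1^4 + \gamma_2 x_1^2 x_2 + \gamma_3 x_2^2,
\]
and the relation $d^2 = 0$ forces $\alpha\beta = 0$ and $\beta\gamma_2 = \beta\gamma_3 = 0$. The crux is to rule out $\beta \neq 0$: in that case $\alpha = \gamma_2 = \gamma_3 = 0$, and a direct computation on the subalgebra $\Lambda(x_1, y_1, x_2)$ (which then has $dx_2 = \beta x_1 y_1$) shows that $\{[x_2^b y_1]\}_{b \geq 0}$ is an infinite family in cohomology, and these classes survive the attachment of $y_2$ with $dy_2 = \gamma_1 x_1^4$; this contradicts rational ellipticity.

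Hence $\beta = 0$, so $(\Lambda V, d)$ is pure, and the regular-sequence criterion of Section~\ref{subsubsec:PureSullivanAlgebrasRegularSequences} applied to $dy_1, dy_2 \in \Q[x_1, x_2]$ forces $\alpha, \gamma_3 \neq 0$. After rescaling $y_1, y_2$ so that $\alpha = \gamma_3 = 1$, the successive substitutions $x_2 \mapsto x_2 + \tfrac{\gamma_2}{2} x_1^2$ and $y_2 \mapsto y_2 - c\, x_1^2 y_1$ (with $c$ the residual $x_1^4$-coefficient) normalize the model to $dy_1 = x_1^2$, $dy_2 = x_2^2$, which is the minimal model of $\Sph^2 \times \Sph^4$. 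The main obstacle is ruling out the twisted case $\beta \neq 0$ in (6.5); the rest is bookkeeping with degrees and completing the square.
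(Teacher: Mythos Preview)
Your proof is correct. The paper does not supply its own argument for this lemma; it simply cites Pavlov, so there is nothing to compare against beyond noting that your case-by-case analysis is a clean self-contained substitute.

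One minor streamlining for case~(6.5): once $\beta\neq 0$ forces $\alpha=\gamma_2=\gamma_3=0$, instead of tracking which $x_1^k x_2^l y_1$ lie in the image you can observe that $x_1\mapsto 0$, $y_2\mapsto 0$, $x_2\mapsto x_2$, $y_1\mapsto y_1$ defines a surjective cdga map $(\Lambda V,d)\to(\Lambda(x_2,y_1),0)$. Since each $x_2^b y_1$ is closed in $(\Lambda V,d)$ (as $y_1^2=0$) and has nonzero image, it cannot be exact; hence $\dim\coh^*(\Lambda V,d)=\infty$. This packages your ``survive the attachment of $y_2$'' step without any bookkeeping. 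The remainder of your argument---the regular-sequence criterion forcing $\alpha,\gamma_3\neq 0$ and the completing-the-square isomorphism to the model of $\Sph^2\times\Sph^4$---is exactly right.
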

We will now deal with case (6.4). Let  $(\Lambda \tilde{V},d)=(\Lambda(x_1,x_2,y_1,y_2),d)$ be given by $|x_i|=2, |y_1|=3, |y_2|=5$ and $ dx_i=0$, $dy_1=x_1^2+f_2 \, x_2^2$, and $ dy_2 = g_1 \,x_1^3 + g_2 \, x_1^2 x_2 + g_3 \, x_1 x_2^2 + g_4 \, x_2^3$ for some $f_2,g_1,g_2,g_3,g_4 \in \Q$. 

Note that the minimal model of a closed, simply connected, rationally elliptic 6--manifold with exponents like in (6.4) is of this form: The quadratic form given by $dy_1$ cannot vanish, so one can choose an orthogonal basis for it and rescale.

\begin{lem}\label{LemmaVTildeModels}
The above model $(\Lambda \tilde{V},d)$ is the minimal model of a closed, simply connected 6--manifold if and only if 
\[ g_4\neq f_2 g_2\pm \sqrt{-f_2} (f_2 g_1-g_3)\tag{$*$}.\]
\end{lem}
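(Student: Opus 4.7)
\emph{Proof plan.} The cdga $(\Lambda \tilde V, d)$ is a pure minimal Sullivan algebra with $\dim V^{\mathrm{even}} = \dim V^{\mathrm{odd}} = 2$: both $x_1, x_2$ are closed, and $dy_1, dy_2 \in \Q[x_1,x_2] = \Lambda V^{\mathrm{even}}$. By the criterion recalled in Section~\ref{subsubsec:PureSullivanAlgebrasRegularSequences}, such a model is rationally elliptic if and only if the pair $(dy_1, dy_2)$ is a regular sequence in $\Q[x_1,x_2]$. In a polynomial ring of Krull dimension two, a pair of homogeneous elements is a regular sequence exactly when the ideal it generates has height two, equivalently (after base change to $\bar\Q$) when the two polynomials share no common zero in $\mathbb{P}^1_{\bar\Q}$.

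The quadratic $dy_1 = x_1^2 + f_2 x_2^2$ vanishes in $\mathbb{P}^1_{\bar\Q}$ precisely at $[x_1 : x_2] = [\pm\sqrt{-f_2} : 1]$, the two points colliding at $[0:1]$ when $f_2 = 0$. Substituting $x_1 = \pm\sqrt{-f_2}$, $x_2 = 1$ into $dy_2$ and separating the $\sqrt{-f_2}$--part yields
\[
dy_2\bigl|_{x_1 = \pm\sqrt{-f_2},\, x_2 = 1} \;=\; (g_4 - f_2 g_2) \;\mp\; \sqrt{-f_2}\,(f_2 g_1 - g_3),
\]
so $dy_1, dy_2$ is a regular sequence if and only if neither of these expressions vanishes, which is exactly the condition~$(\ast)$ read as two simultaneous constraints (over $\bar\Q$, though cutting out a rational locus in the parameters). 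The edge case $f_2 = 0$ is consistent: $(\ast)$ then reduces to $g_4 \neq 0$, i.e.\ that $dy_2$ does not vanish at the double zero $[0:1]$ of $dy_1$.

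Granting $(\ast)$, realization by a manifold is routine. Halperin's theorem applied to the rationally elliptic model $(\Lambda \tilde V, d)$ gives Poincaré duality of its cohomology, and the exponents $a=(1,1)$, $b=(2,3)$ determine the formal dimension $n = 2(2+3) - 2(1+1) = 6$. Since $n=6$ is not divisible by $4$, Sullivan's realization theorem as recalled in Section~\ref{subsubsec:RealizationManifold} produces a closed, simply connected $6$--manifold with minimal model $(\Lambda \tilde V, d)$. Conversely, if $(\Lambda \tilde V, d)$ is the minimal model of such a manifold, its cohomology is finite-dimensional, so $dy_1, dy_2$ is a regular sequence, i.e.\ $(\ast)$ holds. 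The only step requiring care is the arithmetic of the two sign choices in $(\ast)$ and the degenerate case $f_2 = 0$; the explicit substitution above makes this transparent, and everything else is a direct application of the pure-Sullivan/regular-sequence dictionary from Section~\ref{subsubsec:PureSullivanAlgebrasRegularSequences}.
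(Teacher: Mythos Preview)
Your proof is correct and takes a genuinely different route from the paper's. The paper argues directly: for necessity it computes the determinant of $d_7\colon (\Lambda\tilde V)^7 \to \Kern d_8$ in explicit bases, obtaining the quadratic $(g_4 - f_2 g_2)^2 + f_2(f_2 g_1 - g_3)^2$ whose vanishing gives~$(\ast)$; for sufficiency it shows $\coh^{\geq 9}(\Lambda\tilde V,d)=0$ by producing, for each $k\geq 4$, explicit linearly independent elements $v_1,\dots,v_k,u_1,u_2$ in $\Image d_{2k+1}$ and counting dimensions. You instead observe that $(\Lambda\tilde V,d)$ is pure with $\dim V^{\mathrm{even}}=\dim V^{\mathrm{odd}}=2$, invoke the regular-sequence criterion of Section~\ref{subsubsec:PureSullivanAlgebrasRegularSequences}, and then test regularity of $(dy_1,dy_2)$ geometrically via common zeros in $\mathbb{P}^1_{\bar\Q}$. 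Your approach is shorter and more conceptual, and in fact uses exactly the toolkit (regular sequences, the Nullstellensatz trick) that the paper itself deploys later for case~(6.6) and the models $(\Lambda V,d_\lambda)$; the paper's approach, by contrast, is self-contained elementary linear algebra requiring no commutative algebra beyond what is visible on the page. Both handle the realization step identically via Halperin and Sullivan.
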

\begin{proof}
To see that $(*)$ is necessary, one can compute the determinant of the differential  $d_7:  (\Lambda \tilde{V})^7 \to \Kern d_8$ in the bases  $y_1x_1^2, y_1x_1x_2, y_1x_2^2, y_2x_1,y_2x_2$ of $(\Lambda \tilde{V})^7$ and $x_1^4, x_1^3 x_2,x_1^2x_2^2x_1x_2^3,x_2^4$ of $\Kern d_8$. It is $f_2^3 g_1^2 + f_2^2 g_2^2 - 2 f_2^2 g_1 g_3 + f_2 g_3^2 - 2 f_2 g_2 g_4 + g_4^2\neq 0$. Solving for $g_4$, this gives $(*)$.

To see that $(*)$ is sufficient we only need to prove that $\coh^*(\Lambda \tilde{V},d)$ is finite dimensional. If we have done so, the formal dimension needs to be 6 due to its exponents and by the results mentioned in Section \ref{subsubsec:RealizationManifold} it is realized by a compact, simply connected 6--manifold. We show that $\dim \coh^{\geq9}(\Lambda \tilde{V},d)=0$  by an elementary calculation. 

Let $k\geq4$. It is easy to see that $d_{2k}$ is injective when restricted to the span  of $y_1y_2 x_1^ix_2^{k-4-i}$, $i=0,\dots,k-4$. So $\dim( \Image d_{2k})=k-3$ and $\dim( \Kern d_{2k})=k+1$.

The image of $d_{2k+1}$ is generated by 
\[v_i=d(y_1x_1^{k-i} x_2^{i-1})= x_1^{k-i+2}x_2^{i-1}+f_2 x_1^{k-i} x_2^{i+1}, \quad i=1, \dots,k\]
and 
\begin{align*} w_j&=d(y_2x_1^{k-1-j} x_2^{j-1})\\&=g_1 \, x_1^{k+2-j} x_2^{j-1}+g_2 \, x_1^{k+1-j} x_2^{j}+ g_3 \, x_1^{k-j} x_2^{j+1}+g_4 \, x_1^{k-1-j} x_2^{j+2}\end{align*}
for $j=1,\dots, k-1$. 

Let
\begin{align*}u_1&=w_{k-2}-g_1 \, v_{k-2} - g_2 \, v_{k-1} - (g_3-f_2 g_1) v_k\\&= (g_4-f_2 g_2 )\; x_1x_2^{k}- f_2 (g_3 - f_2 g_1)\, x_2^{k+1}  \end{align*}
and 
\begin{align*}u_2&=w_{k-1}-g_1 \, v_{k-1} - g_2 \, v_{k} \\&= (g_3-f_2 g_1 )\; x_1x_2^{k}+ (g_4 - f_2 g_2)\, x_2^{k+1}.  \end{align*}

Because of $(*)$, the elements $v_1,\dots, v_k,u_1,u_2$ are linearly independent. So $\dim \Image d_{2k+1}\geq k+2 =\dim \ker d_{2k+2}$ and therefore $\Image d_{2k+1}=\Kern d_{2k+2}$. By also computing their dimensions, we get $\Image d_{2k}=\Kern d_{2k+1}$.
\end{proof}

\begin{rem}
It is easy to see that the equivalence class of $f_2$ in $\Q/(\Q^*)^2$ is an invariant of the isomorphism class of $(\Lambda \tilde{V},d)$. Since for every $f_2 \in \Q$, one can choose $g_1,\dots,g_4$ such that $(*)$ holds, there are infinitely many rational homotopy types of closed, simply connected, rationally elliptic 6-manifolds with $\betti_2=2$, in contrast to real homotopy types of these. 
\end{rem}

In the following we assume that $(\Lambda \tilde{V},d)$ satisfies $(*)$.

Let $\omega_1$ and  $\omega_2$ be the cohomology classes of $x_1$ and $x_2$ and  $\alpha_1=f_2 g_1-g_3$ and $\alpha_2=f_2g_2-g_4$.
Then $\omega_1^3= -f_2 \omega_1 \omega_2^2$ and $\omega_1^2 \omega_2=-f_2 \omega_2^3$. Therefore
\[0=g_1\omega_1^3 + g_2 \omega_1^2 \omega_2 + g_3\omega_1 \omega_2^2+g_4\omega_2^3=-(\alpha_1\; \omega_1 \omega_2^2 + \alpha_2\; \omega_2^3).\]
Then $\Omega=-\alpha_2\; \omega_1 \omega_2^2+\alpha_1\; \omega_2^3\neq 0$, since $(\alpha_1, \alpha_2)\neq(0,0)$ due to $(*)$. We have $(\alpha_1^2 +\alpha_2^2)\omega_1 \omega_2^2=-\alpha_2 \Omega$ and  $(\alpha_1^2 +\alpha_2^2) \omega_2^3=\alpha_1 \Omega$.

Since we can use $\tfrac{1}{\alpha_1^2+\alpha_2^2} \Omega$ to define the cubic form $F$ associated to $\coh^*(\Lambda \tilde{V},d)$, it is given by the components \[F_{111}=f_2 \alpha_2, \quad F_{112} =-f_2 \alpha_1, \quad F_{122}=-\alpha_2, \quad F_{222}=\alpha_1\]
and because of $(*)$, we have $\alpha_2 \neq \pm \sqrt{-f_2} \alpha_1$.

On the other hand, every cubic form that is of this form with given parameters $f_2, \alpha_1,\alpha_2 \in \Q$ satisfying $\alpha_2 \neq \pm \sqrt{-f_2} \alpha_1$ is realized by a minimal model $(\Lambda \tilde{V},d)$ of a closed, simply connected, rationally elliptic 6--manifold.

\begin{lem}
Let $F$ be a cubic form on a two-dimensional vector space $V$ over $\Q$. Then there is a basis of $V$ and  $f_2, \alpha_1,\alpha_2\in \Q$ such that the components of $F$ in this basis are given by
\[F_{111}=f_2 \alpha_2, \quad F_{112} =-f_2 \alpha_1, \quad F_{122}=-\alpha_2, \quad F_{222}=\alpha_1.\]
\end{lem}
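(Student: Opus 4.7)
I would begin by writing $F$ as a polynomial $F(x,y) = Ax^3 + 3Bx^2y + 3Cxy^2 + Dy^3$ in the chosen basis $e_1, e_2$, so that $(A, B, C, D) = (F_{111}, F_{112}, F_{122}, F_{222})$. Matching these against the target components $(f_2\alpha_2, -f_2\alpha_1, -\alpha_2, \alpha_1)$, the last two equations force $\alpha_2 = -C$ and $\alpha_1 = D$, and the remaining equations $A = -f_2 C$, $B = -f_2 D$ are then solvable in $f_2 \in \Q$ precisely when the vectors $(A, B)$ and $(C, D)$ are linearly dependent, i.e.\ $AD = BC$ (the degenerate case $(C, D) = (0, 0)$ is dealt with by swapping $e_1$ and $e_2$). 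So the task reduces to exhibiting a basis of $V$ in which $AD - BC = 0$.

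To achieve this I plan to use elementary basis shifts. A short expansion shows that under $e_1' = e_1$, $e_2' = e_2 + \lambda e_1$, the new coefficients satisfy
\[ A'D' - B'C' \;=\; (AD - BC) + 2(AC - B^2)\lambda, \]
so if $AC - B^2 \neq 0$, choosing $\lambda \in \Q$ appropriately kills the right-hand side. The symmetric shift $e_1' = e_1 + \mu e_2$, $e_2' = e_2$ produces $(AD - BC) + 2(BD - C^2)\mu$, which takes care of the case $BD - C^2 \neq 0$.

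The main obstacle will be the degenerate case where both $AC - B^2 = 0$ and $BD - C^2 = 0$, so neither shift moves $AD - BC$. Multiplying the two relations gives $ABCD = B^2 C^2$, so either $AD = BC$ already or $BC = 0$; in the latter sub-case a one-line check forces both $B = 0$ and $C = 0$, hence $F = Ax^3 + Dy^3$. When $AD = 0$ this already has the required shape; when $AD \neq 0$, I would apply the mixing change $e_1' = e_1 + e_2$, $e_2' = e_1 - e_2$, which rewrites $F$ as $(A+D)x'^3 + 3(A-D)x'^2 y' + 3(A+D)x'y'^2 + (A-D)y'^3$, whose coefficients trivially satisfy $A'D' = B'C'$. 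This exhausts all cases; reading off $(f_2, \alpha_1, \alpha_2)$ from the final basis via the formulas of the first paragraph then completes the proof.
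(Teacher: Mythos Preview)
Your argument is correct and follows essentially the same route as the paper: both reduce the problem to arranging $F_{111}F_{222}=F_{112}F_{122}$ via a shear $e_2\mapsto e_2+\lambda e_1$, using that the defect changes linearly in $\lambda$ with coefficient $2(F_{111}F_{122}-F_{112}^2)$. The only difference is in the degenerate case $F_{111}F_{122}=F_{112}^2$: the paper first applies the other shear $e_1\mapsto e_1+\lambda e_2$ to break this degeneracy (observing that it can be broken whenever $F_{111}F_{222}\neq F_{112}F_{122}$) and then reapplies the first shear, whereas you treat both shears symmetrically and handle the doubly degenerate case $AC=B^2$, $BD=C^2$ by a direct analysis reducing to $Ax^3+Dy^3$ and an explicit mixing substitution. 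Both approaches are equally elementary; the paper's is marginally shorter since it avoids introducing the third change of basis.
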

\begin{proof}
First we prove that it is possible to find a basis such that $F_{111}F_{222}=F_{112}F_{122}$. 
The change of basis  $\tilde{x}_1=x_1$, $\tilde{x}_2=\lambda x_1 + x_2$ gives
\[\tilde{F}_{111} \tilde{F}_{222} - \tilde{F}_{112} \tilde{F}_{122} = F_{111} F_{222} - F_{112} F_{122}  +\lambda \;2( F_{111} F_{122} - F_{112}^2),\]
where the $\tilde{F}_{ijk}$ are the components with respect to the new basis. This expression vanishes for some $\lambda \in \Q$ if $F_{112}^2 \neq F_{111} F_{122}$. If $F_{112}^2 = F_{111} F_{122}$, then  changing the basis to $\tilde{x}_1=x_1 + \lambda x_2$, $\tilde{x}_2=x_2$ gives
\[\tilde{F}_{111} \tilde{F}_{122} - \tilde{F}_{112}^2= (F_{111} F_{222} - F_{112} F_{122}) \lambda  + (F_{112} F_{222} - F_{122}^2) \lambda^2,\]
so we can arrange $\tilde{F}_{111} \tilde{F}_{122} \neq \tilde{F}_{112}^2$ if the basis doesn't already satisfy $F_{111} F_{222} = F_{112} F_{122}$.

Assume now that $F_{111} F_{222} = F_{112} F_{122}$. If $F=0$, choose $\alpha_1=\alpha_2=0$. If $F\neq 0$, we can assume $F_{122}\neq 0$ or $F_{222} \neq 0$. Then let $\alpha_1=F_{222}$, $\alpha_2=-F_{122}$ and $f_2=-\tfrac{F_{112}}{F_{222}}$ or $f_2=-\tfrac{F_{111}}{F_{122}}$, respectively.
\end{proof}

\begin{lem}\label{LemmaDimension6FormsNotRealized}
If a cubic form $F$ on two-dimensional vector space over $\Q$ is not realized by one of the above models $(\Lambda \tilde{V},d)$ satisfying $(*)$ then it is equivalent to the form associated to $(\Sph^2\times \Sph^4)\#(\Sph^2 \times \Sph^4)$ or $(\Sph^2\times \Sph^4) \# \CP^3$.
\end{lem}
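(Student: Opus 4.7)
Plan: Applying the preceding lemma, I would choose a basis in which $F$ has the parametrized shape $F_{111}=f_2\alpha_2$, $F_{112}=-f_2\alpha_1$, $F_{122}=-\alpha_2$, $F_{222}=\alpha_1$ for some $f_2,\alpha_1,\alpha_2\in\Q$. Since $F$ is by hypothesis not realized by any of the models $(\Lambda\tilde V,d)$ satisfying $(*)$, this parametrization itself must violate $(*)$, i.e.\ $\alpha_2=\pm\sqrt{-f_2}\,\alpha_1$, which on squaring becomes the rational identity $\alpha_2^2+f_2\alpha_1^2=0$.

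I would then read off $F$ directly from this identity. If $\alpha_1=0$, the identity forces $\alpha_2=0$, so every coefficient in the parametrization vanishes and $F\equiv 0$. Otherwise $\alpha_1\neq 0$, and the relation $\alpha_2^2=-f_2\alpha_1^2$ forces the rational number $c:=\alpha_2/\alpha_1$ to satisfy $c^2=-f_2\in\Q$; substituting $\alpha_2=c\alpha_1$ and $f_2=-c^2$ into the parametrized shape and expanding yields
\[
F(x_1,x_2)=-c^3\alpha_1\,x_1^3+3c^2\alpha_1\,x_1^2x_2-3c\alpha_1\,x_1x_2^2+\alpha_1\,x_2^3=\alpha_1\bigl(x_2-cx_1\bigr)^3,
\]
a rational cube of a linear form. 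After scaling and a further $\Q$-change of basis the two possibilities reduce to $F\sim 0$ and $F\sim x_1^3$.

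To conclude I would match these forms with the cohomology rings of the claimed manifolds. The zero form is visibly that of $(\Sph^2\times\Sph^4)\#(\Sph^2\times\Sph^4)$: each $\coh^2$-generator comes from one of the two $\Sph^2$-factors, squares to zero inside its own summand, and cross-multiplies to zero across the connected sum, so every triple cup product in $\coh^6$ vanishes. The form $x_1^3$ is that of $\CP^3\#(\Sph^2\times\Sph^4)$: the $\CP^3$-generator cubes to a generator of $\coh^6$, while the $\Sph^2$-generator squares to zero and is cup-orthogonal to the $\CP^3$-generator. The only mildly delicate point in the argument is ensuring that $c$ actually lies in $\Q$ in the cube case, which is automatic from $\alpha_1,\alpha_2\in\Q$ with $\alpha_1\neq 0$, so I expect no genuine obstacle.
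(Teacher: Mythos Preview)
Your argument is correct and in fact cleaner than the paper's. Both proofs start from the preceding lemma's parametrization $F_{111}=f_2\alpha_2$, $F_{112}=-f_2\alpha_1$, $F_{122}=-\alpha_2$, $F_{222}=\alpha_1$ and use that non-realizability forces the rational identity $\alpha_2^2+f_2\alpha_1^2=0$ (this is exactly the vanishing of the determinant computed in the proof of Lemma~\ref{LemmaVTildeModels}, so your squaring step is on firm ground).

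From there the two arguments diverge. The paper proceeds indirectly: it verifies that the three relations $F_{111}F_{222}=F_{112}F_{122}$, $F_{112}^2=F_{111}F_{122}$, $F_{122}^2=F_{112}F_{222}$ are invariant under arbitrary base change, checks that they hold in the given parametrization, and then---assuming $F\neq 0$ and $F_{222}\neq 0$---applies a specific shear to kill $F_{122}$, whereupon the invariant relations force $F_{111}=F_{112}=0$. Your route is more direct: from $\alpha_1\neq 0$ you extract $c=\alpha_2/\alpha_1\in\Q$ with $f_2=-c^2$, substitute, and recognize the parametrized polynomial as the perfect cube $\alpha_1(x_2-cx_1)^3$. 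This bypasses the invariance computation entirely and makes the geometric content (the form is either zero or a cube of a linear form) transparent. The paper's approach has the minor advantage of identifying coordinate-free relations characterizing such degenerate forms, but for the purpose of this lemma your factorization is both shorter and more illuminating.
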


\begin{proof}
Under a general change of basis $\tilde{x}_1=a x_1 + b x_2$, $\tilde{x}_2=c x_1 + dx_2$ and only assuming $F_{111} F_{222}=F_{112}F_{122}$:
\[\tilde{F}_{111} \tilde{F}_{222} - \tilde{F}_{112}\tilde{F}_{122}=2 (b c - a d)^2 \big(a c ( F_{111} F_{122}-F_{112}^2 ) +  b d (F_{112} F_{222}-F_{122}^2)\big),\]
where, as before, $\tilde{F}_{ijk}$ denote the components with respect to the new basis. So if 
\[F_{111} F_{222}=F_{112}F_{122},\quad F_{112}^2 =F_{122} F_{111}\;\text{ and }\;F_{122}^2= F_{112} F_{222}\]
holds in one basis, it holds in every basis.

By the last lemma and the discussion preceding it, we can assume that a cubic form $F$, which is not realized by one of the above models $(\Lambda \tilde{V},d)$ with $(*)$, satisfies $F_{111}=f_2 \alpha_2$, $F_{112} =-f_2 \alpha_1$,  $F_{122}=-\alpha_2$, $F_{222}=\alpha_1$ and $\alpha_2=\pm \sqrt{-f_2} \alpha_1$. 
Therefore
\[F_{111} F_{222}=F_{112}F_{122},\quad F_{112}^2 =F_{122} F_{111}\;\text{ and }\;F_{122}^2= F_{112} F_{222}.\]
If $F\neq0$, we can assume that $F_{222}\neq 0$. Then the change of basis $\tilde{x}_1=x_1+\lambda x_2$, $\tilde{x}_2=x_2$  with  $\lambda=-\frac{F_{122}}{F_{222}}$, gives $\tilde{F}_{122}= F_{122}+\lambda F_{222}=0$, $\tilde{F}_{222}=F_{222}\neq0$ and with the above relations $\tilde{F}_{111}=\tilde{F}_{112}=0$. Scaling to $\tilde{F}_{222}=1$ this is the form associated to $(\Sph^2\times \Sph^4) \# \CP^3$.

If $F=0$ it is the cubic form associated to $(\Sph^2\times \Sph^4)\#(\Sph^2 \times \Sph^4)$.
\end{proof}
The proof of Theorem~\ref{TheoremDimension6Rational} is now easy.

\begin{proof}[Proof of Theorem~\ref{TheoremDimension6Rational}]
By Lemma~\ref{Lemma6dimensionalExponents} the second Betti number of a closed, simply connected, rationally elliptic 6--manifold $M$ satisfies  $\betti_2(M)\leq3$. 
Note that a manifold satisfying (a), (b) or (c) of Theorem~\ref{TheoremDimension6Rational} is rationally homotopy equivalent to $\Sph^6$, $\Sph^3 \times \Sph^3$, $\CP^3$ or $\Sph^2 \times \Sph^4$. 

Now consider a closed, simply connected, rationally elliptic 6--manifold with $\betti_2=2$. By Lemma~\ref{Lemma6dimensionalExponents} and the discussion preceding Lemma~\ref{LemmaVTildeModels} its minimal model is one of the $(\Lambda \tilde{V},d)$ satisfying $(*)$. Therefore it falls into (d) of the theorem. If on the other hand a closed, simply connected 6--manifold $M$ falling into (d) is given, its minimal model has to be one of  $(\Lambda \tilde{V},d)$ satisfying $(*)$ by Lemma~\ref{LemmaDimension6FormsNotRealized}.

Finally, consider a closed, simply connected, rationally elliptic 6--manifold $M$ with $\betti_2(M)=3$, then its rational minimal model has the form $(\Lambda V,d)=(\Lambda(x_1,x_2,x_3,y_1,y_2,y_3),d)$ with $|x_i|=2$, $|y_j|=3$ and $dx_i=0$. In particular, $(\Lambda V,d)$ is a pure Sullivan algebra with an equal number of even and odd generators. As seen in Section~\ref{subsubsec:PureSullivanAlgebrasRegularSequences}, $\coh^*(M;\Q)=\coh^*(\Lambda V,d)=\Lambda(x_1,x_2,x_3)/(dy_1,dy_2,dy_3)$ and $dy_1, dy_2,dy_3$ is a regular sequence. Thus $M$ falls into case (e). 

If, on the other hand, a manifold falling into case (e) is given, then the minimal model has the above form and the manifold is rationally elliptic.
\end{proof}

\subsection{The real case (proof of Theorems~\ref{TheoremDimension6Realb2leq2} and~\ref{TheoremDimension6Realb2eq3}, and Corollary~\ref{cor:hyperbolicDimension6})}
The main difference in approaching the real case is that binary and ternary real cubic forms have been classified in \cite[Lemmas 3 and 4]{McK}. For the rest of this section we will use the definition of a cubic form as a homogeneous polynomial of degree 3 as it is used there. We will now state the classification of McKay \cite{McK}.

A binary real cubic form is equivalent to exactly one of $0$, $x^3$, $x^2y$, $x^3 +y^3$ and $x^2 y - x y^2$. 

A singular ternary real cubic form is equivalent to exactly one of the following:
\begin{multicols}{2}
\begin{itemize} \item$0,$ \item $ x^3,$ \item $ x^2y,$ \item $ x^2 y - x y^2,$ \item $ x(x^2+y^2),$ \item $ x y z,$ \item $ z(x^2+y^2),$ \item $ x(xz -y^2)$ \item$z(x^2+y^2-z^2),$ \item $ x(x^2+y^2-z^2),$ \item $x(x^2+y^2+z^2),$ \item $ x^3-3y^2 z,$ \item $ x^3+3 x^2 z-3 y^2 z$ \item   and $x^3-3 x^2 z-3 y^2 z$.
\end{itemize}
\end{multicols}

A nonsingular ternary real cubic form is equivalent to exactly one of the forms \[x^3+y^3+z^3+6 \sigma \;x y z\] with $\sigma \neq -\tfrac{1}{2}$.

\begin{lem}\label{LemmaBinaryCubicFormsRealization}
The binary real cubic forms are realized by the following manifolds:
\begin{multicols}{2}
\begin{itemize}
\item $0$: $(\Sph^2\times \Sph^4)\#(\Sph^2 \times \Sph^4)$
\item $x^3$: $(\Sph^2\times \Sph^4)\# \CP^3$
\item $x^2y$: $\CP^2 \times \Sph^2$\columnbreak
\item $x^3 +y^3$:  $\CP^3 \# \CP^3 $
\item $x^2 y - x y^2$: $\SU(3)/ \T^2$
\end{itemize}
\end{multicols}
\end{lem}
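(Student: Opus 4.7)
The plan is to compute, for each of the five manifolds in the list, the cubic form on $\coh^2(\,\cdot\,;\R)$ given by the triple cup product into $\coh^6(\,\cdot\,;\R)\isom\R$, and to verify that the resulting forms are exactly the five equivalence classes of binary real cubic forms in the classification just recalled. For the four connected-sum and product examples the key tool is the standard fact that in a connected sum $M_1 \# M_2$ of closed, oriented, simply connected $n$-manifolds, cohomology classes from different summands cup to zero in positive degrees, while classes from the same summand multiply as in that summand, once the top cohomologies are identified compatibly with orientations.

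Applying this in sequence: for $(\Sph^2\times \Sph^4)\#(\Sph^2\times \Sph^4)$ each $\coh^2$-generator already squares to zero in its $\Sph^2 \times \Sph^4$-summand (the $\coh^2$-class comes from the $\Sph^2$-factor), so every cubic monomial vanishes and the form is $0$. For $(\Sph^2\times \Sph^4)\# \CP^3$, with $\alpha$ the first generator and $h$ the Kähler class of $\CP^3$, one has $\alpha^2=0$, mixed products vanish, and $h^3$ is the fundamental class, so the form is $y^3$, i.e.\ $x^3$. For $\CP^2\times\Sph^2$ with Kähler class $h$ and $\Sph^2$-generator $s$, Künneth gives $h^3=0$, $s^2=0$ and $h^2 s=[M]$, whence $(xh+ys)^3=3x^2y\cdot h^2 s$, i.e.\ $x^2y$ up to a positive scalar. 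For $\CP^3\#\CP^3$ with generators $\omega_1,\omega_2$, the orientation identifications give $\omega_1^3=\omega_2^3=[M]$ while mixed products vanish, producing $x^3+y^3$.

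The substantive computation is for $N=\SU(3)/\T^2$, the complete flag variety of $\C^3$. I would invoke the Borel presentation $\coh^*(N;\R)\isom \R[x_1,x_2,x_3]/(e_1,e_2,e_3)$ with $|x_i|=2$. Eliminating $x_3=-x_1-x_2$ via $e_1=0$, the remaining relations read $x_1^2+x_1x_2+x_2^2=0$ (from $e_2$) and $x_1x_2(x_1+x_2)=0$ (from $e_3$). Multiplying the first by $x_1$ and subtracting the second yields $x_1^3=0$; symmetrically $x_2^3=0$; and $x_1^2x_2=-x_1x_2^2$ is a generator of $\coh^6(N;\R)$ (which is one-dimensional, as a Poincaré series check confirms). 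Expanding $(ax_1+bx_2)^3$ in this quotient then collapses to $3(a^2b-ab^2)\cdot x_1^2 x_2$, so the associated cubic form is $x^2y-xy^2$ up to a positive scalar.

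Since the five forms obtained are pairwise inequivalent and exhaust the list of binary real cubic forms, the lemma follows. The only mildly nontrivial step is the $\SU(3)/\T^2$ case, which hinges on the Borel presentation and a short manipulation in the quotient ring; all remaining cases are routine applications of Künneth and the mixed-product vanishing in connected sums.
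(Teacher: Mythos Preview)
Your proof is correct and follows essentially the same approach as the paper: the paper dismisses the first four cases as ``easy to see'' (which you spell out via K\"unneth and connected-sum multiplication), and for $\SU(3)/\T^2$ it cites Borel's two-variable presentation $\Lambda(x_1,x_2)/(x_1^2+x_1x_2+x_2^2,\,x_1^2x_2+x_1x_2^2)$ directly, which you instead derive by eliminating $x_3$ from the three-variable symmetric-function presentation, arriving at the same relations and the same conclusion $x_1^3=x_2^3=0$, $x_1^2x_2=-x_1x_2^2$.
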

\begin{proof}
The first four are easy to see. The cohomology ring of  $\SU(3)/\T^2$ has been calculated in \cite{Borel53} and is \[\coh^*(\SU(3)/\T^2;\R)=\Lambda(x_1,x_2)/(x_1^2 +x_1 x_2 + x_2^2, x_1^2 x_2+x_1 x_2^2)\] with $|x_i|=2$. Therefore $x_1^3=x_2^3=0$ and $x_1^2 x_2=-x_1 x_2^2$. So the associated cubic form is as stated. 
\end{proof}

Of these manifolds, $\CP^2 \times \Sph^2$, $\CP^3 \# \CP^3 $ and $\SU(3)/ \T^2$ are rationally elliptic, since they have their rational cohomology ring generated by $\coh^2$. Since the closed, simply connected, rationally elliptic 6--manifolds with second Betti number $\betti_2\leq 1$ have been identified before, this already proves Theorem~\ref{TheoremDimension6Realb2leq2}.

Corollary~\ref{cor:hyperbolicDimension6} also follows from this, since we have seen that every compact, simply connected 6-manifold $M$ with $\betti_2(M)\leq1$ and $\betti_3(M)=0$ is rationally elliptic.

For the proof of Theorem~\ref{TheoremDimension6Realb2eq3}, we start with the following models. For $\lambda\neq1$ let
\[(\Lambda V,d_\lambda) = (\Lambda(x_1,x_2,x_3,y_1,y_2,y_3),d_\lambda)\]
with
$ |x_i|=2$, $|y_j|=3$, $dx_i=0$ and $d_\lambda y_j=x_j^2- \lambda \frac{x_1 x_2 x_3}{x_j}$ for $i,j=1,2,3$.

Let $u_j=x_j^2- \lambda \frac{x_1 x_2 x_3}{x_j}\in\R[x_1,x_2,x_3]$ and suppose there is a $z \in \C^3\setminus\{(0,0,0)\}$ with $u_i(z)=0$ for $i=1,2,3$. Since $z_1^2 =\lambda z_2 z_3$, $z_2^2=\lambda z_1z_3$ and $z_3^2=\lambda z_1 z_2$, we have  $z_i\neq 0$ for $i=1,2,3$. Then $z_1^4=\lambda^2 z_2^2 z_3^2=\lambda ^4 z_1^2 z_2 z_3$, so $\lambda^4 z_2 z_3=z_1^2 = \lambda z_2 z_3$. Therefore $\lambda=1$, which we excluded. So $(0,0,0)$ is the only common zero of $u_1$, $u_2$ and $u_3$ in $\C^3$.  By Hilbert's Nullstellensatz, $\R[x_1,x_2,x_3]/(u_1,u_2,u_3)$ is finite dimensional. By \cite[Propositions 32.1, 32.2 and 32.3]{FHT}, $u_1,u_2,u_3$ is a regular sequence, $(\Lambda V,d_\lambda)$ is rationally elliptic, of formal dimension 6 due to its exponents and its cohomology ring is $\coh^*(\Lambda V,d_\lambda)\isom\R[x_1,x_2,x_3]/(u_1,u_2,u_3)$. 

The cubic form associated to $(\Lambda V,d_\lambda)$ is  $x^3+y^3+z^3+6 \tfrac{1}{\lambda} \;x y z$ if $\lambda \neq 0$ and $xyz$ if $\lambda =0$. So if a closed, simply connected 6--manifold with $\betti_3=0$ has one of these forms associated to it,  it is rationally elliptic. As the models $(\Lambda V,d_\lambda)$ with $\lambda \in \Q\setminus \{1\}$ can obviously be defined over the rational numbers, they can be realized as minimal models of a closed, simply connected 6--manifold and we get the following.
\begin{prop}\label{prop:InfinitelymanyrealHomotopytypesDimension6}
There are infinitely many real homotopy types of closed, simply connected, rationally elliptic 6--manifolds.
\end{prop}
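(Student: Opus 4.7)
The plan is to take the family of minimal Sullivan algebras $(\Lambda V,d_\lambda)$ constructed just above, realize those with rational parameters as $6$-manifolds, and then use McKay's classification of ternary real cubic forms to exhibit infinitely many distinct real cohomology rings.

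First I would restrict to $\lambda \in \Q \setminus \{1\}$, so that $(\Lambda V,d_\lambda)$ is defined over $\Q$. As already established in the paragraph preceding the statement, this is a simply connected (since $V^1=0$), rationally elliptic minimal Sullivan algebra of formal dimension $6$; by Halperin's theorem its cohomology satisfies Poincaré duality, and because $6$ is not divisible by $4$, the realization result of Section~\ref{subsubsec:RealizationManifold} applies with no further obstruction. This produces, for every such $\lambda$, a closed, simply connected, rationally elliptic $6$-manifold $M_\lambda$ whose rational minimal model is $(\Lambda V,d_\lambda)$.

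Next I would invoke Miller's formality theorem (recalled in Section~\ref{sec:Prelim}) to conclude that each $M_\lambda$ is formal, so its real homotopy type is determined by the isomorphism class of $\coh^*(M_\lambda;\R)$, which in the present setting is in turn determined by the $\R$-equivalence class of the associated ternary cubic form. Those forms were computed immediately before the proposition: $xyz$ when $\lambda=0$, and $x^3+y^3+z^3+6\lambda^{-1}xyz$ when $\lambda\in \Q\setminus\{0,1\}$.

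Finally I would appeal to McKay's classification, which asserts that the nonsingular ternary real cubic forms $x^3+y^3+z^3+6\sigma\, xyz$ with $\sigma \neq -\tfrac{1}{2}$ represent pairwise distinct equivalence classes. As $\lambda$ varies over $\Q\setminus\{0,1,-2\}$, the parameter $\sigma=\lambda^{-1}$ attains infinitely many distinct values in $\R\setminus\{-\tfrac{1}{2}\}$, so the $M_\lambda$ realize infinitely many pairwise distinct real homotopy types. There is no serious obstacle here: the realization step is automatic in dimension $6$ because there is no signature condition, and the infinitude of non-equivalent cubic forms is a direct consequence of McKay's list combined with the formula already computed. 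The substantive work — regularity of $u_1,u_2,u_3$ via Hilbert's Nullstellensatz, finite-dimensionality of $\coh^*(\Lambda V,d_\lambda)$, and the cubic form computation — has already been carried out above.
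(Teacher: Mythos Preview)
Your proposal is correct and follows essentially the same approach as the paper: realize the rational models $(\Lambda V,d_\lambda)$ for $\lambda\in\Q\setminus\{1\}$ as manifolds via Sullivan's theorem, then invoke the already-computed cubic forms together with McKay's classification to conclude that the resulting real homotopy types are pairwise distinct. You have simply made explicit the steps (formality, the exclusion $\lambda\neq -2$, the appeal to McKay) that the paper leaves implicit in the paragraph preceding the proposition.
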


For the remaining cubic forms we can use the same trick. Given a cubic form, we can associate the subspace of the homogenous polynomials of degree 2 in $\R[x_1,x_2,x_3]$ which vanish in the associated cohomology ring $\coh^*(M;\R)$ of some closed, simply connected 6--manifold. To do this, one uses  that such a polynomial $f$ vanishes in the cohomology if and only if $x_1 f$, $x_2 f$ and $x_3f$ vanish in cohomology, which can be seen using the cubic form. If we take for example the cubic form $x^3+y^3+z^3$ (belonging to $\CP^3\#\CP^3\#\CP^3$) we left out above, the associated subspace is generated by $x_1x_2$, $x_1x_3$ and $x_2 x_3$, which is not a regular sequence, since $x_1x_3$ is a zero divisor in $\R[x_1,x_2,x_3]/(x_1x_2)$. Therefore $\CP^3\#\CP^3\#\CP^3$ is not rationally elliptic.

The other nonsingular ternary cubic form we left out, $x^3+y^3+z^3+6 x y z$, is not regular, since  
\[x_2 (x_3^2-x_1x_2)=-x_3(x_1^2-x_2x_3)-x_1(x_2^2-x_1x_3),\]
so $x_3^2-x_1x_2$ is a zero divisor in  $\R[x_1,x_2,x_3]/(x_1^2-x_2x_3,x_2^2-x_1x_3)$

The proof of Theorem~\ref{TheoremDimension6Realb2eq3} will be completed by the following lemma.
\begin{table}[!htbp]\begin{center}\caption{\label{TableTernaryCubicForms}Ternary real cubic forms and associated sequence of homogenous polynomials of degree two}

\begin{tabular}{p{3.5cm}p{5.6cm}l}\toprule
cubic form& sequence  & regular\\
\midrule
$0$& $x_1^2$,\, $x_2^2$,\, $x_3^2$,\, $x_1 x_2$,\, $x_1 x_3$,\, $x_2 x_3$&no \\ \mytableextraspace
$x^3$& $x_2^2$,\, $x_3^2$,\, $x_1 x_2$,\, $x_1 x_3$,\, $x_2 x_3$  &  no \\ \mytableextraspace
$x^2y$ &$x_2^2$,\, $x_1 x_3$,\, $x_2 x_3$,\, $x_3^2$&  no\\ \mytableextraspace
$x^2 y - x y^2$ & $x_1^2+x_1 x_2 + x_2^2$,\, $x_1 x_3$,\, $x_2 x_3$,\, $x_3^2 $ & no\\ \mytableextraspace
$x(x^2+y^2)\sim  x^3+y^3 $ & $x_1x_2$,\, $x_1x_3$,\, $x_2 x_3$,\, $x_3^2$& no\\ \mytableextraspace
$x y z$ & $x_1^2$,\, $x_2^2$,\, $x_3^2$ & yes\\ \mytableextraspace
$z(x^2+y^2)$ & $x_1 x_2$,\, $x_1^2-x_2^2$,\, $x_3^2$ & yes\\ \mytableextraspace
$x(xz -y^2)$ & $x_2^2+x_1 x_3$,\, $x_3^2$,\, $x_2 x_3$ & no\\ \mytableextraspace
{\raggedright $z(x^2+y^2-z^2)$\par $\sim z (3 x^2 + 3 y^2-z^2)$ }&$x_1x_2$,\, $x_1^2+x_3^2$,\, $x_2^2 + x_3^2$  & yes\\ \mytableextraspace
{\raggedright $x(x^2+y^2-z^2)$\par$\sim x(x^2+3 y^2 - 3 z^2)$} & $x_2 x_3$,\, $x_1^2-x_2^2$,\, $x_1^2+x_3^2$ & yes\\ \mytableextraspace
{\raggedright $x(x^2+y^2+z^2)$\par$\sim x(x^2+3 y^2 + 3 z^2)$} &$x_2 x_3$,\, $x_1^2-x_2^2$,\, $x_1^2-x_3^2$&  yes\\ \mytableextraspace
$x^3-3 y^2 z $ & $x_1 x_2$,\, $x_1 x_3$,\, $x_3^2$ & no\\ \mytableextraspace
$x^3+3x^2 z-3 y^2 z$ & $x_1 x_2$,\, $x_3^2$,\, $x_1^2 - x_1 x_3 + x_2^2$ & yes\\ \mytableextraspace
$x^3-3 x^2 z-3y^2 z$ &$x_1 x_2$,\, $x_3^2$,\, $x_1^2 + x_1 x_3 - x_2^2$  & yes\\ \mytableextraspace
$x^3+y^3+z^3+6 \sigma x y z$, $\sigma \neq-\frac{1}{2}$&$\sigma x_1^2-x_2 x_3$,\, $\sigma x_2^2-x_1 x_3$,\, $\sigma x_3^2 - x_1 x_2$&\\ 
\bottomrule
\end{tabular}\end{center}\end{table}

\begin{lem}The subspaces associated to the cubic forms $x y z$, $z(x^2+y^2)$, $z(x^2+y^2-z^2)$, $x(x^2+y^2-z^2)$, $x(x^2+y^2+z^2)$, $x^3+3x^2 z-3 y^2 z$, $x^3-3 x^2 z-3y^2 z$, and $x^3+y^3+z^3+6 \sigma x y z$ for $\sigma \not\in\{0,1,-\frac{1}{2}\}$ are generated by a regular sequence, while the ones associated to $0$, $x^3$, $x^2y$, $x^2 y - x y^2$, $x(x^2+y^2)$, $x(xz -y^2)$,  $x^3-3 y^2 z $  and $x^3+y^3+z^3+6 \sigma x y z$ for $\sigma\in \{0,1\}$ are not generated by a regular sequence.
\end{lem}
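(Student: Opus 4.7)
The plan is to go through the rows of Table~\ref{TableTernaryCubicForms} one at a time. The first observation is that $\R[x_1,x_2,x_3]$ has Krull dimension $3$, so any regular sequence of homogeneous elements of positive degree has length at most $3$. Hence no basis of a subspace of dimension greater than $3$ can be a regular sequence, which immediately settles the five rows whose associated subspace is generated by more than three polynomials, namely the forms $0$, $x^3$, $x^2y$, $x^2y-xy^2$ and $x(x^2+y^2)$.

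For the remaining rows the subspace has dimension exactly $3$, and I would apply the same criterion used in the paragraph preceding Proposition~\ref{prop:InfinitelymanyrealHomotopytypesDimension6}: three homogeneous polynomials $f_1,f_2,f_3$ of positive degree in $\R[x_1,x_2,x_3]$ form a regular sequence if and only if their only common zero in $\C^3$ is the origin, by Hilbert's Nullstellensatz together with \cite[Propositions 32.1, 32.2 and 32.3]{FHT}. For each of the non-regular rows of this type I would simply exhibit an explicit nonzero common zero: $(1,0,0)$ for $x(xz-y^2)$ (with sequence $x_2^2+x_1x_3,\,x_3^2,\,x_2x_3$), $(0,1,0)$ for $x^3-3y^2z$ (with sequence $x_1x_2,\,x_1x_3,\,x_3^2$), and for the family $\sigma x_i^2-x_jx_k$ the zeros $(1,0,0)$ at $\sigma=0$ and $(1,1,1)$ at $\sigma=1$.

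For the seven rows claimed regular I would verify by short case distinctions that the only common zero in $\C^3$ is $(0,0,0)$. In each case one generator (of the form $x_3^2$, $x_1x_2$ or $x_2x_3$) forces either a single coordinate or a product of two coordinates to vanish, after which the other two generators collapse the remaining coordinates to zero; these checks are entirely elementary. The only moderately substantive case is the family $\sigma x_1^2-x_2x_3,\;\sigma x_2^2-x_1x_3,\;\sigma x_3^2-x_1x_2$ for $\sigma\in\R\setminus\{0,1,-\tfrac12\}$: a common zero with some coordinate zero must have all coordinates zero (using $\sigma\ne0$), while on the open set where every coordinate is nonzero multiplying the three relations $\sigma z_i^2=z_jz_k$ gives $\sigma^3(z_1z_2z_3)^2=(z_1z_2z_3)^2$, hence $\sigma^3=1$, which over $\R$ forces $\sigma=1$ and contradicts the hypothesis. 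This is precisely the calculation already given before Proposition~\ref{prop:InfinitelymanyrealHomotopytypesDimension6} with $\lambda=\sigma^{-1}$, so no genuine new obstacle appears; the lemma reduces to a routine case-by-case verification.
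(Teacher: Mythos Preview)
Your proposal is correct and follows essentially the same strategy as the paper: a case-by-case check using the Nullstellensatz criterion for regularity. The only differences are cosmetic---where the paper dispatches the non-regular cases by exhibiting explicit zero-divisor relations (e.g.\ $x_ix_j\cdot x_k\in(x_ix_k)$), you instead invoke the Krull-dimension bound for the large subspaces and exhibit explicit nonzero common zeros for the three-dimensional ones, which is the geometric translation of the same fact.
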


\begin{proof}
Bases for the associated subspaces are given in Table~\ref{TableTernaryCubicForms}. The regularity of the sequences associated to $x y z$, $z(x^2+y^2)$, $z(x^2+y^2-z^2)$, $x(x^2+y^2-z^2)$, $x(x^2+y^2+z^2)$, $x^3+3x^2 z-3 y^2 z$, $x^3-3 x^2 z-3y^2 z$, and $x^3+y^3+z^3+6 \sigma x y z$ for $\sigma \not\in\{0,1,-\frac{1}{2}\}$ is seen using the application of Hilbert's Nullstellensatz already used in the discussion following Lemma~\ref{LemmaBinaryCubicFormsRealization}. Except for the ones associated to $x(xz -y^2)$ and $x^3+y^3+z^3+6 \sigma x y z$ with $\sigma \in \{0,1\}$, all non-regular sequences contain two elements of the form $x_i x_j$ and $ x_i x_k$ with $\{i,j,k\}=\{1,2,3\}$. These are non-regular, since $x_ix_j \cdot x_k \in (x_i x_k)$. For $x(xz -y^2)$, the two elements $x_3^2$ and $x_2 x_3$ allow a similar construction and the last case has been treated above.
 \end{proof}

\begin{table}[!htb]\begin{center}\caption{\label{TableCubicFormsExamples}Ternary real cubic forms and examples of manifolds with cohomology ring having the cubic form associated to it}

\begin{tabular}{p{3.2cm}p{5.3cm}p{2.2cm}}\toprule
cubic form &example  & rationally\\
\midrule
$0$& $(\Sph^2 \times \Sph^4) \#(\Sph^2 \times \Sph^4) \#(\Sph^2 \times \Sph^4 )$&hyperbolic \\\mytableextraspace
$x^3$&$\CP^3 \# (\Sph^2 \times \Sph^4) \#(\Sph^2 \times \Sph^4)$&  hyperbolic \\\mytableextraspace
$x^2y$ & $(\CP^2 \times \Sph^2)\# (\Sph^2 \times \Sph^4) $&  hyperbolic\\\mytableextraspace
$x^2 y - x y^2$ & $(\SU(3)/ \T^2)\# (\Sph^2 \times \Sph^4) $& hyperbolic\\\mytableextraspace
$x(x^2+y^2)$ & $\CP^3 \# \CP^3 \#(\Sph^2 \times \Sph^4) $& hyperbolic\\\mytableextraspace
$x y z$ & $\Sph^2 \times\Sph^2 \times\Sph^2 $, $(\CP^2 \# \overline{\CP}^2) \times  \Sph^2 $ & elliptic\\\mytableextraspace
$z(x^2+y^2)$ & $(\CP^2 \# \CP^2) \times  \Sph^2 $ & elliptic\\\mytableextraspace
$x(xz -y^2)$ & $ $ & hyperbolic\\\mytableextraspace
$z(x^2+y^2-z^2)$ & & elliptic\\\mytableextraspace
$x(x^2+y^2-z^2)$ & $B^3_{b_1,c_1,c_2}$ with $c_2\neq 0$, $c_1\neq \frac{b_1c_2}{2}$&  elliptic\\\mytableextraspace
$x(x^2+y^2+z^2)$ & $B^1_{c_1,c_2} $, with $(c_1,c_2)\neq(0,0)$ &   elliptic\\\mytableextraspace
$x^3-3 y^2 z $ & $\CP^3 \# (\CP^2 \times \Sph^2) $ & hyperbolic\\\mytableextraspace
$x^3+3x^2 z-3 y^2 z$ & $ $ &  elliptic \\\mytableextraspace
$x^3-3 x^2 z-3y^2 z$ &$B^2_{0,b_3} $ with $b_3\neq 0$&  elliptic\\\mytableextraspace
$x^3+y^3+z^3+6 \sigma x y z$, $\sigma \neq-\frac{1}{2},0,1$&$B^\text{sp}$&elliptic\\\mytableextraspace
$x^3+y^3+z^3+6 \sigma x y z$, $\sigma \in\{0,1\}$&$\CP^3\#\CP^3\#\CP^3$&hyperbolic\\
\bottomrule
\end{tabular}\end{center}\end{table}

In some of these cases we can give examples of manifolds which have these cubic forms, see Table~\ref{TableCubicFormsExamples}. Most of these are easy to see. We concentrate on the manifolds $B^1_{c_1,c_2}$, $B^2_{a_3,b_3}$ and $B^3_{b_1,c_1,c_2}$. They are certain biquotients that have been studied by DeVito \cite{DeV,DeVitoBiqu67}. They are given as quotients of $\Sph^3\times \Sph^3 \times \Sph^3$ by a $\T^3$-action. The general form of the actions is given by
\begin{align*}
&(u,v,w).((p_1,p_2),(q_1,q_2),(r_1,r_2))\\
&\qquad \qquad=((u p_1,u^{a_1} v^{a_2} w^{a_3} p_2), (u q_1,u^{b_1} v^{b_2} w^{b_3} q_2), (u r_1,u^{c_1} v^{c_2} w^{c_3} r_2)).
\end{align*}
where $(u,v,w)\in \T^3$ and $((p_1,p_2),(q_1,q_2),(r_1,r_2))\in (\Sph^3)^3\subset (\C^2)^3$. The action is determined by the matrix $\left(\begin{smallmatrix}a_1&a_2&a_3\\b_1&b_2&b_3\\c_1&c_2&c_3\\\end{smallmatrix}\right)\in \Z^{3 \times 3}$. The biquotients $B^1_{c_1,c_2}$, $B^2_{a_3,b_3}$ and $B^3_{b_1,c_1,c_2}$ are given by the matrices
\[\begin{pmatrix}1&2&0\\1&1&0\\c_1&c_2&1\\\end{pmatrix},\begin{pmatrix}1&2&a_3\\1&1&b_3\\0&0&1\\\end{pmatrix}\text{ and }\begin{pmatrix}1&0&0\\b_1&1&0\\c_1&c_2&1\\\end{pmatrix}.\]
The manifold $B^\text{sp}$ also is a biquotient of this form, the first of the sporadic examples in \cite{DeVitoBiqu67} with action determined by the matrix \[ \begin{pmatrix}1&2&2\\1&1&2\\1&1&1\\\end{pmatrix}.\] 
Their cohomology rings have been computed in \cite[Proposition 4.9]{DeVitoBiqu67}:{\medmuskip=2.5mu
\begin{align*}
\coh^*(B^1_{c_1,c_2};\Z) &\isom \Z[u,v,w]/(u^2 + 2 u v, v^2+ u v, w^2+ c_1 u w + c_2 v w),\\
\coh^*(B^2_{a_3,b_3};\Z)& \isom \Z[u,v,w]/(u^2 + 2 u v + a_3 u w, v^2+ u v + b_3 v w, w^2),\\ 
\coh^*(B^3_{b_1,c_1,c_2};\Z)& \isom \Z[u,v,w]/(u^2, v^2+b_1 u v, w^2+ c_1 u w + c_2 v w),\\
\coh^*(B^{\text{sp}};\Z) &\isom \Z[u,v,w]/(u^2+2uv+2uw, v^2+ u v+2vw, w^2+uw+vw)
\end{align*}
with $u,v,w$ of degree 2.}

For some of these biquotients we will now compute the cubic form, associated to their cohomology rings.

Consider first $B^1_{c_1,c_2}$ with $(c_1,c_2)\neq (0,0)$. Let $\alpha=\sqrt{c_2^2+(2 c_1-c_2)^2}\neq0$ and $x_1,x_2, x_3$ be the basis of $\coh^2(B^1_{c_1,c_2};\R)$ with $u=-2 x_3$, $v=x_2+x_3$ and $w=-\tfrac{\alpha}{2} x_1 - \tfrac{c_2}{2} x_2 + (c_1-\tfrac{c_2}{2}) x_3$. Then
\begin{align*}u^2 + 2 u v&= -4 x_2 x_3\\
v^2+u v&= - (x_1^2-x_2^2) +(x_1^2-x_3^2)\\
w^2 + c_1 u w+ c_2 v w&= \tfrac{c_2^2}{4} \;(x_1^2-x_2^2) +\tfrac{1}{4} (2c_1-c_2)^2\;(x_1^2-x_3^2)\\ &\qquad+\big(c_1 c_2 - \tfrac{c_2^2}{2}\big)\; x_2 x_3,
\end{align*}
which spans the same subspace of $(\R[x_1,x_2,x_3])^2$ as $ x_2x_3,x_1^2-x_2^2,x_1^2-x_3^2$, the sequence associated to $x(x^2+y^2+z^2)$, see Table~\ref{TableTernaryCubicForms}. Therefore $B^1_{c_1,c_2}$ has $x(x^2+y^2+z^2)$ as associated cubic form.

Next consider $B^2_{0,b_3}$ with $b_3\neq 0$. Let $x_1,x_2,x_3$ be the basis of $\coh^2(B^2_{0,b_3};\R)$ with $u=-\frac{b_3^{1/3} }{2^{2/3}}(2 x_1-2 x_2+x_3)$, $v=-2^{1/3} b_3^{1/3} x_2$ and $w=\frac{1}{2^{2/3} b_3^{2/3}}x_3$. Then
\begin{align*}
u^2 + 2 u v&=\frac{b_3^{2/3}}{2^{4/3}}\; x_3^2+2^{2/3} b_3^{2/3}\;(x_1^2 + x_1 x_3 - x_2^2),
\\v^2+ u v + b_3 v w&=2^{2/3}b_3^{2/3} \;x_1 x_2,
\\w^2&=\left(\tfrac{1}{2^{2/3} b_3^{2/3}}\right)^2 \;x_3^2.
\end{align*}
Consulting Table~\ref{TableTernaryCubicForms} shows that the associated cubic form is $x^3-3 x^2 z-3y^2 z$.

Now consider  $B^3_{b_1,c_1,c_2}$ with  $c_2\neq 0$ and $2c_1 \neq b_1 c_2$. Let $x_1,x_2,x_3$ be the basis of $\coh^2(B^3_{b_1,c_1,c_2};\R)$ with $u=c_2(x_2-x_3)$, $v=(c_1-b_1c_2)x_2 + c_1 x_3$ and $w=\tfrac{1}{2} c_2 (b_1c_2-2 c_1)(x_1+x_2)$. Then
\begin{align*}
u^2 &= -c_2^2 (2 f_1+f_2 - f_3)\\
v^2+u v&= (2c_1^2-2b_1 c_1 c_2 +b_1^2 c_2^2)f_1 + (c_1^2-b_1c_1c_2) (-f_2+f_3)\\
w^2 + c_1 u w+ c_2 v w&= \tfrac{1}{4}c_2^2 (b_1c_2-2c_1)^2 f_2,
\end{align*}
with $f_1=x_2 x_3$, $f_2=x_1^2-x_2^3$ and $f_3=x_1^2+x_3^2$. It follows that $B^3_{b_1,c_1,c_2}$ realizes the cubic form $x(x^2+y^2-z^2)$ by again consulting Table~\ref{TableTernaryCubicForms}.

In  $\coh^*(B^\text{sp})$, we have that $u^2v=-2 u v w$, $u^2 w= 0$, $uv^2=0$, $uw^2=-u v w$, $v^2 w=-uvw$, $vw^2=0$, $u^3=4uvw$, $v^3=2uvw$ and $w^3=uvw$. Hence,  the cubic form associated to the cohomology ring of $B^\text{sp}$ is 
\[4x^3+2y^3+z^3-6x^2y-3x z^2-3y^2 z+6xyz.\]
Computing the gradient, it is easy to see, that this form is nonsingular. So for some $\sigma$ it is equivalent to the form $x^3+y^3+z^3+6 \sigma x y z$. A numerical computation shows, that  $\sigma \approx 0.27788$ for $B^\text{sp}$.

\begin{rem}[The complex case]
 The normal forms of complex ternary cubic forms can be found for example in \cite[Section~I.7]{Kraft} or \cite[Section~7.3]{PlaneAlgCurves}. In particular every nonsingular cubic form can be brought to the Hesse normal form $C_\lambda=x^3+y^3+z^3+\lambda x y z$ with $\lambda \in \C$, $\lambda^3\neq 27$. For a given $\lambda $ there are only finitely many $\lambda'$ such that $C_\lambda$ and $C_{\lambda'}$ are equivalent, see \cite[Section~7.3, Theorem~10]{PlaneAlgCurves}. Therefore the results can easily be adapted to the complex case, in particular Proposition~\ref{prop:InfinitelymanyrealHomotopytypesDimension6} still holds for complex homotopy types.
\end{rem}

\section{Seven-dimensional manifolds}\label{sec:DimensionSeven}

As in the six-dimensional case we start with the computation of the possible exponents using the results of Friedlander and Halperin given in Section \ref{susubsec:Exponents}.
\begin{lem}\label{Lemma7dimensionalExponents}
A closed, simply connected, rationally elliptic 7--manifold has one of the following exponents:
\begin{multicols}{2}
\begin{enumerate}
\item[(7.1)]$a=(~)$, $b=(4)$
\item[(7.2)]$a=(1)$, $b=(2,3)$
\item[(7.3)]$a=(2)$, $b=(2,4)$
\item[(7.4)]$a=(1,1)$, $b=(2,2,2)$
\end{enumerate}
\end{multicols}
\end{lem}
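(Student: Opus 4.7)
The plan is a straightforward finite enumeration using the four numerical constraints (a)--(d) from Section~\ref{susubsec:Exponents} together with (SAC), specialized to $n=7$.

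First I would extract a priori bounds on the sizes $q=\dim V^{\text{even}}$ and $r=\dim V^{\text{odd}}$. Constraint (b) gives $\sum a_i \leq 7/2$, and since every $a_i\geq 1$, this forces $q\leq 3$. Constraint (c) together with $b_j\geq 2$ (so $2b_j-1\geq 3$) yields $r\leq 13/3$, i.e.\ $r\leq 4$. Reading (d) modulo 2 with $n=7$ odd forces $r-q$ to be odd, and combined with (a) this restricts the pair $(q,r)$ to lie in the finite list $(0,1),(0,3),(1,2),(1,4),(2,3),(2,5),(3,4),(3,6)$; the latter three with $r\geq 5$ can be ruled out immediately because $\sum b_j\geq 2r$ forces $\sum a_i$ to exceed its bound of $3$ via (d).

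Next I would handle each remaining pair $(q,r)$ in turn. For each admissible tuple $a\in \N^q$ (finitely many because of the bound $\sum a_i\leq 3$), solve (d) for the total $\sum b_j$ and enumerate the partitions of that total into $r$ parts each $\geq 2$; then discard any that violate (c) or (SAC). Concretely:
\begin{itemize}
\item $(q,r)=(0,1)$: $b_1=4$, giving (7.1).
\item $(q,r)=(0,3)$: would need $\sum b_j=5$ with three parts $\geq 2$, impossible.
\item $(q,r)=(1,2)$, $a_1=1$: $b_1+b_2=5$ forces $(2,3)$, yielding (7.2); $a_1=2$: $b_1+b_2=6$ and (SAC) for $i_1=1$ demands some $b_{j_1}$ divisible by $2$ with quotient $\geq 2$, so $b_{j_1}\geq 4$, leaving only $(2,4)$ and giving (7.3); $a_1=3$: analogously (SAC) would force some $b_j\geq 6$, impossible within $b_1+b_2=7$.
\item $(q,r)=(1,4)$ and $(q,r)=(3,4)$: minimality $\sum b_j\geq 2r$ forces $a_1\geq 3$ respectively $\sum a_i\geq 3$ together with $b_j=2$ throughout, but then (SAC) fails (no $b_{j_1}$ divisible by $a_1=3$ with quotient $\geq 2$), or the sum conditions themselves contradict each other.
\item $(q,r)=(2,3)$, $(a_1,a_2)=(1,1)$: $\sum b_j=6$ forces $(2,2,2)$, and (SAC) is readily verified for $s=1,2$; this gives (7.4); $(a_1,a_2)=(1,2)$: $\sum b_j=7$ forces $(2,2,3)$, but (SAC) with $i_1=2$ requires $b_{j_1}\geq 4$, ruling this out.
\end{itemize}

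The arithmetic is mechanical; the only step requiring any care is the (SAC) verification, in particular checking that the subcase $(a_1,a_2)=(1,1)$, $b=(2,2,2)$ really satisfies (SAC) for $s=2$ (which it does, taking $\gamma_{k1}+\gamma_{k2}=2$ for both $k$). Since $b_j=2a_1=2a_2$, this is immediate. I do not anticipate any genuine obstacle beyond keeping the case analysis organized.
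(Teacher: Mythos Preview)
Your proposal is correct and follows exactly the approach the paper indicates: the paper does not spell out a proof of this lemma, merely pointing to the Friedlander--Halperin constraints in Section~\ref{susubsec:Exponents}, and your enumeration is precisely the intended computation. One small expositional slip: the phrase ``the latter three with $r\geq 5$'' is inaccurate since $(3,4)$ has $r=4$, but your bounding argument via (d) nonetheless applies to all three of $(2,5),(3,4),(3,6)$, so the mathematics is fine.
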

Again, most exponents allow only finitely many rational homotopy types.
\begin{lem}\label{Lemma7dimensionalFinitelyManyExamples}
A closed, simply connected, rationally elliptic 7--manifold with exponents like in
\begin{itemize}
\item (7.1) is rationally homotopy equivalent to $\Sph^7$;
\item (7.2) is rationally homotopy equivalent to $\Sph^2 \times \Sph^5$ or $\CP^2 \times \Sph^3$;
\item (7.3) is rationally homotopy equivalent to $\Sph^3\times\Sph^4$.
\end{itemize}
\end{lem}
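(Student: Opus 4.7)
The plan is to handle each of the three exponent cases in turn. In each one, the generators of the minimal Sullivan model $(\Lambda V,d)$ are determined by the exponents; minimality (so that $dV\subset\Lambda^{\geq 2}V$) together with degree considerations then pins down $d$ up to rescalings of the generators and an overall finite-dimensionality constraint coming from ellipticity.

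Case (7.1) is immediate: $V=V^7=\langle y_1\rangle$ and $(\Lambda^{\geq 2}V)^8=0$, so $dy_1=0$ and one recovers the minimal model of $\Sph^7$. Case (7.3) has $|x|=4$, $|y_1|=3$, $|y_2|=7$; there are no decomposables of degree $4$, forcing $dy_1=0$, and the only decomposable of degree $8$ is $x^2$, so $dy_2=c\,x^2$ for some $c\in\Q$. Ellipticity rules out $c=0$, and rescaling $y_2$ produces the minimal model of $\Sph^3\times\Sph^4$.

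Case (7.2) is the interesting one, as it produces two rational homotopy types. Here $|x|=2$, $|y_1|=3$, $|y_2|=5$. The decomposables of degree $4$ are spanned by $x^2$ and those of degree $6$ by $x^3$ — crucially, $xy_1$ has degree $5$, not $6$, so cannot contribute to $dy_2$, and $y_1^2=0$. Thus $dy_1=c_1 x^2$ and $dy_2=c_2 x^3$. I would then split on $c_1$. If $c_1\neq 0$, rescale $y_1$ so that $dy_1=x^2$, and observe that $d(xy_1)=x\cdot dy_1=x^3$; hence the substitution $\tilde y_2=y_2-c_2\,xy_1$ gives a cocycle, and the resulting model is that of $\Sph^2\times\Sph^5$, with cohomology $\Q[x]/(x^2)\otimes\Lambda(\tilde y_2)$. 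If $c_1=0$, then $y_1$ is already a cocycle; finite-dimensionality of the cohomology forces $c_2\neq 0$, and after rescaling $y_2$ one obtains the minimal model of $\CP^2\times\Sph^3$, with cohomology $\Q[x]/(x^3)\otimes\Lambda(y_1)$. The two subcases yield genuinely different rational homotopy types, as is seen at once from their Betti numbers.

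The only real subtlety is the degree bookkeeping in case (7.2), specifically the parity/degree check that no $xy_1$ term can appear in $dy_2$. Everything else is a routine case analysis, and no separate realization step is needed because the named manifolds $\Sph^7$, $\Sph^2\times\Sph^5$, $\CP^2\times\Sph^3$, and $\Sph^3\times\Sph^4$ realize exactly these minimal models.
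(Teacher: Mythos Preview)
Your proof is correct and follows essentially the same approach as the paper. The only organizational difference is that in case (7.2) the paper lists the three nontrivial differentials $(dy_3,dy_5)\in\{(x^2,0),(0,x^3),(x^2,x^3)\}$ separately and then exhibits the isomorphism $y_5\mapsto y_5-xy_3$ identifying the third with the first, whereas you split first on whether $c_1$ vanishes and absorb $c_2$ via the same substitution in the $c_1\neq 0$ branch; the content is identical.
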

\begin{proof}
Cases (7.1) and (7.3) are easy. In case (7.2) there  are generators $x\in V^2$, $y_3\in V^3$ and $y_5\in V^5$. For the differential there are three possibilities: $d_1x=0$, $d_1y_3=x^2$ and $d_1y_5=0$ which gives the minimal model of $\Sph^2 \times \Sph^5$, $d_2x=0$, $d_2y_3=0$ and $d_2y_5=x^3$ which gives the minimal model of $\Sph^3\times \CP^2$ and $d_3x=0$, $d_3y_3=x^2$ and $d_3y_5=x^3$. The last model is isomorphic to the first via $\varphi: (\Lambda(x,y_1,y_2),d_3)\to (\Lambda V,d_1)$ with $\varphi(x)=x$, $\varphi(y_3)=y_3$ and $\varphi(y_5)=y_5-x y_3$.
\end{proof}

So we are  left with manifolds having exponents like in case (7.4). First note, that for a minimal Sullivan algebra $(\Lambda V,d)$ with exponents like in (7.4), so that $\dim V^2=2$, $\dim V^3=3$ and $\dim V^i=0$ else, the rank of $d|_{V^3}$ has to satisfy $\rank d|_{V^3}\geq 2$ if $\dim \coh^*(\Lambda V,d)<\infty$.

Consider the minimal Sullivan algebras 
\[(\Lambda V,d_{\tilde{\sigma}})=(\Lambda(x_1,x_2,y_1,y_2,y_3),d_{\tilde{\sigma}})\]
with $\tilde{\sigma}\in \Q^*$, $|x_i|=2$, $|y_j|=3$ and differential given by $d_{\tilde{\sigma}} x_i=0=d_{\tilde{\sigma}}y_3$, $d_{\tilde{\sigma}}y_1= x_1x_2$ and $d_{\tilde{\sigma}}y_2=x_1^2-\tilde{\sigma}x_2^2$.

\begin{lem}\label{LemmaIsomorphicSigmaModels}
Two such models $(\Lambda V, d_{\tilde{\sigma}})$ and $(\Lambda V, d_{\tilde{\sigma}'})$ are isomorphic if and only if the equivalence classes $[\tilde{\sigma}]$ and $[\tilde{\sigma}']$ in $\Q^*/(\Q^*)^2$ agree. 

Let $\sigma=[\tilde{\sigma}]\in \Q^*/(\Q^*)^2$. Then $(\Lambda V, d_{\tilde{\sigma}})$ is the minimal model of a 7--manifold $M^7_\sigma$.
\end{lem}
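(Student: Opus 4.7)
The statement has two independent halves, which I would attack in sequence.

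\emph{First half (isomorphism classification).} Any isomorphism $\varphi\colon (\Lambda V,d_{\tilde\sigma})\to(\Lambda V,d_{\tilde\sigma'})$ must preserve the grading and the minimal structure; since $V$ is concentrated in degrees $2$ and $3$, $\varphi$ restricts to linear isomorphisms $\varphi_2\colon V^2\to V^2$ and $\varphi_3\colon V^3\to V^3$. The compatibility $d_{\tilde\sigma'}\circ\varphi=\varphi\circ d_{\tilde\sigma}$ on $V^3$ forces the induced action of $\varphi_2$ on $(\Lambda V)^4=\mathrm{Sym}^2 V^2$ to send the subspace $W_{\tilde\sigma}:=\mathrm{span}\{x_1x_2,\; x_1^2-\tilde\sigma x_2^2\}$ onto $W_{\tilde\sigma'}$ (and then any lift of $\varphi_3$ on $V^3=\mathrm{span}(y_1,y_2,y_3)$ can be chosen, using that $y_3$ is closed). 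I would then write $\varphi_2=\left(\begin{smallmatrix}a&b\\c&d\end{smallmatrix}\right)$ and set up the two linear equations expressing that $\varphi_2(x_1x_2)$ and $\varphi_2(x_1^2-\tilde\sigma x_2^2)$ satisfy the defining equation $\tilde\sigma'A+C=0$ of $W_{\tilde\sigma'}$, where $A,C$ denote the coefficients of $x_1^2$ and $x_2^2$. An elementary case-distinction (all of $a,b,c,d$ nonzero versus $c=0$ or $a=0$) using $ad-bc\neq 0$ yields in every case that $\tilde\sigma\tilde\sigma'\in(\Q^*)^2$, i.e.\ $[\tilde\sigma]=[\tilde\sigma']$ in $\Q^*/(\Q^*)^2$ (recall $[1/\tilde\sigma]=[\tilde\sigma]$). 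Conversely, writing $\tilde\sigma'=t^2\tilde\sigma$ for some $t\in\Q^*$, the assignment $x_1\mapsto x_1$, $x_2\mapsto t^{-1}x_2$, together with the obvious rescaling on $V^3$, visibly extends to an isomorphism of cdga's.

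\emph{Second half (manifold realization).} I would apply Sullivan's realization theorem recalled in Section~\ref{subsubsec:RealizationManifold}: a simply connected minimal Sullivan algebra of formal dimension $n\not\equiv 0\pmod 4$ satisfying Poincaré duality is realized by a compact simply connected $n$-manifold. Here $n=7$, so no signature obstruction appears. It therefore suffices to prove that $(\Lambda V, d_{\tilde\sigma})$ is elliptic of formal dimension $7$ with Poincaré-duality cohomology.

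To do this, I would decompose $(\Lambda V,d_{\tilde\sigma})$ as the tensor product of the pure Sullivan algebra $(\Lambda(x_1,x_2,y_1,y_2),d_{\tilde\sigma})$ with equal numbers of even and odd generators, and the closed factor $(\Lambda(y_3),0)$. The sequence $(dy_1,dy_2)=(x_1x_2,\,x_1^2-\tilde\sigma x_2^2)$ in $\Q[x_1,x_2]$ is regular: $x_1x_2$ is clearly a nonzero-divisor, and in the ``wedge'' ring $\Q[x_1,x_2]/(x_1x_2)$ (whose elements are $f(x_1)+g(x_2)$ with $f(0)=g(0)$) the class of $x_1^2-\tilde\sigma x_2^2$ acts as a nonzero-divisor because $\tilde\sigma\neq 0$, as one verifies by a direct monomial-by-monomial computation. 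By the equivalence recalled in Section~\ref{subsubsec:PureSullivanAlgebrasRegularSequences}, the pure factor is thus elliptic with cohomology
\[
\coh^{*}\bigl(\Lambda(x_1,x_2,y_1,y_2),d_{\tilde\sigma}\bigr)\;\isom\;\Q[x_1,x_2]\big/(x_1x_2,\,x_1^2-\tilde\sigma x_2^2),
\]
a $4$-dimensional graded Poincaré-duality algebra with basis $1,x_1,x_2,x_1^2$ in degrees $0,2,2,4$. Tensoring with $\Lambda(y_3)$ shifts a copy up by $3$, giving a $7$-dimensional Poincaré-duality algebra with top class $x_1^2 y_3$; I would verify Poincaré duality by exhibiting the dual bases in degrees $(2,5)$ and $(3,4)$, where the key non-degeneracy in degree $2$ comes from the relation $x_1^2=\tilde\sigma x_2^2$ and $\tilde\sigma\neq 0$. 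Applying Sullivan's realization result then produces the desired manifold $M^7_\sigma$.

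\emph{Expected obstacle.} Neither half is genuinely hard, but the most delicate point is the invariance computation in the first half: one has to be careful that the square class $[\tilde\sigma\tilde\sigma']\in\Q^*/(\Q^*)^2$ (rather than just some ordered pair) is what comes out, and that changes of basis of $V^3$ (including the freedom to add multiples of the closed generator $y_3$ to $y_1,y_2$) genuinely add nothing beyond the action of $GL(V^2)$ on $W_{\tilde\sigma}$.
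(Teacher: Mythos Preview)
Your proposal is correct. The second half (manifold realization) is essentially identical to the paper's argument: both split off the closed factor $(\Lambda(y_3),0)$, verify that $(x_1x_2,\,x_1^2-\tilde\sigma x_2^2)$ is a regular sequence so the pure factor is elliptic, and then invoke Sullivan's realization; the only cosmetic difference is that the paper cites Halperin's theorem for Poincar\'e duality whereas you check it by hand.

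The genuine divergence is in the ``only if'' direction of the isomorphism classification. You carry out a direct matrix computation: writing $\varphi_2=\left(\begin{smallmatrix}a&b\\c&d\end{smallmatrix}\right)$ and imposing that $\varphi_2(W_{\tilde\sigma})\subset W_{\tilde\sigma'}$ leads (after the case split you indicate and a short factorization) to $\tilde\sigma\tilde\sigma'\in(\Q^*)^2$. The paper instead extracts a cohomological invariant: since $\coh^4(\Lambda V,d_{\tilde\sigma})$ is one-dimensional, cup product defines a symmetric bilinear form on $\coh^2$, and its determinant (well defined in $\Q^*/(\Q^*)^2$) equals $[\tilde\sigma]$. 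This is shorter and more conceptual---it bypasses any discussion of the $V^3$-freedom entirely, since it reads the invariant off the cohomology ring---and it is the kind of argument that transports cleanly to other coefficient fields (as the paper later uses for $\R$ and $\C$). Your approach, by contrast, makes the $GL_2$-orbit structure on the space of binary quadratic pairs explicit, which is more hands-on but requires the case analysis you flag as the ``expected obstacle.''
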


\begin{proof}
To see that $(\Lambda V, d_{\tilde{\sigma}})$ is the minimal model of a 7--manifold first note that $(\Lambda V, d_{\tilde{\sigma}})\isom (\Lambda(x_1,x_2,y_1,y_2),d_{\tilde{\sigma}})\tensor (\Lambda(y_3),0)$. A short computation shows, that $x_1^2-\tilde{\sigma} x_2^2$, $x_1x_2$ is a regular sequence. So $\coh^*(\Lambda (x_1,x_2,y_1,y_2),d_{\tilde{\sigma}})$ is finite dimensional and $(\Lambda (x_1,x_2,y_1,y_2), d_{\tilde{\sigma}})$ is rationally elliptic. By a theorem of Halperin \cite[Theorem 3]{Halperin77},  $\coh^*(\Lambda (x_1,x_2,y_1,y_2),d_{\tilde{\sigma}})$ and therefore  $\coh^*(\Lambda V,d_{\tilde{\sigma}})$ satisfy Poincaré duality, and by work of Sullivan $(\Lambda V,d_{\tilde{\sigma}})$ is the minimal model of a closed, simply connected 7--manifold. 

Since $\coh^4(\Lambda V,d_{\tilde{\sigma}})$ is one-dimensional, we can identify it with $\Q$ and get a symmetric bilinear form on $\coh^2(\Lambda V,d_{\tilde{\sigma}})$. The determinant of this form is $\tilde{\sigma}$ if we choose $x_1^2$ as a generator of $\coh^4(\Lambda V,d_{\tilde{\sigma}})$ and its equivalence class in $\Q^*/(\Q^*)^2$ is an invariant of the cohomology ring.

If, on the other hand, $\tilde{\sigma},\tilde{\sigma}'\in \Q^*$ with $[\tilde{\sigma}]=[\tilde{\sigma}']$ in $\Q^*/(\Q^*)^2$ are given, then $\sqrt{\tilde{\sigma}'/\tilde{\sigma}}\in \Q$ and $\varphi: (\Lambda V, d_{\tilde{\sigma}})\to (\Lambda V, d_{\tilde{\sigma}'})$ defined by $\varphi(x_1)=x_1$, $\varphi(x_2)= \sqrt{\tilde{\sigma}'/\tilde{\sigma}} \; x_2$, $\varphi(y_1)=\sqrt{\tilde{\sigma}'/\tilde{\sigma}}\; y_1$  and $\varphi(y_j)=y_j$ for $j=2,3$ is an isomorphism.
\end{proof}
\begin{rem}
One can choose \[M^7_{[1]}=(\CP^2\#\CP^2)\times \Sph^3\] and \[M^7_{[-1]}=(\CP^2\#\overline{\CP}^2)\times \Sph^3\simeq_\Q\Sph^2\times \Sph^2\times \Sph^3.\] Here $\overline{\CP}^2$ denotes reversing the orientation and $\simeq_\Q$ denotes being rationally homotopy equivalent.
\end{rem}

\begin{rem}\label{RemarkDefinitionXsigma}
The minimal Sullivan algebras
\[(\Lambda(x_1,x_2,y_1,y_2),d_{\tilde{\sigma}})\]
used in the proof define rationally elliptic spaces $X_\sigma$, $\sigma \in \Q^*/(\Q^*)^2$, of formal dimension 4. These can be realized as four-dimensional orbifolds of nonnegative curvature, see \cite{GGKRW14}. However, $X_\sigma $ is not rationally homotpy equivalent to a manifold, since the intersection form cannot be induced by a unimodular form defined over the free part of the integer cohomology. The proof also shows that $M^7_\sigma\simeq_\Q X_\sigma\times\Sph^3$.
\end{rem}

The last minimal model we need to consider is 
\[(\Lambda V,d)=(\Lambda(x_y,x_2,y_1,y_2,y_3),d), \quad |x_i|=2,|y_j|=3\]
with $dx_i=0$, $dy_1=x_1^2$, $dy_2=x_2^2$ and $dy_3=x_1x_2$. In \cite[Example 2.91]{FOT} it is introduced as the minimal model of an $\Sph^3$-bundle over $\Sph^2 \times \Sph^2$. We will give a description of it as a homogeneous space. Let 
\[K=\left\{\left(\left(\begin{smallmatrix}z&0\\0&z^{-1}\end{smallmatrix}\right),\left(\begin{smallmatrix}w&0\\0&w^{-1}\end{smallmatrix}\right),\left(\begin{smallmatrix}zw&0\\0&(zw)^{-1}\end{smallmatrix}\right)\right)\middle| z,w \in \Sph^1\right\}\leq G:=(\SU(2))^3\]
and $N^7=G/K$. Then, see \cite[Theorem 2.71]{FOT}, a model for $N^7$ is given by $(\Lambda W \oplus \Lambda( sU),d)$, where $\Lambda W=\coh^*(\mathrm{B}K;\Q)$, $\Lambda U=\coh^*(\mathrm{B}G;\Q)$, and $sU$ denotes a shift in degree, so $|su|=|u|-1$ for $u\in U$. The differential is given by $dw=0$ for $w\in W$ and $d(su)=\coh^*(\mathrm{B}\iota)(u)$ for $u \in U$ and $\iota: K\hookrightarrow G$ the inclusion. In our situation, $\Lambda W=\Lambda (x_1,x_2)$ with $|x_i|=2$, $\Lambda (sU)=\Lambda(y_1,y_2,y_3)$ with $|y_j|=3$. The map $\coh^*(\mathrm{B}\iota)$ can be computed from the inclusion of $H$ in the standard maximal torus of $G$. One gets $dy_1=x_1^2$, $dy_2=x_2^2$ and $dy_3=(x_1+x_2)^2$, so the minimal model of $N^7$ is isomorphic to $(\Lambda V, d)$ as above.

\begin{proof}[Proof of Theorem~\ref{TheoremDimension7Rational}]
By Lemma~\ref{Lemma7dimensionalFinitelyManyExamples} we only need to show that a minimal model with exponents like in (7.4) is isomorphic to the minimal model of $N^7$ or some $M^7_\sigma$. Let $(\Lambda V,d)$ be a minimal model with exponents like in (7.4). Then as we already noted $\rank d|_{V^3}\geq2$.

Suppose $\rank d|_{V^3}=2$. Then $\coh^4(\Lambda V,d)$ is one-dimensional and the multiplication $\coh^2(\Lambda V,d)\times\coh^2(\Lambda V,d)\to \coh^4(\Lambda V,d)$ can be interpreted as a symmetric bilinear form. Choose a basis $x_1, x_2$ of $V^2=\coh^2(\Lambda V,d)$ that diagonalizes this form. Then $x_1 x_2\in (\Lambda V)^4$ is exact, so there exists $y_1 \in V^3$ with $dy_1=x_1x_2$. Choose $y_3 \in\ker d|_{V^3}$. Then choose $y_2\in V^3$ such that $y_1,y_2,y_3$ is a basis. By subtracting a multiple of $y_1$, scaling and possibly interchanging $x_1$ and $x_2$, we can assume that $dy_2=x_1^2+a x_2^2$ for some $a\in \Q$. If $a=0$ then for every $n\in \N$, we had that $x_2^n$ is closed but not exact, so $a\neq0$. 

If $\rank d|_{V^3}=3$, then the minimal model is obviously the one of $N^7$.\end{proof}

Using the classification of rationally elliptic manifolds in lower dimensions, the classification of compact, simply connected homogeneous manifolds in dimensions up to 9 by Klaus \cite{Klaus} and low-dimensional cohomogeneity one manifolds by Hoelscher (\cite{Hoel10class}  and \cite{Hoel10hom}) one can prove the following.
\begin{samepage}
\begin{prop}\label{PropM7sigmanot} For $\sigma \in \Q^*/(\Q^*)^2\setminus\{ [1],[-1]\}$ the manifold $M_\sigma^7$ does not have the rational homotopy type of 
\begin{enumerate}[label=\alph*)]
\item a product of closed, simply connected manifolds,
\item a bundle over a closed, simply connected, rationally elliptic manifold of dimension $\leq 5$ with fibre a closed, simply connected manifold,
\item a closed, simply connected, homogeneous space,
\item a closed, simply connected cohomogeneity one manifold.
\end{enumerate}
\end{prop}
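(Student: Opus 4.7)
The overall strategy is to exploit a rational homotopy invariant of $M^7_\sigma$ coming from the cup product on $\coh^2$. In the basis $x_1,x_2$ provided by the minimal model one has $x_1x_2=0$ and $x_1^2=\tilde{\sigma}\,x_2^2$, with $x_1^2$ generating the one-dimensional $\coh^4$; the bilinear form $\coh^2\times\coh^2\to\coh^4\isom\Q$ is therefore non-degenerate, and since a different choice of generator of $\coh^4$ rescales its discriminant by a square, the discriminant modulo squares equals the class $\sigma\in\Q^*/(\Q^*)^2$ and depends only on the rational cohomology ring. The plan is to show in each of the four cases (a)--(d) that the cup-product discriminant of the total space is forced to lie in $\{[1],[-1]\}$.

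For (a), assume $M^7_\sigma\simeq_\Q F\times G$ with $F,G$ closed and simply connected. The K\"unneth decomposition yields $\coh^4(F\times G)\isom\coh^4(F)\oplus(\coh^2(F)\otimes\coh^2(G))\oplus\coh^4(G)$, which must be one-dimensional while $\betti_2(F)+\betti_2(G)=2$ and $\dim F+\dim G=7$. A short case analysis leaves only $(\dim F,\dim G)=(3,4)$ with $F\simeq_\Q\Sph^3$ and $\betti_2(G)=2$, and $(\dim F,\dim G)=(2,5)$ with $F=\Sph^2$ and $\betti_2(G)=1$ (up to swapping factors). In the first case $G$ is a closed simply connected four-manifold, so the cup-product form on $\coh^2(G;\Q)$ is induced by its integral intersection form, which is unimodular by Poincar\'e duality; thus its rational discriminant is $[\pm1]$. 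In the second case $G$ must be rationally equivalent to $\Sph^2\times\Sph^3$ (the only rationally elliptic five-manifold with $\betti_2=1$), and K\"unneth yields the hyperbolic form of discriminant $[-1]$ on $\coh^2(M)$.

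For (b), the Fact in the introduction lists the finitely many rational homotopy types of closed simply connected rationally elliptic manifolds of dimension at most five, enumerating the admissible bases $B$. For a fibration $F\to E\to B$ with $E\simeq_\Q M^7_\sigma$ and $F$ a closed simply connected manifold, the two-out-of-three property gives that $F$ is rationally elliptic of dimension $7-\dim B\in\{2,3,4,5\}$, so $F$ also runs through the same finite list. For each admissible pair $(B,F)$ the minimal model of $E$ is a Koszul--Sullivan extension of the minimal model of $B$ by that of $F$; I would normalize the finitely many extension parameters by changes of basis in the fibre generators, compute the Betti numbers of $E$, and for those parameter values that reproduce the Betti numbers of $M^7_\sigma$ read off the discriminant of the cup-product form on $\coh^2(E)$. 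The expected outcome is that only discriminants $[\pm1]$ arise: for instance $F=\Sph^3$ over $\CP^2\#\CP^2$ yields $[1]$, while $F=\Sph^3$ over $\Sph^2\times\Sph^2$, or $F=\Sph^2$ over $\Sph^2\times\Sph^3$ or over $\Sph^2$, yields $[-1]$.

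For (c) and (d) the arguments are classification-theoretic. Klaus's classification \cite{Klaus} of compact simply connected homogeneous manifolds up to dimension nine yields an explicit finite list of seven-dimensional examples; reading off each cohomology ring, the only $M^7_\sigma$ appearing is $\Sph^2\times\Sph^2\times\Sph^3\simeq_\Q M^7_{[-1]}$. Similarly, Hoelscher's classification \cite{Hoel10class,Hoel10hom} of simply connected cohomogeneity-one manifolds in low dimensions gives a finite list whose seven-dimensional members can be matched against those of Theorem~\ref{TheoremDimension7Rational}. The main obstacle is organizational rather than conceptual: exhaustively normalizing Koszul--Sullivan extensions in (b) and inspecting cohomology rings against the classifications in (c) and (d) without overlooking a case. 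In particular, in (b) one cannot dismiss non-trivial twistings of the naive tensor product $\coh^*(B)\otimes\coh^*(F)$ without explicit computation of the relative Sullivan model.
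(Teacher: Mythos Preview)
Your approach is correct and is precisely what the paper has in mind. The paper itself gives no proof beyond the sentence preceding the proposition, which merely says that the result follows from the classification of rationally elliptic manifolds in low dimensions together with Klaus's and Hoelscher's classifications; your proposal fills in considerably more detail than the paper does. In particular, your use of the discriminant of the cup-product form on $\coh^2$ as the governing invariant is exactly how the paper distinguishes the $M^7_\sigma$ (Lemma~\ref{LemmaIsomorphicSigmaModels}), and your unimodularity argument in (a) for the 4-manifold factor is the same observation the paper makes in Remark~\ref{RemarkDefinitionXsigma} to show that $X_\sigma$ is not a manifold for $\sigma\neq[\pm1]$. The organizational case-checking you flag in (b)--(d) is genuinely what is required and is not written out in the paper either.
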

\end{samepage}

The classification of real homotopy types of closed, simply connected, rationally elliptic 7--manifolds now reduces to understanding which of the rational homotopy types of Theorem~\ref{TheoremDimension7Rational} give the same real one. Lemma~\ref{LemmaIsomorphicSigmaModels} carries over to the real case, replacing $\Q^*/(\Q^*)^2$ by $\R^*/(\R^*)^2=\{1,-1\}$. Since $M^7_{[1]}=(\CP^2\#\CP^2)\times \Sph^3$, $M^7_{[-1]}=(\CP^2\#\overline{\CP}^2)\times \Sph^3$, and the other manifolds in Theorem~\ref{TheoremDimension7Rational} already differ by their Betti numbers, we get the following proposition.
\begin{prop}
A closed, simply connected 7--manifold is rationally  elliptic if and only if  it has the real homotopy type of one of the following manifolds: \\
$\Sph^7$, $\Sph^2 \times \Sph^5$, $\CP^2 \times \Sph^3$, $\Sph^3 \times \Sph^4$, $N^7$, $(\CP^2\#\CP^2)\times \Sph^3$ or $(\CP^2\#\overline{\CP}^2)\times \Sph^3$.
\end{prop}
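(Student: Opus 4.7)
The plan is to translate the rational classification from Theorem~\ref{TheoremDimension7Rational} into a real classification by determining which rational homotopy types coincide after tensoring the rational minimal model with $\R$. First I would take an arbitrary closed, simply connected, rationally elliptic 7--manifold $M$ and apply Theorem~\ref{TheoremDimension7Rational} to obtain that its rational minimal model is isomorphic to that of $\Sph^7$, $\Sph^2\times\Sph^5$, $\CP^2\times\Sph^3$, $\Sph^3\times\Sph^4$, $N^7$, or some $M^7_\sigma$ with $\sigma\in\Q^*/(\Q^*)^2$. Since the real minimal model is the rational minimal model tensored with $\R$, this yields at most a list of real homotopy types indexed by these rational classes.

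Next I would verify that the five non-$M^7_\sigma$ candidates all have pairwise distinct real homotopy types, and are distinct from any $M^7_\sigma$. This is immediate from reading off the Betti numbers $(\betti_2,\betti_3)$: one gets $(0,0)$, $(1,0)$, $(1,1)$, $(0,1)$, $(2,3)$ respectively, while every $M^7_\sigma$ has $(\betti_2,\betti_3)=(2,2)$.

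The main work is in handling the $M^7_\sigma$ family. Here I would rerun the argument of Lemma~\ref{LemmaIsomorphicSigmaModels} with $\Q$ replaced by $\R$. For the "if" direction, the explicit isomorphism $\varphi$ with $\varphi(x_2)=\sqrt{\tilde\sigma'/\tilde\sigma}\,x_2$, $\varphi(y_1)=\sqrt{\tilde\sigma'/\tilde\sigma}\,y_1$ is defined over $\R$ precisely when $\tilde\sigma'/\tilde\sigma>0$, i.e.\ when $[\tilde\sigma]=[\tilde\sigma']$ in $\R^*/(\R^*)^2=\{[1],[-1]\}$. For the "only if" direction, the determinant of the symmetric bilinear form on $\coh^2$ induced by the cup product to a chosen generator of $\coh^4$ is well-defined modulo $(\R^*)^2$, so $[\tilde\sigma]\in \R^*/(\R^*)^2$ is an invariant of the real minimal model. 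Hence the $M^7_\sigma$ collapse over $\R$ to exactly two real homotopy types, namely $M^7_{[1]}$ and $M^7_{[-1]}$.

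Finally, I would identify these two classes with the manifolds listed in the proposition. The remark following Lemma~\ref{LemmaIsomorphicSigmaModels} already exhibits $M^7_{[1]}=(\CP^2\#\CP^2)\times\Sph^3$ and $M^7_{[-1]}=(\CP^2\#\overline{\CP}^2)\times\Sph^3$ as rational (hence real) realizations, completing the list. I do not expect a serious obstacle; the only delicate point is ensuring that the determinant invariant used to distinguish the $M^7_\sigma$ really is basis-independent after the passage from $\Q$ to $\R$, and this is routine linear algebra.
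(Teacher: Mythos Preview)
Your approach is essentially identical to the paper's: reduce to the rational classification, observe that Lemma~\ref{LemmaIsomorphicSigmaModels} carries over verbatim with $\Q^*/(\Q^*)^2$ replaced by $\R^*/(\R^*)^2=\{[1],[-1]\}$, and separate the remaining types by Betti numbers. One small slip: your stated Betti numbers for $N^7$ and $M^7_\sigma$ are off --- since $\rank d|_{V^3}=3$ for $N^7$ and $\rank d|_{V^3}=2$ for $M^7_\sigma$, one has $(\betti_2,\betti_3)=(2,0)$ and $(2,1)$ respectively, not $(2,3)$ and $(2,2)$ --- but the pairs are still distinct, so the argument is unaffected.
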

Of these manifolds the only ones having the same complex homotopy type are  $(\CP^2\#\CP^2)\times \Sph^3$ and $(\CP^2\#\overline{\CP}^2)\times \Sph^3$. Since $ \CP^2\#\overline{\CP}^2\simeq_\Q \Sph^2\times \Sph^2$ this shows the following for the complex homotopy types.

\begin{prop}
A closed, simply connected 7--manifold is rationally  elliptic if and only if  it has the complex homotopy type of one of the following manifolds: \\
$\Sph^7$, $\Sph^2 \times \Sph^5$, $\CP^2 \times \Sph^3$, $\Sph^3 \times \Sph^4$, $N^7$ or $\Sph^2 \times \Sph^2 \times \Sph^3$.
\end{prop}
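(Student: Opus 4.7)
The plan is to deduce the complex classification directly from the real one just established, exploiting the fact that the parameter $\sigma\in \R^*/(\R^*)^2=\{\pm 1\}$ that distinguished $M_{[1]}^7$ from $M_{[-1]}^7$ becomes trivial in $\C^*/(\C^*)^2$. First I would extend Lemma~\ref{LemmaIsomorphicSigmaModels} to an arbitrary field $\K$ of characteristic zero: the same rescaling $x_2\mapsto \sqrt{\tilde\sigma'/\tilde\sigma}\,x_2$, $y_1\mapsto \sqrt{\tilde\sigma'/\tilde\sigma}\,y_1$ shows that $(\Lambda V,d_{\tilde\sigma})\tensor\K\isom (\Lambda V,d_{\tilde\sigma'})\tensor\K$ whenever $\tilde\sigma'/\tilde\sigma$ is a square in $\K$, and the equivalence class in $\K^*/(\K^*)^2$ is a complete invariant. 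Since $\C^*/(\C^*)^2$ is trivial, the explicit isomorphism $x_2\mapsto i\,x_2$, $y_1\mapsto i\,y_1$ realises $M_{[1]}^7\simeq_\C M_{[-1]}^7$.

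Next I would identify this common complex homotopy type with one of the listed manifolds. Because $\CP^2\#\overline{\CP}^2$ and $\Sph^2\times\Sph^2$ are both formal 4--manifolds, it suffices to match their rational cohomology rings, and this is classical: the intersection form $\langle 1,-1\rangle$ is equivalent over $\Q$ to the hyperbolic plane via $u=x+y$, $v=x-y$, so $\CP^2\#\overline{\CP}^2\simeq_\Q\Sph^2\times\Sph^2$. Multiplying by $\Sph^3$ gives $M_{[-1]}^7\simeq_\Q \Sph^2\times\Sph^2\times\Sph^3$, which is \emph{a fortiori} a complex equivalence. Together with the preceding real classification, this shows that every closed, simply connected, rationally elliptic 7--manifold has the complex homotopy type of one of the six listed spaces.

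The converse is immediate: all six listed spaces are rationally elliptic, and rational ellipticity depends only on the complex homotopy type because the complex minimal model determines $\dim V^k\tensor\Q$. It only remains to verify that the six listed complex homotopy types are pairwise distinct; this is routine from the rational Betti numbers (a complex--homotopy invariant), as $\Sph^7$ has total Betti number $2$, the pair $\Sph^2\times\Sph^5$ and $\Sph^3\times\Sph^4$ have total Betti number $4$ but differ in $\betti_2$, the pair $\CP^2\times\Sph^3$ and $N^7$ have total Betti number $6$ but differ in $\betti_2$, and $\Sph^2\times\Sph^2\times\Sph^3$ has total Betti number $8$. The only conceptually new observation is the triviality of $\C^*/(\C^*)^2$, so I anticipate no substantive obstacle.
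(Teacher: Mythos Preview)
Your proposal is correct and follows essentially the same approach as the paper: start from the real classification, observe that over $\C$ the parameter in $\K^*/(\K^*)^2$ collapses so that the two $M^7_\sigma$ types coincide, and identify the resulting type with $\Sph^2\times\Sph^2\times\Sph^3$ via $\CP^2\#\overline{\CP}^2\simeq_\Q\Sph^2\times\Sph^2$. You supply more detail than the paper does---the explicit rescaling isomorphism, the Betti-number check for pairwise distinctness, and the converse direction---but the underlying argument is the same.
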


\section{Higher dimensions}\label{sec:DimensionsEightAndNine}
\subsection{Dimension 8}
As before, we start by computing the possible exponents of closed, simply connected, rationally elliptic 8--manifolds using the results of Friedlander and Halperin mentioned in Section \ref{susubsec:Exponents}..
\begin{lem}\label{Lemma8dimensionalExponents}
A closed, simply connected, rationally elliptic 8--manifold has one of the following exponents:
\begin{multicols}{2}
\begin{enumerate}
\item[(8.1)]$a=(~)$, $b=(2,3)$
\item[(8.2)]$a=(1)$, $b=(5)$
\item[(8.3)]$a=(2)$, $b=(6)$
\item[(8.4)]$a=(4)$, $b=(8)$
\item[(8.5)]$a=(1)$, $b=(2,2,2)$
\item[(8.6)]$a=(1,1)$, $b=(2,4)$
\item[(8.7)]$a=(1,1)$, $b=(3,3)$
\item[(8.8)]$a=(1,2)$, $b=(3,4)$
\item[(8.9)]$a=(1,3)$, $b=(2,6)$
\item[(8.10)]$a=(2,2)$, $b=(4,4)$
\item[(8.11)]$a=(1,1,1)$, $b=(2,2,3)$
\item[(8.12)]$a=(1,1,2)$, $b=(2,2,4)$
\item[(8.13)]$a=(1,1,1,1)$, $b=(2,2,2,2)$
\end{enumerate}
\end{multicols}
\end{lem}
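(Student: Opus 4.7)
The plan is to apply the Friedlander--Halperin arithmetic conditions (a)--(d) and (SAC) from Section~\ref{susubsec:Exponents} to the case $n=8$, and to enumerate all compatible pairs of tuples $(a,b)$. First I would bound $q$ and $r$. Since $a_i\geq 1$, condition (b) forces $q\leq \sum_i a_i \leq 4$. Combining (d) with $\sum_j b_j\geq 2r$ (which uses $b_j\geq 2$, built into the Friedlander--Halperin setup in the simply connected case) gives $3r \leq 8+2\sum_i a_i - q\leq 16-q$, so $r\leq 5$. The parity of (d) moreover forces $r\equiv q\pmod 2$.

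Next, for each admissible pair $(q,r)$ within these bounds I would list all weakly increasing tuples $a\in \N^q$ with $a_i\geq 1$ and $\sum_i a_i\leq 4$, compute the required value of $\sum_j b_j$ from the equality (d), and then enumerate weakly increasing tuples $b\in \N^r$ with $b_j\geq 2$ summing to this value. Each candidate is then tested against (c) and (SAC). The singleton instances of (SAC) form the sharpest single test: for every $i\in\{1,\dots,q\}$ some $b_j$ must equal $ka_i$ with $k\geq 2$, which by itself discards many candidates. The higher-cardinality instances of (SAC) require the existence of enough distinct indices $j_1<\dots<j_s$ with admissible integer representations $b_{j_k}=\sum_l\gamma_{kl}a_{i_l}$, $\sum_l\gamma_{kl}\geq 2$; these additional constraints are essentially automatic once (c) and the singleton tests hold, especially in the cases where most $a_i$ equal $1$. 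Organized by $q$, I expect $q=0$ to yield only (8.1); $q=1$ to give (8.2)--(8.5); $q=2$ to give (8.6)--(8.10); $q=3$ to give (8.11) and (8.12); and $q=4$ to give (8.13).

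The main obstacle is purely combinatorial bookkeeping: several arithmetic candidates pass most tests but must be rejected by one specific criterion, and care is required to catch each of them. For example, with $a=(1,2)$ the tuple $b=(4,5)$ satisfies (a), (b), (d) and even (SAC), yet $\sum_j(2b_j-1)=16>15$ violates (c). Conversely, superficially attractive tuples like $a=(1,1)$, $b=(2,2,2,2)$ satisfy (b) and (c) but fail the equality (d). Once (c), (d) and (SAC) are checked simultaneously against every tuple surviving the initial bounds, exactly the thirteen listed pairs remain, which completes the classification.
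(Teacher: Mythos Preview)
Your approach is exactly the paper's: the lemma is stated there without detailed proof, merely as a routine enumeration using the Friedlander--Halperin conditions of Section~\ref{susubsec:Exponents}, and your outline carries this out correctly.

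One small correction to your exposition: the illustrative example $a=(1,2)$, $b=(4,5)$ does \emph{not} satisfy (d), since $2(\sum b_j-\sum a_i)-(r-q)=2(9-3)-0=12\neq 8$; in fact, for $n=8$ condition (c) is automatically implied by (b) and (d) (one checks $\sum_j(2b_j-1)=2\sum_j b_j-r\leq 15$ in each parity case), so no candidate is ever rejected by (c) alone. This does not affect your argument, only the commentary.
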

In eight of these cases we show that there are only finitely many possible rational homotopy types with the given exponents.
\begin{prop}
In cases $(8.1)$, $(8.2)$, $(8.3)$, $(8.4)$, $(8.5)$, $(8.8)$, $(8.9)$ and $(8.10)$ of Lemma~\ref{Lemma8dimensionalExponents} there are only finitely many rational homotopy types of closed, simply connected 8--manifolds with these exponents. They are:
\begin{multicols}{3}
\begin{enumerate}
\item[(8.1)] $\Sph^3 \times \Sph^5$
\item[(8.2)] $\CP^4$
\item[(8.3)] $\mathbb{HP}^2 $
\item[(8.4)] $\Sph^8$
\item[(8.5)] $\Sph^2 \times \Sph^3 \times \Sph^3$
\item[(8.8)] $\CP^2 \times \Sph^4$
\item[(8.9)] $\Sph^2 \times \Sph^6$
\item[(8.10)] $\Sph^4 \times \Sph^4$,\\ $\HP^2\#\HP^2$
\end{enumerate}
\end{multicols}
\end{prop}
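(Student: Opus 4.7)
The plan is to parallel the case-by-case analyses from dimensions 6 and 7. First I would observe that in every one of the eight exponent cases of Lemma~\ref{Lemma8dimensionalExponents} the degrees force $V=V^{\mathrm{even}}\oplus V^{\mathrm{odd}}$, $dV^{\mathrm{even}}=0$, and $dV^{\mathrm{odd}}\subset\Lambda V^{\mathrm{even}}$, so $(\Lambda V,d)$ is pure, and by Section~\ref{subsubsec:PureSullivanAlgebrasRegularSequences} rational ellipticity is equivalent to $dy_1,\dots,dy_r$ forming a regular sequence in the polynomial algebra $\Lambda V^{\mathrm{even}}$. For each exponent case the strategy is: (i) write the $dy_j$ as a general polynomial of the correct degree with scalar parameters; (ii) reduce to a short list of normal forms using CDGA isomorphisms---linear changes of basis on each $V^k$, rescaling of generators, and gauge moves $y_j\mapsto y_j+\lambda\cdot(\text{decomposable of degree }|y_j|)$; and (iii) match each surviving normal form to a specific manifold, using Sullivan's realizability result from Section~\ref{subsubsec:RealizationManifold} to rule out parameter values which do not correspond to a closed, simply connected $8$--manifold.

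Cases (8.1)--(8.5) will come out directly from (i)--(ii). In (8.1), $V^{\mathrm{even}}=0$ forces $d=0$, giving $\Sph^3\times\Sph^5$. In (8.2)--(8.4), both $V^{\mathrm{even}}$ and $V^{\mathrm{odd}}$ are one-dimensional; $dy$ is a nonzero scalar multiple of the only monomial of the right degree, and rescaling yields $\CP^4$, $\HP^2$, and $\Sph^8$. In (8.5) the three odd generators all satisfy $dy_j\in\Q\cdot x^2$; a linear change of basis of $V^3$ sends one to $x^2$ and the other two to $0$, so the model is that of $\Sph^2\times\Sph^3\times\Sph^3$.

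For (8.8) and (8.9), which have two even generators of distinct degrees, I would proceed as in case (6.4) of Section~\ref{sec:DimensionSix}: parametrize $dy_1, dy_2$ by the generic polynomials of the prescribed degrees in $x_1, x_2$, impose regularity of the sequence, and kill off coefficients using the gauge moves. In (8.9) the substitution $y_2\mapsto y_2+\lambda_1 x_1^4 y_1+\lambda_2 x_1 x_2 y_1$ annihilates the $x_1^6$ and $x_1^3 x_2$ contributions to $dy_2$, and one is left with $dy_1=x_1^2$, $dy_2=x_2^2$ after a final rescaling, i.e.\ the model of $\Sph^2\times\Sph^6$; the analogous sequence of moves reduces (8.8) to $dy_1=x_1^3$, $dy_2=x_2^2$, the model of $\CP^2\times\Sph^4$.

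The hard part will be case (8.10), where $|x_1|=|x_2|=4$ and $(dy_1,dy_2)$ spans a pencil of quadratic forms on $V^{\mathrm{even}}$. Over $\overline{\Q}$ there is only one $GL(V^{\mathrm{even}})\times GL(V^{\mathrm{odd}})$-orbit of regular pencils, but over $\Q$ an infinite family of inequivalent pencils appears, distinguished by a discriminant class in $\Q^*/(\Q^*)^2$. Finiteness of the list of rational homotopy types of \emph{manifolds} hinges on Sullivan's criterion: the intersection form on $H^4$ is, up to a scalar, determined by the pencil, and realizability by a closed, simply connected $8$--manifold forces it to be equivalent over $\Q$ to $\sum\pm x_i^2$, hence its discriminant must lie in $\{\pm 1\}\subset\Q^*/(\Q^*)^2$. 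Exactly two classes survive: the signature-$0$ one (two rational degenerate members of the pencil), which reduces to $dy_1=x_1^2,\ dy_2=x_2^2$ and gives $\Sph^4\times\Sph^4$; and the definite signature-$(\pm 2)$ one, which matches $\HP^2\#\HP^2$.
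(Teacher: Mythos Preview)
Your handling of (8.1)--(8.5), (8.9), and (8.10) is correct and mirrors the paper, which simply writes ``Most is easy, so we concentrate on (8.10)'' and then argues for that case exactly as you do, using the manifold hypothesis to force the intersection form on $\coh^4$ into one of the two shapes $\mathrm{diag}(1,\varepsilon)$. One small imprecision: in (8.9) purity is not literally forced by degrees alone, since $(\Lambda^{\ge 2}V)^7=\langle x_1^2y_1\rangle\neq 0$; you need $d^2=0$ together with $dy_1\neq 0$ to conclude $dx_2=0$. This is easily repaired.

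The genuine gap is in (8.8). With $|x_1|=2$, $|x_2|=4$, $|y_1|=5$, $|y_2|=7$, write $dy_1=a\,x_1^3+b\,x_1x_2$. The only CDGA automorphisms in degrees $\le 4$ send $x_1\mapsto\lambda x_1$ and $x_2\mapsto\mu x_2+\nu x_1^2$; under these the $x_1x_2$--coefficient of $dy_1$ becomes $b\lambda\mu/\rho$ (where $y_1\mapsto\rho y_1$), which never vanishes for $b\neq 0$, and there is no gauge move on $y_1$ to help. So when $b\neq 0$ the model does \emph{not} reduce to $dy_1=x_1^3$, and your ``analogous sequence of moves'' does not exist. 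In fact the proposition as stated is false for this case: the complex Grassmannian $G_2(\C^4)=U(4)/(U(2)\times U(2))$ is a closed, simply connected, rationally elliptic $8$--manifold whose minimal model has exactly the exponents $a=(1,2)$, $b=(3,4)$ (generators $c_1,c_2$ in degrees $2,4$ and relations $c_1^3-2c_1c_2$, $c_2^2-c_1^2c_2$ in degrees $6,8$), and its intersection form on $\coh^4$ has signature $\pm 2$, whereas that of $\CP^2\times\Sph^4$ has signature $0$. Hence they are not rationally homotopy equivalent. There is even a third type: the pure model with $dy_1=x_1x_2$, $dy_2=x_1^4+x_2^2$ is rationally elliptic (the only common zero of $x_1x_2$ and $x_1^4+x_2^2$ in $\C^2$ is the origin), has the cube of the degree--$2$ generator nonzero, and has hyperbolic intersection form, so it is realizable by a closed simply connected $8$--manifold by the very Sullivan criterion you invoke in (8.10). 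The paper passes over (8.8) without argument, so the oversight is shared.
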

\begin{proof} 
Most is easy, so we concentrate on (8.10).
Let $M$ be a manifold with exponents like in (8.10). Then there is a basis $\omega_1,\omega_2$ of $\coh^4(M;\Q)$ such that $\omega_1 \omega_2=0$ and $\omega_1^2=\varepsilon \omega_2^2$, $\varepsilon=\pm1$. Choose $x_1,x_2\in V^4$ corresponding to $\omega_1,\omega_2$. Then there are $y_1,y_2\in V^7$ with $dy_1=x_1x_2$ and $dy_2=x_1^2-\varepsilon x_2^2$. For $\varepsilon =1$ this is the minimal model of $\HP^2 \# \HP^2$, for $\varepsilon=-1$ it is isomorphic to the one of $\Sph^4 \times \Sph^4$.
\end{proof}
\begin{rem}
In case (8.10) there is  an infinite family of simply connected rationally elliptic spaces that are not rationally homotopy equivalent to a manifold, analogous to the four-dimensional family $X_\sigma$.
\end{rem}

\begin{prop}
The rational homotopy types of closed, simply connected, rationally elliptic 8--manifolds with exponents like in case (8.12) of Lemma~\ref{Lemma8dimensionalExponents} are exactly the ones given by the  $X_\sigma \times \Sph^4$ with $\sigma \in \Q^* /(\Q^*)^2$. In particular, there are infinitely many of these.
\end{prop}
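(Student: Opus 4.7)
The strategy is to show that every such minimal model $(\Lambda V, d)$ is isomorphic, after suitable changes of generators, to the tensor product of the minimal model of some $X_\sigma$ with the minimal model of $\Sph^4$. Write $V^2 = \langle x_1, x_2 \rangle$, $V^3 = \langle y_1, y_2 \rangle$, $V^4 = \langle x_3 \rangle$, $V^7 = \langle y_3 \rangle$. Minimality forces $dy_1, dy_2 \in \text{Sym}^2 V^2 \subset \Q[x_1, x_2]$, and the only monomials of degree~$8$ in $\Lambda^{\geq 2} V$ containing odd generators are $x_i y_1 y_2$, so
\[ dy_3 = P(x_1, x_2, x_3) + (\alpha x_1 + \beta x_2)\, y_1 y_2 \]
with $P$ a polynomial in the even generators and $\alpha, \beta \in \Q$.

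First I would show that $d|_{V^3}\colon V^3 \to \text{Sym}^2 V^2$ has rank $2$, using Poincaré duality (which holds by rational ellipticity). If the rank were $0$ then all four elements of $(\Lambda V)^5 = V^2 \cdot V^3$ would be closed with no coboundaries, giving $\dim \coh^5 = 4$ while $\dim \coh^3 = 2$. Rank~$1$ (say $dy_2 = 0$) similarly yields $\dim \coh^3 = 1$ but $\dim \coh^5 = 2$. Once the rank is $2$, the identity
\[ d^2 y_3 = (\alpha x_1 + \beta x_2)\,(dy_1 \cdot y_2 - dy_2 \cdot y_1) = 0, \]
combined with the linear independence of $dy_1, dy_2$ in the integral domain $\Q[x_1, x_2]$, forces $\alpha = \beta = 0$. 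So $(\Lambda V, d)$ is pure.

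Next I would normalize $dy_3$. As a pure rationally elliptic model, $dy_1, dy_2, dy_3$ form a regular sequence in $\Q[x_1, x_2, x_3]$; in particular $dy_1, dy_2$ share no common factor in $\Q[x_1, x_2]$, and $S := \Q[x_1, x_2]/(dy_1, dy_2)$ is Artinian Gorenstein with Poincaré series $(1+t)^2$ that vanishes in standard degrees~$\geq 3$. Decompose $dy_3 = A(x_1, x_2) + B(x_1, x_2)\, x_3 + c\, x_3^2$. Since $S[x_3]$ has $(x_1, x_2)\cdot S[x_3]$ as its unique associated prime, $dy_3$ is a non-zero-divisor in this quotient if and only if $c \neq 0$. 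After rescaling $y_3$ I set $c = 1$ and replace $x_3$ by $\tilde x_3 := x_3 + B/2$ to absorb the middle term, leaving $dy_3 = A' + \tilde x_3^2$. The quartic $A'$ lies automatically in $(dy_1, dy_2)$ because $S$ vanishes in standard degree~$4$; write $A' = a\, dy_1 + b\, dy_2$ and replace $y_3$ by $y_3 - a y_1 - b y_2$ to reduce $dy_3$ to $\tilde x_3^2$.

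The normalized algebra now splits as the tensor product $(\Lambda(x_1, x_2, y_1, y_2), d) \tensor (\Lambda(\tilde x_3, y_3), d)$, the second factor being the minimal model of $\Sph^4$. For the first factor, the classification of pencils in $\text{Sym}^2 V^2$ without common zero (equivalently, non-degenerate binary quadratic forms over $\Q$ up to $\text{GL}(V^2)$ and scaling) produces, after basis changes in $V^2$ and $V^3$, the relations $dy_1 = x_1 x_2$ and $dy_2 = x_1^2 - \tilde\sigma x_2^2$ for a unique $[\tilde\sigma] \in \Q^*/(\Q^*)^2$, so $(\Lambda V, d)$ is the minimal model of $X_\sigma \times \Sph^4$. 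Conversely, each such product is realized as a closed simply connected $8$-manifold by Sullivan's theorem, since the intersection form on $\coh^4$ is given by the hyperbolic matrix $\bigl(\begin{smallmatrix}0 & 1 \\ 1 & 0\end{smallmatrix}\bigr)$, which has signature zero and is $\Q$-equivalent to $x^2 - y^2$. Distinctness for different $\sigma$ follows from the invariance of the discriminant class, exactly as in Lemma~\ref{LemmaIsomorphicSigmaModels}. The main obstacle is carrying out the normalization of $dy_3$ cleanly, in particular the observation that vanishing of $S$ in standard degree~$4$ lets the entire quartic part be absorbed into the $y_i$.
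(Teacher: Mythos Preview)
There is a genuine gap: you never address $dx_3$, the differential of the degree-$4$ generator. Minimality only gives $dx_3\in(\Lambda^{\geq2}V)^5=V^2\cdot V^3$, which need not vanish. This undermines two steps. In the rank-$1$ case (say $dy_2=0$) the closed part of $(\Lambda V)^5$ is $\langle x_1y_2,\,x_2y_2\rangle$, and $dx_3$ may be any element of this span; if $dx_3\neq0$ then $\dim\coh^5=1=\dim\coh^3$ and Poincar\'e duality yields no contradiction. More seriously, your identity $d^2y_3=(\alpha x_1+\beta x_2)(dy_1\cdot y_2-dy_2\cdot y_1)$ omits the term $dP=(2cx_3+B)\,dx_3$, so you cannot yet conclude $\alpha=\beta=0$ or that the model is pure. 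The subsequent appeal ``as a pure rationally elliptic model, $dy_1,dy_2,dy_3$ form a regular sequence'' is then circular, since purity is precisely what is in question.

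The repair is short and fits your framework. Since $\dim V^{\mathrm{even}}=\dim V^{\mathrm{odd}}=3$, ellipticity gives $\chi_\pi=0$ and hence $\coh^{\mathrm{odd}}=0$; in particular $\coh^3=0$ forces $\rank d|_{V^3}=2$ immediately. For $dx_3=0$, pass to the associated pure model $(\Lambda V,d_\sigma)$, which is elliptic iff $(\Lambda V,d)$ is; there $d_\sigma y_1=dy_1$, $d_\sigma y_2=dy_2$ and $d_\sigma y_3$ must form a regular sequence in $\Q[x_1,x_2,x_3]$, so $dy_1,dy_2$ are coprime in $\Q[x_1,x_2]$, the map $d\colon V^2\cdot V^3\to\Lambda^3V^2$ is injective, and $d^2x_3=0$ forces $dx_3=0$. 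The paper handles this point explicitly (``$da=0$, since there are no closed elements in $(\Lambda V)^5$'') before turning to the degree-$7$ generator. With this step inserted, your normalization of $dy_3$ via the Artinian Gorenstein quotient $S=\Q[x_1,x_2]/(dy_1,dy_2)$ goes through and is a tidy alternative to the paper's direct identification of $dz$ with $a^2$.
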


\begin{proof}
Let $(\Lambda V,d)$ be the minimal model of such an 8--manifold.  Then $\dim V^2= \dim V^3=2$, $\dim V^4=\dim V^7=1$ and $\dim V^k=0$ else. Then $d(V^2)=\{0\}$ and $d(V^3)\subset \Lambda^2 V^2$, because of the minimality of the model. 

Suppose $\rank d|_{V^3}\neq 2$. If $\rank d|_{V^3}=1$, let $0\neq y\in V^3$ with $dy=0$. Let $0\neq a\in V^4$. Then $da= y v$ for some $v \in V^2$, so $d(y a)=0$. But $ya\in (\Lambda V)^7 $ is not exact, since $d((\Lambda V)^6)\subset \Lambda^2 V^2 \cdot V^3$. So we have  $\coh^7(\Lambda V,d))\neq \{0\}$, a contradiction. If $\rank d|_{V^3}=0$, then \[\dim \Kern d|_{(\Lambda V)^{10}}\geq\dim(\Lambda^5 V^2\oplus(\Lambda^2 V^2) \cdot (\Lambda^2 V^3))=9\] and \[\rank(d|_{(\Lambda V)^9})\leq \dim( V^2 \cdot V^3 \cdot V^4\oplus V^2 \cdot V^7)=6,\] so $\coh^{10}(\Lambda V,d)\neq\{0\}$, a contradiction. 

Therefore $\rank d|_{V^3}= 2$, so we can choose bases $x_1,x_2$ of $V^2$ and $y_1,y_2$ of $V^3$ such that $dy_1=x_1^2-\tilde{\sigma} x_2^2$ for some $\tilde{\sigma} \in \Q$ and $dy_2=x_1 x_2$. Furthermore let $0\neq a \in V^4$. Then $da=0$, since there are no closed elements in $(\Lambda V)^5$. 

Suppose now that $\tilde{\sigma}=0$. Then $x_2^n$ or $a^n$ is closed, but not exact for every $n$, a contradiction. So $\tilde{\sigma}\neq 0$. Now the only non-exact, closed elements of $(\Lambda V)^8$ are multiples of $a^2$, so up to isomorphism, a generator $z\in V^7$ satisfies $dz=a^2$, which gives the minimal model of $X_\sigma\times \Sph^4$ for $\sigma=[\tilde{\sigma}]$. 

Since their cohomology rings are pairwise non-isomorphic, the $X_\sigma \times \Sph^4$, $\sigma \in \Q^*/(\Q^*)^2$,  have different homotopy types. Since their intersection form is given by $x^2-y^2$, they can be realized as a manifold by Sullivan's realization result, see Section~\ref{subsubsec:RealizationManifold}.
\end{proof}


\subsection{Dimension 9}

Again we compute the possible exponents of closed, simply connected, rationally elliptic 9--manifolds using the results of Friedlander and Halperin mentioned in Section \ref{susubsec:Exponents} and show that in seven of the nine cases there are only finitely many rational homotopy types with the given exponents.
\begin{lem}\label{Lemma9dimensionalExponents}
A closed, simply connected, rationally elliptic 9--manifold has one of the following exponents:
\begin{multicols}{2}
\begin{enumerate}
\item[(9.1)]$a=()$, $b=(5)$
\item[(9.2)]$a=()$, $b=(2,2,2)$ 
\item[(9.3)]$a=(1)$, $b=(2,4)$ 
\item[(9.4)]$a=(1) $, $b=(3,3)$
\item[(9.5)]$a=(2)$, $b=(3,4)$ 
\item[(9.6)]$a=(3)$, $b=(2,6)$ 
\item[(9.7)]$a=(1,1)$, $b=(2,2,3)$ 
\item[(9.8)] $a=(1,2)$, $b=(2,2,4)$ 
\item[(9.9)] $a=(1,1,1)$, $b=(2,2,2,2)$ 
\end{enumerate}
\end{multicols}
\end{lem}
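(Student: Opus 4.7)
The plan is to mirror the exponent enumerations of Lemmas~\ref{Lemma6dimensionalExponents}, \ref{Lemma7dimensionalExponents} and \ref{Lemma8dimensionalExponents}: list all tuples $(a, b)$ satisfying the four numerical constraints (a)--(d) at the end of Section~\ref{susubsec:Exponents} for $n = 9$, and then discard those violating the Strong Arithmetic Condition (SAC).

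First I would exploit parity: since $n = 9$ is odd while $2\bigl(\sum b_j - \sum a_i\bigr)$ is even, equation~(d) forces $r - q$ to be odd, so in particular $r > q$. Constraint~(b) gives $\sum a_i \leq 4$, and combining $b_j \geq 2$ with~(d) forces $r$ into a small range for each fixed $(q, (a_1, \ldots, a_q))$. So I would split into cases indexed by $q \in \{0, 1, 2, 3\}$, noting that $q = 4$ is excluded by these bounds, and for each multiset $(a_i)$ compute $\sum b_j$ via~(d) and then list all candidate $b$-multisets with entries $\geq 2$ compatible with~(c).

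Next I would check SAC on each candidate. The eliminations all come from the singleton subsets of $\vec{a}$: $a = (4)$ would require some $b_j \in \{8, 16, \ldots\}$, which is ruled out by the bound on $\sum b_j$; $a = (1, 3)$ requires some $b_j \in \{6, 9, \ldots\}$; $a = (2, 2)$ requires two distinct $b_j \in \{4, 6, 8, \ldots\}$; and $a = (1, 1, 2)$ with its forced $b = (2, 2, 2, 3)$ has no entry of the form $2k$ with $k \geq 2$. For the nine remaining tuples, SAC can be verified by exhibiting, for each subset of $(a_i)$, the required $b_{j_k}$ as an explicit non-negative integer combination of the chosen $a_i$'s with total weight $\geq 2$; this is automatic when all $a_i = 1$, and otherwise follows directly from the visible multiples of $a_i$ in $b$ (e.g.\ in case (9.5), $b_2 = 4 = 2 a_1$ handles $\{a_1\}$; in case (9.8), $b_3 = 4 = 2 a_2$ handles $\{a_2\}$).

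The main obstacle is not depth but bookkeeping: making sure that no $(q, \vec{a}, r)$-combination is overlooked. Since $\sum a_i \leq 4$ and the upper bound on $r$ coming from $b_j \geq 2$ combined with~(d) is tight, the enumeration terminates after a manageable number of cases and yields precisely the nine tuples listed.
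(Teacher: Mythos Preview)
Your approach is exactly what the paper does (indeed, the paper says nothing more than ``compute the possible exponents using the results of Friedlander and Halperin mentioned in Section~\ref{susubsec:Exponents}''), and the enumeration and SAC checks you outline are correct. One tiny expositional slip: for $a=(2,2)$ the elimination of the candidate $b=(2,3,4)$ actually uses the $s=2$ case of SAC rather than a singleton, though the condition you state there (``two distinct $b_j\in\{4,6,\ldots\}$'') is precisely that $s=2$ condition, so the argument itself is fine.
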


\begin{prop}\label{ProprationaleindeutigDim9}
In cases (9.1)---(9.6) and (9.8) of Lemma~\ref{Lemma9dimensionalExponents} there are only finitely many rational homotopy types of closed, simply connected 9--manifolds with these exponents. They are:
\begin{multicols}{2}
\begin{enumerate}
\item[(9.1)]$\Sph^9$
\item[(9.2)]$\Sph^3 \times \Sph^3 \times \Sph^3$
\item[(9.3)]$\Sph^2 \times \Sph^7$, $\Sph^3 \times \CP^3$
\item[(9.4)]$\Sph^5 \times \CP^2$
\item[(9.5)]$\Sph^4 \times \Sph^5$
\item[(9.6)]$\Sph^3 \times \Sph^6$
\item[(9.8)] $\Sph^2 \times \Sph^3 \times \Sph^4$ 
\end{enumerate}
\end{multicols}
\end{prop}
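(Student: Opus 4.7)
The unified approach for all seven cases is to write down a minimal Sullivan algebra $(\Lambda V, d)$ with $V^{\mathrm{even}}$ and $V^{\mathrm{odd}}$ of the dimensions and degrees prescribed by the exponents, and to classify the allowed differentials up to isomorphism. Minimality forces $dx_i = 0$ on each even generator of smallest degree and constrains $dy_j$ to $\Lambda^{\geq 2} V \cap (\Lambda V)^{|y_j|+1}$; together with $d^2 = 0$ and finiteness of cohomology (required for a closed manifold, see Section~\ref{subsubsec:RealizationManifold}) this pins down the model. In each case the resulting Sullivan algebra is recognized as the minimal model of the asserted product.

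In cases (9.1), (9.2), (9.5) and (9.6) the target space of $d$ on each odd generator is at most one-dimensional by degree reasons, so after rescaling only the zero differential and a single nonzero option remain on each generator; cohomological finiteness determines which occurs and yields $\Sph^9$, $\Sph^3\times\Sph^3\times\Sph^3$, $\Sph^4\times\Sph^5$ and $\Sph^3\times\Sph^6$. Case (9.4) has two odd generators in degree 5 with $dy_j \in \Q x^3$; a basis change in $V^5$ normalizes to $dy_1 = x^3$, $dy_2 = 0$, giving $\CP^2\times\Sph^5$. Case (9.3) is the first bifurcating one: with $x\in V^2$, $y_1\in V^3$, $y_2\in V^7$ one writes $dy_1 = \alpha\, x^2$ and $dy_2 = \beta\, x^4$, and the substitution $y_2 \mapsto y_2 - \beta\, x^2 y_1$ (valid when $\alpha \neq 0$) shows the isomorphism class depends only on whether $\alpha$ vanishes; the two alternatives produce $\Sph^2\times\Sph^7$ and $\CP^3\times\Sph^3$.

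The main obstacle will be case (9.8), where $V$ has generators $x_1, x_2$ (degrees 2 and 4) and $y_1, y_2, y_3$ (degrees 3, 3, 7). The plan is: observe first that minimality forbids $x_2$ from appearing in $dy_i$ for $i=1,2$, so both $dy_1$ and $dy_2$ lie in $\Q x_1^2$, forcing $\rank d|_{V^3} \leq 1$; rank zero is excluded by finite cohomology, and a basis change of $V^3$ normalizes to $dy_1 = x_1^2$, $dy_2 = 0$. For $y_3$ in degree 7 the candidate differential is
\[ dy_3 = a\, x_1^4 + b\, x_1^2 x_2 + c\, x_2^2 + e\, y_1 y_2 x_1,\]
and $d^2 y_3 = 0$ reduces to $e\, x_1^3 y_2 = 0$, forcing $e = 0$. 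The relations $d(y_1 x_1^2) = x_1^4$ and $d(y_1 x_2) = x_1^2 x_2$ then allow us to absorb the $a$ and $b$ terms by adjusting $y_3$ within $(\Lambda V)^7$, leaving $dy_3 = c\, x_2^2$; cohomological finiteness forces $c \neq 0$ and rescaling yields the minimal model of $\Sph^2 \times \Sph^3 \times \Sph^4$.
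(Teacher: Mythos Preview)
Your overall strategy is exactly what the paper (which gives no details here) has in mind, and cases (9.1)--(9.6) are handled correctly.  In case (9.8), however, there is a genuine omission.

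You carefully say that minimality forces $dx_i=0$ only for even generators \emph{of smallest degree}, yet in (9.8) the generator $x_2$ sits in degree $4$ while $V^3\neq 0$, so
\[
(\Lambda^{\geq 2}V)^5 \;=\; V^2\cdot V^3 \;=\; \Q\,x_1y_1 \oplus \Q\,x_1y_2
\]
is two--dimensional and a priori $dx_2=\gamma_1 x_1y_1+\gamma_2 x_1y_2$ is permitted; you never discuss this.  After your normalization $dy_1=x_1^2$, $dy_2=0$, the relation $d^2x_2=0$ kills $\gamma_1$, but $dx_2=\beta\,x_1y_2$ with $\beta\neq 0$ is still a legal minimal Sullivan differential.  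Under that hypothesis, $d^2y_3=0$ forces the coefficient $c$ of $x_2^2$ in $dy_3$ to vanish, and the remaining terms of $dy_3$ are exact (via $x_1^2y_1$ and $x_2y_1$), so one may take $dy_3=0$.  Then $x_2^{\,2}y_2$ is closed but not exact in degree $11$, so the cohomology has formal dimension $>9$ and the case is excluded---but this step is not automatic and your write-up skips it.  (The paper performs precisely this kind of check for the degree-$4$ generator in the parallel case (8.12).)  Once $dx_2=0$ is established, the rest of your argument for (9.8) goes through.

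A small slip in (9.3): unless you have already rescaled $y_1$ so that $\alpha=1$, the substitution should be $y_2\mapsto y_2-\tfrac{\beta}{\alpha}\,x^2y_1$.
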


Let $E=\gamma\oplus\varepsilon$ be the complex rank 2 vector bundle over $\CP^3 \# \CP^3$ which is obtained as the sum of a trivial line bundle $\varepsilon$ and the line bundle $\gamma$ with first Chern class $-(x_1+x_2)$ for generators $x_1$, $x_2$ of $\coh^2(\CP^3\#\CP^3)$ coming from the two $\CP^3$ summands. Let $M^8=P(E)$ be the projectified bundle. By the Leray-Hirsch theorem, the cohomology ring of $M^8$ is given by
\[\coh^*(M^8;\Q)\isom \Q[x_1,x_2,y]/(x_1x_2,x_1^3-x_2^3,y^2-x_1 y- x_2 y),\]
where $y$ is of degree 2.

Let $N^9$ be the principal circle bundle over $M^8$ with first Chern  class given by $y - 2 x_1$. Using the Serre spectral sequence, we can compute the cohomology ring of $N^9$. We get that  $\coh^{\leq4}(N^9;\Q)$ is generated by $x_1$ and $x_2$ with relations $x_1x_2=0=x_1^2$. 

From the construction it is clear that $N^9$ is rationally elliptic. Since the second Betti number $\betti_2(N^9)=2$, by Lemma~\ref{Lemma9dimensionalExponents} the exponents of $N^9$ are like in case (9.7).

\begin{samepage}
\begin{prop}
A closed, simply connected 9--manifold with exponents like in (9.7) of Lemma~\ref{Lemma9dimensionalExponents} has the rational homotopy type of $N^9$, $X_\sigma \times \Sph^5$ (see Remark~\ref{RemarkDefinitionXsigma}) for some $\sigma \in  \Q^* / (\Q^*)^2$ or $M^6 \times \Sph^3$ for a closed, simply connected, rationally elliptic 6--manifold $M^6$ with $\betti_2(M^6)=2$.
\end{prop}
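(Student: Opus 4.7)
The plan is to analyze the minimal Sullivan model $(\Lambda V,d)$ forced by the exponents in~(9.7): $V^2=\Q\langle x_1,x_2\rangle$, $V^3=\Q\langle y_1,y_2\rangle$, $V^5=\Q\langle z\rangle$, and all other $V^k=0$. Minimality gives $dx_i=0$ and $dy_j\in \Q[x_1,x_2]_2$, while $dz$ is a cocycle in $(\Lambda V)^6=\Q[x_1,x_2]_3\oplus \Q\cdot y_1 y_2$. The proof then proceeds by case analysis on $r:=\rank d|_{V^3}\in\{0,1,2\}$.

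The case $r=0$ is ruled out by an infinite-cohomology argument. If $dy_1=dy_2=0$, the differential vanishes on $V'=V^2\oplus V^3$ and $\Lambda V'=\Q[x_1,x_2]\tensor \Lambda(y_1,y_2)$ has Krull dimension 2; writing $\Lambda V=\Lambda V'\oplus \Lambda V'\cdot z$ one sees that $\coh^*(\Lambda V,d)$ contains $\Lambda V'/(dz)$, which---being a quotient of a 2-dimensional ring by a single element---is infinite-dimensional, contradicting ellipticity. For $r=1$ I choose a basis with $dy_1\neq 0$ and $dy_2=0$. Using $d(y_1 y_2)=(dy_1)y_2\neq 0$, closed degree-6 elements of the sub-algebra on $V^2\oplus V^3$ lie entirely in $\Q[x_1,x_2]_3$, so $dz$ contains no $y_2$-term and the full model splits as a cdga tensor product $(\Lambda(x_1,x_2,y_1,z),d)\tensor (\Lambda(y_2),0)$. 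The first factor is a rationally elliptic minimal Sullivan algebra of formal dimension 6 with $\dim V^2=2$; by Theorem~\ref{TheoremDimension6Rational}(d) together with Sullivan's realization result (Section~\ref{subsubsec:RealizationManifold}), it is the minimal model of a closed, simply connected, rationally elliptic 6--manifold $M^6$ with $\betti_2(M^6)=2$, yielding the $M^6\times \Sph^3$ case.

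For $r=2$, both $dy_1,dy_2$ are linearly independent quadratics, and I subdivide according to whether they share a common zero in $\C^2\setminus\{0\}$. If not, Hilbert's Nullstellensatz makes $\Q[x_1,x_2]/(dy_1,dy_2)$ finite-dimensional, so the sub-algebra on $V^2\oplus V^3$ is a pure elliptic Sullivan algebra of formal dimension 4. Diagonalizing the bilinear form $\coh^2\tensor \coh^2 \to \coh^4\isom \Q$ over $\Q$ and arguing as in the proof of Theorem~\ref{TheoremDimension7Rational}, the sub-algebra is brought into the form $(\Lambda V',d_{\tilde\sigma})$ from Section~\ref{sec:DimensionSeven} for some $\sigma=[\tilde\sigma]\in \Q^*/(\Q^*)^2$. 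Since $\coh^6$ of a Sullivan algebra of formal dimension 4 vanishes, $dz$ is exact in the sub-model and can be removed by a decomposable modification of $z$; the full model then splits as the minimal model of $X_\sigma\times \Sph^5$. If $dy_1,dy_2$ do share a common zero, linear independence together with Galois invariance forces the common linear factor to be defined over $\Q$ (a Galois-conjugate pair of linear factors would, together, already span both quadratics, contradicting linear independence). Normalizing coordinates so this factor is $x_1$ and adjusting the $y$-basis, I reach $dy_1=x_1^2$, $dy_2=x_1 x_2$; computing the $d(x_i y_j)$ shows the exact degree-6 elements of the sub-algebra span $\Q\langle x_1^3,x_1^2x_2,x_1 x_2^2\rangle$, so $\coh^6_{\mathrm{sub}}=\Q\langle [x_2^3]\rangle$. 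Ellipticity forces $[dz]$ to be a nonzero multiple of $[x_2^3]$ (otherwise $x_2^n$ would yield infinitely many classes), and after rescaling $z$ and subtracting a decomposable element one arrives at $dz=x_2^3$. The resulting model has $\coh^3=0$ and $x_1^2=x_1 x_2=0$ in $\coh^4$, matching the low-degree cohomology of $N^9$ recorded before the proposition; since $N^9$ is itself rationally elliptic of exponent type~(9.7) with these invariants, its minimal model lies in this same sub-case, and by the classification just carried out is isomorphic to the one we constructed.

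The main obstacle is the $r=2$ analysis. In sub-case (A) one must carry out the normalization of the pencil $\Q\langle dy_1,dy_2\rangle$ to canonical $X_\sigma$ form over $\Q$; this reduces, via the bilinear form on $\coh^2$, to the standard diagonalization of a symmetric bilinear form on $\Q^2$, and then to the vanishing of $\coh^6$ in formal dimension 4 to kill $dz$. In sub-case (B) the delicate points are establishing that the common linear factor descends to $\Q$ and that the unique resulting normalized model genuinely realizes $N^9$; both follow once the low-degree cohomology of $N^9$ from the excerpt is invoked and uniqueness of minimal models is applied.
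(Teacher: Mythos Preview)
Your proof is correct and follows essentially the same strategy as the paper: a case split on $r=\rank d|_{V^3}$, with the $r=2$ case further divided according to whether the induced pairing $\coh^2\times\coh^2\to\coh^4$ is nondegenerate. The paper phrases that last dichotomy as ``$a\neq 0$ versus $a=0$'' after diagonalizing, whereas you phrase it as ``no common zero versus common zero'' and invoke the Nullstellensatz and a Galois argument; these are equivalent, and your treatment is somewhat more explicit---in particular you separately dispose of $r=0$ and justify why $dz$ carries no $y_2$-term when $r=1$, points the paper passes over quickly.
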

\end{samepage}

\begin{proof}
Let  $(\Lambda V,d)$ be the minimal model of such a 9--manifold $M$. In particular, $\dim V^2=\dim V^3=2$ and $\dim V^5=1$.  If $\ker(d|_{V^3})\neq\{0\}$, then $M\simeq_\Q X\times \Sph^3$, where $X$ is of formal dimension 6. Since $X$ is rationally elliptic, $X\simeq_\Q M^6$, with $M^6$ like in the statement of the proposition.

If $\ker(d|_{V^3})=\{0\}$, then $\dim\coh^4(\Lambda V,d)=1$. We can then choose bases $x_1$, $x_2$ of $V^2$ and $y_1$, $y_2$ of $V^3$ such that $dy_1=x_1 x_2$ and $dy_2= x_1^2+ a x_2^2$ for some $a\in \Q$. If $a \neq 0$, then $(\Lambda V, d)$ is isomorphic to the minimal model of $X_\sigma \times \Sph^5$ with $\sigma$ the equivalence class of $a$ in $\Q/(\Q^*)^2$.

Suppose now $a=0$. Then, up to isomorphism, we can choose $0\neq z \in V^5$ with $d z = x_2^3$. Therefore $\coh^{\leq 4}(\Lambda V,d) \isom \coh^{\leq 4}(N^9)$. Since $N^9$ has the right exponents and the cohomology ring of $N^9$ is non-isomorphic to all of the previously calculated, $(\Lambda V,d)$ is the minimal model of $N^9$. 
\end{proof}

In the remaining case (9.9) of Lemma~\ref{Lemma9dimensionalExponents} there are products $M_\sigma \times \Sph^2$ and $N^7 \times \Sph^2$ of seven-dimensional manifolds with $\Sph^2$ and products of $\Sph^3$ with closed, simply connected, rationally elliptic 6--manifolds with $\betti_2=3$. But there are also examples not having the rational homotopy type of a product.

As an example of such a manifold consider the principal $\Sph^1$-bundle $Y$ over $\Sph^2\times\Sph^2\times\Sph^2\times\Sph^2$ with first Chern class $c_1(Y)=x_1+x_2+x_3+x_4$, where the $x_i$ are generators of the integral cohomology rings of the $\Sph^2$ factors. Using the Serre spectral sequence one can compute the cohomology ring of $Y$. In particular,  $\coh^2(Y;\Q)$ is generated by $[x_1]$, $[x_2]$ and $[x_3]$. The products of these generate $\coh^4(Y;\Q)$ subject to relations $[x_i]^2=0=[x_1][x_2]+[x_1][x_3]+[x_2][x_3]$. Now suppose $Y$ is rationally homotopy equivalent to a product. Due to the classification in dimensions 5 and below, it then has the rational homotopy type of  a product with $\Sph^2$, $\Sph^3 $ or $\Sph^5$. A product with $\Sph^5$ is not possible, since $\betti_2(Y)=3$ and $\betti_2(X)\leq 2$ for a simply connected, rationally elliptic space $X$ of formal dimension 4. As $\betti_3(Y)=0$, we can also exclude a product with $\Sph^3$. By our classification in dimension 7, the last case is that of a product $M_\sigma \times \Sph^2$ or $N^7 \times \Sph^2$. To exclude this, consider the set of elements of the respective second complex cohomology group with vanishing square. For $M_\sigma^7\times \Sph^2$ this is the union of three one-dimensional subspaces, for $N^7\times \Sph^2$ it is the union of a one and a two-dimensional subspace, while for $Y$ it is the union of the four one-dimensional subspaces generated by $[x_1]$, $[x_2]$, $[x_3]$ and $[x_1]+[x_2]+[x_3]$, respectively.

The same argument holds for the family of 9-dimensional biquotients considered by Totaro \cite{Tot}, giving rise to infinitely many rational homotopy types of simply connected, rationally elliptic  9-manifolds with exponents like in (9.9), that do not have the rational homotopy type of a product.

\bibliographystyle{alpha}
\bibliography{refs}{}
\end{document}